\documentclass[reqno]{amsart}
\usepackage[hmarginratio=1:1, vmarginratio=1:1]{geometry}
\usepackage{pdfsync}
\usepackage[dvipsnames]{xcolor}
\usepackage{amsmath}
\usepackage{amsfonts}
\usepackage{amssymb}

\usepackage{amsthm}
\usepackage{mathrsfs}
\usepackage{hyperref}
\hypersetup{
    colorlinks=true,
    linkcolor=blue,
    citecolor=red,
}
\usepackage{theoremref}

\usepackage{tikz}
\usetikzlibrary{shapes,arrows,cd}
\usetikzlibrary{calc,arrows.meta,positioning}
\usetikzlibrary{decorations.pathreplacing}
\usetikzlibrary{decorations.markings}
\usepackage{float}

\usepackage[utf8]{inputenc}
\usepackage{array}
\usepackage[inline]{enumitem}
\usepackage{verbatim}
\usepackage{comment}

\theoremstyle{plain}
\newtheorem{lem}{Lemma}[section]
\newtheorem{thm}[lem]{Theorem}
\newtheorem{cor}[lem]{Corollary}
\newtheorem{claim}{Claim}[lem]

\newtheorem{prop}[lem]{Proposition}
\newtheorem*{thm*}{Theorem}
\newtheorem{thmAB}{Theorem}

\theoremstyle{definition}
\newtheorem{defn}[lem]{Definition}
\newtheorem{rmk}[lem]{Remark}
\newtheorem{ex}[lem]{Example}
\newtheorem{question}[lem]{Question}

\makeatletter
 \newcommand{\myhyper}[1]{\Hy@raisedlink{\hypertarget{#1}{}}}
\makeatother

\DeclareMathOperator{\inv}{inv}
\DeclareMathOperator{\PIG}{PIG}
\DeclareMathOperator{\FI}{FI}
\DeclareMathOperator{\FIM}{FIM}
\DeclareMathOperator{\id}{id}

\let\P\relax
\DeclareMathOperator{\P}{\textup{\textsf{P}}} 
\DeclareMathOperator{\N}{\textup{\textsf{N}}} 
\DeclareMathOperator{\E}{\textup{\textsf{E}}}
\DeclareMathOperator{\PE}{\textup{\textsf{PE}}}
\DeclareMathOperator{\NE}{\textup{\textsf{NE}}}

\DeclareMathOperator{\B}{\textup{\textbf{B}}}

\DeclareMathOperator{\W}{\textup{\textsf{W}}}

\DeclareMathOperator{\V}{\textup{\textsf{V}}}

\newenvironment{thmenumerate}{\begin{enumerate}[topsep=0mm, leftmargin=14mm,itemsep=1mm,label=\textup{(\roman*)}]
}{\end{enumerate}}

\newenvironment{bulletmidsentence}{\begin{itemize}[topsep=0mm, leftmargin=8mm, itemsep=1mm]
}{\end{itemize}}

\newenvironment{rmkenumerate}{\begin{enumerate}[align=left, leftmargin = 0mm, listparindent=0mm, labelwidth=2mm, itemindent=!, itemsep=1.5mm, labelsep=3mm, label=\textup{(\arabic*)}]
}{\end{enumerate}}

\numberwithin{equation}{section}

\begin{document}

\title[Howson property and finitely generated intersection problem]{Howson property and finitely generated intersection problem for monogenic inverse semigroups}
	
\author{Jung Won Cho}
	
\address{School of Mathematics and Statistics, University of St Andrews, St Andrews, Fife KY16 9SS, UK.}
\email{jwc21@st-andrews.ac.uk}
	
\author{Craig Miller}
\address{Department of Computer Science, Durham University, Stockton Road, Durham, DH1 3LE, UK}
\email{craig.a.miller@durham.ac.uk}

\author{Nik Ru\v{s}kuc}
\address{School of Mathematics and Statistics, University of St Andrews, St Andrews, Fife KY16 9SS, UK.}
\email{nik.ruskuc@st-andrews.ac.uk}
	
\subjclass[2020]{20M05, 20M18, 68Q70}
\keywords{Monogenic inverse semigroup, free inverse semigroup, Howson property, finitely generated, intersection, Cut and Paste Lemma, periodicity, algorithmic decidability}


\begin{abstract}
An algebraic structure is said to have the Howson property if the intersection of any two finitely generated subalgebras is finitely generated. We explore the Howson property in the context of monogenic inverse semigroups. It is known, due to work of Jones and Trotter (1989) and Jones (2016), that every monogenic inverse semigroup has the Howson property considered as an inverse semigroup, i.e.\ with respect to its inverse subsemigroups. In this paper, we consider monogenic inverse semigroups qua semigroups, i.e.\ we consider \emph{all} their subsemigroups. We prove that every monogenic inverse semigroup possesses the Howson property in this broader sense, with the sole exception of the monogenic free inverse semigroup. For this exceptional case, we show that the problem of determining whether the intersection of two finitely generated subsemigroups is finitely generated is algorithmically decidable.
\end{abstract}

\maketitle

\section{Introduction}
\label{sec:Intro}

The starting point for this paper is the following question regarding a given monogenic inverse semigroup $S$: \emph{Is the intersection of two finitely generated subsemigroups of $S$ always finitely generated?} For those $S$ where the answer is negative, we proceed to ask: \emph{Is it algorithmically decidable whether such an intersection is finitely generated?}

An algebraic structure is said to have the \emph{Howson property} if the intersection of any two finitely generated subalgebras is again finitely generated. This property is named after A. G. Howson, who proved that the intersection of any two finitely generated subgroups of a free group is finitely generated \cite{Howson}. This theorem motivated further investigation into the rank of such intersections, leading to the \emph{Hanna Neumann conjecture}, posed in \cite{Neumann} and later proved independently by J.\ Friedman \cite{Friedman} and I. Mineyev \cite{Mineyev}. Further examples of groups with the Howson property include: 
groups whose finitely generated subgroups are Noetherian, for example finite, abelian, nilpotent and virtually polycyclic groups;
limit groups \cite{Dahmani03}; fundamental groups of compact surfaces \cite{Greenberg60};
free products of groups with the Howson property \cite{Baumslag}.

Slightly surprisingly, the Howson property for semigroups has received little attention. However, it is known that the monogenic free semigroup $\mathbb{N}$ has the property (for trivial reasons), but that free semigroups of higher rank do not, see for example \cite{BlattnerHead}.

On the other hand, the Howson property has received considerable attention in the context of inverse semigroups.
An \emph{inverse semigroup} $S$ is a semigroup in which for every $s\in S$ there is a unique $s^{-1}\in S$ such that $ss^{-1}s = s$ and $s^{-1}ss^{-1}=s^{-1}$. It can also be viewed as an algebra with an associative binary operation and a unary operation $^{-1}$ satisfying the following laws: for all~$x,y\in S$,
\[
(x^{-1})^{-1} = x, \quad\; (xy)^{-1} = y^{-1}x^{-1},\quad\; xx^{-1}x = x,\quad\; xx^{-1}yy^{-1} = yy^{-1}xx^{-1}.
\]
An inverse semigroup can be regarded as a (plain) semigroup by disregarding the unary operation. The meaning of \emph{subsemigroups} changes between those two viewpoints: an \emph{inverse} subsemigroup of $S$ is closed under multiplication and inverses, whereas a \emph{plain} subsemigroup need only be closed under multiplication. Of course, every inverse subsemigroup is a plain subsemigroup as well, but the converse is not true.

This in turn gives rise to two different Howson properties for an inverse semigroup $S$. We say that $S$ has:
\begin{bulletmidsentence}
\item the \emph{inverse semigroup Howson property} if the intersection of any two finitely generated inverse subsemigroups of $S$ is finitely generated;
\item the \emph{semigroup Howson property} if the intersection of any two finitely generated plain subsemigroups of $S$ is finitely generated.
\end{bulletmidsentence}

We note that the intersection of two subsemigroups of $S$ may be empty. Throughout this paper, we regard the empty set as a finitely generated subsemigroup.

In the context of the inverse semigroup Howson property, the following result of Jones and Trotter \cite{JonesTrotter} is of particular significance for us:

\begin{thm}[\cite{JonesTrotter}]
\label{thm:JonesTrotter}
The monogenic free inverse semigroup has the inverse semigroup Howson property. \hfill\qed
\end{thm}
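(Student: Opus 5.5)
The monogenic free inverse semigroup $\FI_1$ has a concrete model via Munn trees, and I would base the proof on it. Each element is a triple $(m,n,k)$ of integers with $m \le 0 \le n$ and $m \le k \le n$. Here $[m,n]$ is the interval of $\mathbb{Z}$ traversed by the word, and $k$ is its endpoint. Multiplication translates the second interval by $k$, takes the union, and adds endpoints. Inversion sends $(m,n,k)$ to $(m-k,n-k,-k)$.

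\textbf{Step 1: describe finitely generated inverse subsemigroups.} I would first show that a finitely generated inverse subsemigroup $T$ consists of two parts.
\begin{itemize}
\item A non-idempotent part. If $T$ contains an element with $k\neq 0$, then the endpoints of elements of $T$ form a subgroup $d\mathbb{Z}$. The elements with large $|k|$ can be written as triples determined by an initial segment, a periodic middle, and a final segment drawn from finitely many types.
\item An idempotent part. The idempotents $(m,n,0)$ of $T$ form a subsemilattice of $\{(m,n,0)\}\cong \mathbb{N}\times\mathbb{N}$ under $\max$ in each coordinate.
\end{itemize}
Concretely, I expect the following. A finitely generated inverse subsemigroup is either finite, or it is determined by a finite set of ``boundary'' data together with a translation period $d$. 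Membership of $(m,n,k)$ is then decided by conditions of the form: $k\in d\mathbb{Z}$, $(m \bmod d, n \bmod d)$ lies in a finite allowed set, and $m,n$ exceed computable thresholds relative to $k$. In other words, $T$ is a semilinear-type subset of $\mathbb{Z}^3$ with a specific eventually periodic shape.

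\textbf{Step 2: intersect and find generators.} Given two such subsemigroups $T_1,T_2$, their intersection $T_1\cap T_2$ is again described by combined periodicity with period $\mathrm{lcm}(d_1,d_2)$ and maximal thresholds. It is an inverse subsemigroup. I would then exhibit a finite generating set: all elements of $T_1\cap T_2$ with $|m|,|n|$ bounded by some constant $C$ depending on the periods and thresholds. To prove this set generates, take $(m,n,k)$ in the intersection with a large interval and factor it as a product $uv$ or $uvu'$. Each factor is an element of the intersection of smaller size, for example a step of length $\mathrm{lcm}(d_1,d_2)$ times the inverse of the remaining part. The factors are chosen using periodicity so that membership in both $T_i$ is preserved. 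Induction on $n-m$ then completes the argument.

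\textbf{Main obstacle.} The hard part is Step 1: establishing the precise eventually periodic normal form for a finitely generated inverse subsemigroup of $\FI_1$. The difficulty lies mostly in the idempotents, whose interval endpoints may be far from the endpoint $k$. I would attack it through closure under $x\mapsto xx^{-1}$ and $x\mapsto x^{-1}x$, and under products with generators. I would show that beyond a computable bound, adding one period to either end of the interval can be realized by multiplying by a fixed element of $T$. Once this ``extension by period'' property holds uniformly, both the intersection description and the finite generation argument in Step 2 follow routinely. The case where $T$ consists only of idempotents needs separate handling. There, finitely generated subsemilattices of $\mathbb{N}^2$ are finite, so the intersection is trivially finitely generated.
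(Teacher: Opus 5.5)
The paper does not prove this theorem; it quotes it from Jones and Trotter. It signals (before Proposition~\ref{prop:JonesTrotterPIG}) that their argument rests on the criterion that an inverse subsemigroup of $\FI_1$ is finitely generated if and only if it has finitely many purely idempotent-generated elements. Your route, an explicit eventually periodic normal form followed by direct factorisation, is different. As written, it has a genuine gap.

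Step~1 is only conjectured (``I expect''), and its stated shape is not quite right. Beyond the thresholds, the admissible pairs $(\lambda,\rho)$ in a residue class form an up-set of $\mathbb{N}_0^2$ with finitely many corners, plus finitely many exceptional elements.

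The more serious problem is the claim that Step~2 then follows routinely. No periodic description, however accurate, forces finite generation. For example, let $U=\E\cup\{(-a,x,b)\in\FI_1 : x\neq 0,\ a\ge 5,\ a+x\ge 5\}$.
\begin{itemize}
\item $U$ is an inverse subsemigroup containing non-idempotents, and it is described by exactly the kind of conditions you envisage (period $1$, thresholds depending on $x$). It even satisfies the conclusion of Lemma~\ref{lem:CutPaste} with $n=6$, $\nu=1$.
\item No non-idempotent of $U$ is $\mathscr{R}$-related to an idempotent $(-a,0,b)$ with $a\le 4$. So these infinitely many idempotents are purely idempotent-generated (Lemma~\ref{lem:EuniPIGGreen}), and $U$ is not finitely generated (Lemma~\ref{lem:subFI1PIG}).
\item Your induction on $n-m$ stalls exactly here. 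Any factorisation of $(0,0,b)$ in $U$ consists of idempotents, so one factor must be $(0,0,b)$ itself.
\end{itemize}
Example~\ref{ex:2sideIntersect} shows the same phenomenon for plain subsemigroups.

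What is missing is an argument that $T_1\cap T_2$ cannot behave like $U$. You need that all but finitely many of its idempotents have the form $ss^{-1}$ with $s\in T_1\cap T_2$ non-idempotent. Your ``extension by a fixed element of $T$'' does not give this, because the fixed element for $T_1$ need not lie in $T_2$.

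One way to bridge this uses the fact that elements of $\FI_1$ are determined by their triples. Assume both $T_i$ contain non-idempotents, with $T_i\mu=d_i\mathbb{Z}$, $L=\mathrm{lcm}(d_1,d_2)$, and $\alpha_i,\beta_i\in T_i$ as in Proposition~\ref{prop:OliveiraSilva}. Let $f=(-a,0,c)\in E(T_1\cap T_2)$.
\begin{itemize}
\item If $a\ge a^\ast_{T_1},a^\ast_{T_2}$ and $c\ge b_{\alpha_1},b_{\alpha_2}$, then $f\alpha_1^{L/d_1}f=(-a,L,L+c)=f\alpha_2^{L/d_2}f$. This is a non-idempotent of $T_1\cap T_2$ that is $\mathscr{R}$-related to $(-a,0,L+c)$.
\item Dually, if $c\ge b^\ast_{T_1},b^\ast_{T_2}$ and $a\ge a_{\beta_1},a_{\beta_2}$, then $f\beta_1^{L/d_1}f=(-(a+L),-L,c)=f\beta_2^{L/d_2}f$.
\end{itemize}
The conditions $a\ge a^\ast_{T_i}$ and $c\ge b^\ast_{T_i}$ hold for all but finitely many idempotents precisely because $\PIG(T_i)$ is finite. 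Stepping down from a large idempotent of $T_1\cap T_2$ to a suitable $f$ uses periodicity of $E(T_i)$ in both directions (Lemma~\ref{lem:PeriodicSemilattice}). Together these give finiteness of $\PIG(T_1\cap T_2)$. Proposition~\ref{prop:JonesTrotterPIG}, or a factorisation argument like yours now that common small elements are available, then finishes the proof. None of these ingredients appears in your proposal, and without them Step~2 does not go through.
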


In contrast with free groups, free inverse semigroups of higher rank do not have the Howson property \cite{JonesTrotter}. Subsequently, Jones \cite{Jones} extended the above result to all monogenic inverse semigroups: 

\begin{thm}[\cite{Jones}]
\label{thm:Jones}
Every monogenic inverse semigroup has the inverse semigroup Howson property. \hfill\qed
\end{thm}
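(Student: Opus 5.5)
Every monogenic inverse semigroup $S=\langle a\rangle$ is a quotient of the monogenic free inverse semigroup $\FI_1$, so the plan is to split into two cases: $S\cong\FI_1$, which is exactly Theorem~\ref{thm:JonesTrotter}, and $S$ a proper quotient of $\FI_1$. For the proper quotients I will use the known classification: either $S$ is finite, or $S$ is infinite and $a^n$ eventually lies in a subgroup, or some relation $a^{m}(a^{-1})^{m}\cdots$ identifies idempotents so that the $\mathcal{D}$-classes above some level collapse to a bicyclic-type or cyclic-group-type tail. Finite $S$ is trivial, since every inverse subsemigroup is finitely generated. So the content lies in the infinite proper quotients, which I will handle by a Noetherian-type argument.

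The first key step is to show that in an infinite proper quotient $S$, the ideal $I$ of ``high'' elements (those $\mathcal{D}$-related to $a^k$ for large $k$) is a finite union of $\mathcal{D}$-classes that are either groups (cyclic $\mathbb{Z}$) or copies of a Brandt/bicyclic-like structure with only finitely many idempotents. Then $S\setminus I$ is finite. Here I will argue that every inverse subsemigroup $T$ of $S$ satisfies the ascending chain condition on inverse subsemigroups up to finite data. Concretely, $T$ is determined by the finite set $T\setminus I$ together with $T\cap I$, and $T\cap I$ is described by finitely many subgroups of $\mathbb{Z}$ (one for each idempotent in $I$) and the finitely many $\mathcal{H}$-class cosets that occur. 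Subgroups of $\mathbb{Z}$ are finitely generated, and this yields that \emph{every} inverse subsemigroup of $S$ is finitely generated. The Howson property then follows immediately, because the intersection of two inverse subsemigroups is again an inverse subsemigroup.

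The main obstacle is verifying that no infinite proper quotient has infinitely many idempotents in a non-Noetherian configuration. In particular I must rule out, or handle separately, quotients such as $\FI_1/(a^n=a^{n+1}(a^{-1})\dots)$ in which the semilattice might still be infinite. If such a quotient has an infinite semilattice of idempotents, I would instead emulate Jones--Trotter's argument for $\FI_1$. That argument represents elements by Munn-tree intervals $(i,j,k)$ with $i\le 0\le j$ and $i\le k\le j$, and shows that a finitely generated inverse subsemigroup is determined by a finite amount of eventually periodic data. That data is preserved under intersection, since periodic sets of integer triples intersect in periodic sets. I would then push this description through the quotient map, checking that images of eventually periodic sets remain eventually periodic.
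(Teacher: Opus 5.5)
\textbf{There is a genuine gap: your key step fails in case \ref{it:corclass3} of Theorem~\ref{thm:MonoClass1}, and the fallback does not repair it.}

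In that case the high ideal is $I\cong\B$, a single $\mathscr{D}$-class with infinitely many idempotents $(i,i)$. These idempotents form a chain, since $(i,i)(j,j)=(\max(i,j),\max(i,j))$. So \emph{every} set of idempotents of $\B$ is an inverse subsemigroup, and an infinite one is not finitely generated, because finitely many idempotents generate only finitely many elements. Hence the claim that every inverse subsemigroup of an infinite proper quotient is finitely generated is false. No Noetherian argument can work here; the Howson property must genuinely use that both factors are finitely generated. Your argument is fine in case \ref{it:corclass4}: a non-empty inverse subsemigroup of $\mathbb{Z}$ is a subgroup, and the finite complement is absorbed by Propositions~\ref{prop:FiniteGenerationLarge} and~\ref{prop:invplaingen}.

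The fallback fails for three reasons. First, eventual periodicity does not detect finite generation. The chain $C=\{(-c,0,0):c\geq 1\}$ is an inverse subsemigroup of $\FI_1$ that is periodic in the sense of Definition~\ref{defn:PeriodicSets} but not finitely generated. The actual Jones--Trotter criterion is finiteness of $\PIG$ (Proposition~\ref{prop:JonesTrotterPIG}), and $\PIG(C)=C$. Second, intersection does not commute with $\pi\colon\FI_1\to S$. Lifting generators of $U_1,U_2$ gives $V_1,V_2$ with $V_i\pi=U_i$, but $(V_1\cap V_2)\pi$ can be strictly smaller than $U_1\cap U_2$. Third, full preimages of finitely generated inverse subsemigroups need not be finitely generated. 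For the morphism $\FI_1\to\B$ determined by $(0,1,1)\mapsto(1,0)$ one computes $(-c,x,d)\mapsto(d,d-x)$, so the preimage of the trivial subsemigroup $\{(0,0)\}$ is exactly $C$.

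\textbf{What is missing} is structural information about finitely generated inverse subsemigroups of $\B$.
\begin{itemize}
\item An inverse subsemigroup containing a non-idempotent $(i,j)$ also contains $(j,i)$. It is therefore square, hence finitely generated by Proposition~\ref{prop:bifinite}.
\item A finitely generated semilattice is finite.
\item Suppose two finitely generated square subsemigroups share infinitely many idempotents. The $\mathbb{N}_0$-pieces in Proposition~\ref{prop:bisquare} each contain at most one idempotent, so two of their copies of $\B$ share some $(p,p)$. They then share a non-idempotent, exactly as in the proof of Corollary~\ref{cor:biHowson}.
\end{itemize}
Consequently the intersection of two finitely generated inverse subsemigroups of $\B$ is either finite or square, and so finitely generated. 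The finite extension is then handled as in Proposition~\ref{prop:HowsonFiniteComplement}.

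The paper gives no proof of its own here; it quotes Jones. Within the paper, however, the theorem follows quickly. The free case is Theorem~\ref{thm:JonesTrotter}. For non-free $S$, the semigroup Howson property of Theorem~\ref{thm:HonwsonNonFree} implies the inverse one. By Proposition~\ref{prop:invplaingen}, finitely generated inverse subsemigroups are finitely generated as semigroups. Their intersection is inverse, so it is finitely generated as an inverse semigroup as soon as it is finitely generated as a semigroup.
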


In the same paper, Jones \cite{Jones} also studied this property in the context of inverse semigroups with finitely many idempotents. Moreover, Silva and Soares \cite{SilvaSoares} investigated it for semidirect products of semilattices by groups.

Motivated by Theorem~\ref{thm:Jones}, a natural question arises as to whether monogenic inverse semigroups have the \emph{semigroup} Howson property. Our first main result provides a complete answer to this question.

\begin{thmAB}
\label{thmA:Howson}
A monogenic inverse semigroup has the semigroup Howson property if and only if it is not free.
\end{thmAB}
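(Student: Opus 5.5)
The plan is to prove the two directions separately. For the \emph{only if} direction we exhibit two explicit finitely generated subsemigroups of the monogenic free inverse semigroup whose intersection is not finitely generated. For the \emph{if} direction we use the structure theory of proper quotients of the monogenic free inverse semigroup and treat each class of non-free monogenic inverse semigroups in turn. For both directions I will work with the standard normal form for the monogenic free inverse semigroup $FI_1=\langle a\rangle$. Its elements are triples $(i,j,k)$ with $i,j\ge 0$, $i+j>0$ and $-i\le k\le j$. Here $[-i,j]$ is the interval visited by a path in $\mathbb{Z}$ starting at $0$ and ending at $k$, and multiplication is $(i,j,k)(i',j',k')=(\max(i,i'-k),\max(j,j'+k),k+k')$.

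For the free case, I would take a pair of subsemigroups such as $U=\langle a^2,a^{-1}\rangle$ and $V=\langle a^{-2},a\rangle$, and compute $U\cap V$ explicitly as a set of triples. In $U$ the left excursions are taken in unit steps and the right excursions in steps of $2$, with the reverse in $V$. So membership in $U\cap V$ should impose parity-type constraints on $i$, $j$ and $k$ that interact with the shape of the path. I would then look for an infinite family, for instance elements of the form $(n,n,0)$ or $(n,n+1,1)$ for suitable $n$, each of which lies in $U\cap V$ but is indecomposable there. That is, it cannot be written as a product of two elements of $U\cap V$, because any factorisation forces one factor to violate one of the constraints. If this particular pair does not work, I would adjust the generators (for instance use $\langle a^{2},a^{-1}a\,a^{-1}\rangle$-type generators) until the intersection visibly has infinitely many indecomposables. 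This already shows that the intersection is not finitely generated.

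For the non-free case, I would rely on the classical description of proper quotients of $FI_1$. Such a quotient $S$ either is finite, or has a minimal ideal which is a cyclic group with only finitely many elements outside it, or is a bicyclic-type quotient (for example the bicyclic monoid, generated as an inverse semigroup by one element). Finite $S$ is trivial. In the second case, a finitely generated subsemigroup $T$ meets the group ideal $G\cong\mathbb{Z}_n$ or $\mathbb{Z}$ in a set that is finitely generated relative to $T$. Subsemigroups of $\mathbb{Z}$ generated by finitely many elements are either subgroups or ultimately periodic one-sided sets, and finitely generated subsemigroups of $\mathbb{Z}$ are closed under intersection. Adding back the finitely many elements outside the ideal keeps everything finitely generated.

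For the bicyclic-type case, elements are pairs $(p,q)\in\mathbb{N}^2$, possibly with a finite adjunction. I would show that every finitely generated subsemigroup is a semilinear (indeed "eventually periodic in each region") subset of $\mathbb{N}^2$ admitting an effective description. The intersection of two such sets is again of this form, and a semilinear subsemigroup of this kind is finitely generated.

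The main obstacle is this last step. Semilinearity is preserved by intersection, but the set $U\cap V$ being semilinear does not by itself make it a \emph{finitely generated} subsemigroup, because the bicyclic multiplication is not additive. I would try first to decompose $\mathbb{N}^2$ into the regions $p<q$, $p=q$ and $p>q$, where multiplication acts additively on the off-diagonal coordinate. On each region I would then show that the intersection agrees, outside a bounded set, with a finitely generated additive structure. A pumping (cut-and-paste) argument would then let any long element be factorised using boundedly many generators.
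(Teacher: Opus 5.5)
Your ``only if'' direction contains no valid example, and the pair you propose provably fails. Since $(a^2)^{-1}=a^{-2}$ and $(a^{-1})^{-1}=a$, you have $V=\langle a^{-2},a\rangle=U^{-1}$. So $U\cap V=U\cap U^{-1}=\inv(U)$ is the maximal inverse subsemigroup of the finitely generated semigroup $U$, and this is always finitely generated (Lemma~\ref{lem:subFI1PIG}: by $E$-unitarity $\PIG(\inv(U))=\PIG(U)$, which is finite, so the Jones--Trotter criterion applies). Your fallback $\langle a^2,a^{-1}aa^{-1}\rangle$ is literally the same semigroup, because $a^{-1}aa^{-1}=a^{-1}$. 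The intersection is automatically finitely generated when both factors are inverse or one is one-sided (Proposition~\ref{prop:FI1fgintersectpairs}). So ``adjusting the generators until it works'' is exactly where the content of this direction lies: you need an explicit pair together with a checkable certificate of non-finite generation. The paper uses $S=\langle a^{-2},a^{2}\rangle=\langle(-2,-2,0),(0,2,2)\rangle$, which is inverse and isomorphic to $\FI_1$, and $T=\langle aa^{-3},a^{2}\rangle=\langle(-2,-2,1),(0,2,2)\rangle$. Here the paper's triple $(-c,x,d)$ has interval $[-c,d]$ and endpoint $x$. The right endpoint $u\rho$ is even for every $u\in S$, while an induction over prefix-minimal factorisations shows it is odd for every $u\in\NE(T)$; hence $S\cap T\subseteq\P$. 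On the other hand, $(-2k,2,2)=(a^{-2})^k(a^2)^{k+1}=(aa^{-3})^k(a^2)^{k+1}$ lies in $S\cap T\cap\P_{2k,0}$ for every $k\geq 1$. A finitely generated subsemigroup of $\P$ meets only finitely many $\P_{a,b}$ (Proposition~\ref{prop:ChoRuskuc}\ref{it:CR2}), so $S\cap T$ is not finitely generated. This invariant is far easier to verify than indecomposability of an infinite family.

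In the non-free direction your $\mathbb{Z}$ case is fine. Every subsemigroup of $\mathbb{Z}$ is a subgroup or lies in $\pm\mathbb{N}_0$, hence is finitely generated, and one then adjoins the finitely many elements outside the ideal. The bicyclic case, however, is only a plan and lacks two ingredients. First, to use any fact about $\B$ you need $U\cap\B$ to be finitely generated whenever $U$ is a finitely generated subsemigroup of the ambient semigroup. Adjoining finitely many elements is the easy direction; the one you need is the Jura/Ru\v{s}kuc theorem on subsemigroups with finite complement (Proposition~\ref{prop:FiniteGenerationLarge}, packaged in Proposition~\ref{prop:HowsonFiniteComplement}). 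Unlike for $\mathbb{Z}$, this cannot be skipped, since $\B$ has non-finitely generated subsemigroups such as $\{(i,j):i<j\}$. Second, that same set shows that semilinearity or eventual periodicity is not what forces finite generation: it is a linear subset of $\mathbb{N}_0^2$, yet it is an upper subsemigroup meeting every $\mathscr{R}$-class, hence not finitely generated. The decisive input is the Descal\c{c}o--Ru\v{s}kuc description (Propositions~\ref{prop:bisquare} and~\ref{prop:bifinite}). If $U_1$ or $U_2$ is finite diagonal, upper meeting finitely many $\mathscr{R}$-classes, or lower meeting finitely many $\mathscr{L}$-classes, then $U_1\cap U_2$ inherits this. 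If both are square, write each as a finite union of copies $B_i$, $B'_j$ of $\B$ and subsemigroups of $\mathbb{N}_0$. Either every $B_i\cap B'_j$ is empty, and then $U_1\cap U_2$ is a finite union of subsemigroups of $\mathbb{N}_0$. Or some $B_i\cap B'_j$ is nonempty; it is an inverse subsemigroup, so it contains a non-idempotent $(p,q)$ and hence also $(q,p)$ (if the common point is an idempotent $(p,p)$, use $(p,r)^{s-p}=(p,s)^{r-p}$ for suitable $(p,r)\in B_i$, $(p,s)\in B'_j$). Then $U_1\cap U_2$ is square and therefore finitely generated, as in Corollary~\ref{cor:biHowson}. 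Your region-by-region pumping argument would have to rediscover exactly this case analysis, and as written it does not.
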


The proof of this result will be divided into two parts: Theorem~\ref{thm:HonwsonNonFree} for non-free monogenic inverse semigroups, and Example~\ref{ex:HowsonCounter1} showing that the monogenic free inverse semigroup $\FI_1$ is not (semigroup) Howson.

In the case of $\FI_1$, the result above naturally gives rise to an algorithmic decision problem:
is it decidable whether the intersection of two finitely generated subsemigroups of $\FI_1$ is finitely generated? 
We formulate this problem in a general semigroup context as follows.
\medskip

\noindent
\textbf{Finitely Generated Intersection Problem (FGIP) for a semigroup $U$:} Is it algorithmically decidable whether the intersection of two given finitely generated subsemigroups $S$ and $T$ of $U$ is finitely generated?
\medskip

Our second main result is:

\begin{thmAB}
\label{thmB:FGIP}
The finitely generated intersection problem for the monogenic free inverse semigroup as a semigroup is decidable.
\end{thmAB}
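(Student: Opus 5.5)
We will work with the standard Munn-tree (Scheiblich) model of $\FI_1$: every element is a triple $(a,b,c)$ of integers with $a\le 0\le b$ and $a\le c\le b$ (with $(0,0,0)$ excluded in the semigroup case). Multiplication is
\[
(a,b,c)(a',b',c')=(\min(a,c+a'),\max(b,c+b'),c+c').
\]
Write $\pi(a,b,c)=c$ for the projection onto $\mathbb{Z}$, which is a homomorphism. The elements of $\FI_1$ then fall into three classes: $\P$ (those with $c>0$), $\N$ (those with $c<0$) and $\E$ (the idempotents, $c=0$).

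Given finite generating sets for $S$ and $T$, the first step is to decompose the problem along these three classes. The components $S\cap T\cap \P$ and $S\cap T\cap\N$ are mirror images of one another under the anti-automorphism $x\mapsto x^{-1}$, so we treat only the positive part. The idempotent elements of $S\cap T$ can be generated from positive and negative elements, so the key structural reduction is a lemma of the following form: $S\cap T$ is finitely generated if and only if certain \emph{one-sided} pieces, built from $\P$-elements, $\N$-elements and $\E$-elements, are finitely generated.

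For the positive piece, we exploit periodicity. An element $(a,b,c)$ with $c$ large can be described by its \emph{left overhang} $-a$ and \emph{right overhang} $b-c$, together with $c$. Products of generators of $S$ of large length have overhangs bounded by the maximal overhangs among the generators. The aim is a Cut and Paste Lemma: an element of $S$ with large $c$ can be cut into a prefix and a suffix, with an intermediate portion that depends only on a bounded window. It follows that, for each fixed pair of overhangs, the set of $c$ values realised in $S$ is eventually periodic and computable. This should be provable by a pumping argument on a finite automaton whose states record the current overhang profile relative to the running position. Intersecting gives the same description for $S\cap T$, namely a finite union of ``overhang-type $\times$ eventually periodic set''. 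Finite generation of $S\cap T\cap \P$ then amounts to asking whether the set of indecomposables, the elements not expressible as products of two elements of $S\cap T$, is finite. Indecomposability is decided by examining whether, for a given overhang type, arbitrarily large $c$ can be split as $c_1+c_2$ with compatible overhang types. Since everything is eventually periodic, this reduces to finitely many checks.

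The main obstacle is the interaction with idempotents and mixed products. An element of $S\cap T$ may only be expressible inside $S\cap T$ using factors from $\N$ or $\E$, for instance $(a,b,c)=p\,e$ with $e$ an idempotent extending the right overhang. Example~\ref{ex:HowsonCounter1} shows that infinite generation arises precisely from families such as idempotents $(-n,m,0)$ with unbounded overhangs that cannot be factored. I would handle this by proving that the idempotents in $S\cap T$ form, up to the overhang data, a subset of $\mathbb{N}^2$ that is semilinear and effectively computable: each idempotent in $S$ arises from a product of a bounded number of ``turning'' blocks. Finite generation then reduces to deciding whether a computable semilinear set is contained in the finitely generated closure of its minimal elements together with products from the $\P$ and $\N$ parts. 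Because Presburger arithmetic is decidable, this final test can be expressed as a Presburger sentence, which gives decidability.
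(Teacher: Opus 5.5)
Your proposal has a genuine gap, and it sits where the difficulty lies: the case where $S$, $T$ and $S\cap T$ are all two-sided.

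\textbf{The structural reduction.} You never state the ``key structural reduction'', and its natural reading is false. That reading says finite generation of $S\cap T$ is equivalent to finite generation of its positive, negative and idempotent parts. But $\FI_1$ is generated by $(0,1,1)$ and $(-1,-1,0)$, while $\P(\FI_1)=\P$ meets every $\P_{a,b}$ and so is not finitely generated by Proposition~\ref{prop:ChoRuskuc}\ref{it:CR2}. Also $E(\FI_1)$ is an infinite semilattice, hence not finitely generated.

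\textbf{The indecomposables test.} This fails too. Any $u\in\inv(U)$ factors as $u\cdot(u^{-1}u)$ inside $U$. So an inverse subsemigroup such as $E(\FI_1)$ has no indecomposables at all, although it is not finitely generated. Indecomposability only controls generation along a well-founded measure. Moreover, splitting $c=c_1+c_2$ among positive elements ignores factorisations through negative elements and idempotents, such as $u=u_1e$ with $e$ idempotent. That is how most positive elements of a two-sided subsemigroup actually arise.

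\textbf{The Presburger step.} This step is unsupported. Closure under the product of $\FI_1$ is an unbounded iteration, not a first-order operation on semilinear sets. You also give no computable bound on a generating set that would turn ``finitely generated'' into a single sentence.

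\textbf{What is missing.} You need a characterisation of finitely generated two-sided subsemigroups, which is Theorem~\ref{thm:2side}. Its conditions are: $\inv(U)$ is finitely generated; $U$ is periodic; and $a^\ast\le\min(v_{\PE(U)},v_{\PE(U^{-1})})$, $b^\ast\le\min(h_{\PE(U)},h_{\PE(U^{-1})})$. Here $a^\ast,b^\ast$ are the least $\lambda$- and $\rho$-values of idempotents in $E(U)\setminus\PIG(U)$, and $h,v$ are as in Definition~\ref{defn:HorizonVertNumbers}.

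For $U=S\cap T$ with $S,T$ two-sided and finitely generated, the three conditions behave as follows. Condition (i) comes free from $\inv(S\cap T)=\inv(S)\cap\inv(T)$ and Theorem~\ref{thm:JonesTrotter}. Condition (ii) comes from the Cut and Paste Lemma. Condition (iii) is the real decider, and it can fail on its own (Example~\ref{ex:2sideIntersect}). It is then checked by finitely many rational-membership queries in the finite windows $\V^1_{n,\nu}$ and $\V^3_{n,\nu}$.

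\textbf{Example~\ref{ex:HowsonCounter1}.} Your reading of this example is also off. There $S\cap T\subseteq\P$ contains no idempotents at all. Non-finite generation comes from the positive elements $(-2a,2,2)\in\P_{2a,0}$.

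The same example shows that $\P(S\cap T)$ and $\N(S\cap T)$ are not mirror images: inversion sends $\N(S\cap T)$ to $\P(S^{-1}\cap T^{-1})$, a different set. So you need a separate, decidable step determining whether $S\cap T$ is one- or two-sided. In the one-sided case, finite generation is just the condition of meeting finitely many $\PE_{a,b}$ (Theorem~\ref{thm:1side}).
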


The paper is structured as follows. 
Section~\ref{sec:Prelim} consists of preliminary definitions and results, including the classification of monogenic inverse semigroups. 
Section~\ref{sec:NonFree} establishes the converse implication of Theorem~\ref{thmA:Howson}, namely that every non-free monogenic inverse semigroup has the semigroup Howson property.
From Section~\ref{sec:FI1Prelim}, our focus is on the monogenic free inverse semigroup $\FI_1$. We begin with notation and basic results regarding $\FI_1$ in Section~\ref{sec:FI1Prelim}. 
The forward implication of Theorem~\ref{thmA:Howson} is then proved in Section~\ref{sec:FI1NonHowson}.
Towards proving Theorem~\ref{thmB:FGIP}, 
we first obtain characterisations of finitely generated subsemigroups of $\FI_1$ in
Section~\ref{sec:Character}. Section~\ref{sec:FGIP} is devoted to the proof of Theorem~\ref{thmB:FGIP}. We finish the paper with remarks and open questions in Section~\ref{sec:RemarkOpen}.

\section{Preliminaries}
\label{sec:Prelim}

In this section, we cover the general notation and concepts that will be used throughout the paper. 
Further preliminaries will be presented in Section~\ref{sec:FI1Prelim}, where the monogenic free inverse semigroup and its key properties will be introduced.

\subsection{General notation} 
We use the standard notation  $\mathbb{Z}$, $\mathbb{N}$ and $\mathbb{N}_0$ to denote the sets of integers, positive integers and non-negative integers, respectively. 
We will also use the same notation for the corresponding additive semigroups, i.e. $\mathbb{Z}$ for the free cyclic group, $\mathbb{N}$ for the free semigroup, and $\mathbb{N}_0$ for the free monoid.
For $m,n\in\mathbb{Z}$ with $m\leq n$, we use the interval notation $[m,n]$ to denote the set $\{m, m+1, \dots, n\}$, and abbreviate this to just $[n]$ when $m=1$. 

\subsection{Semigroups and inverse semigroups}
For basic semigroup theory, we refer the reader to the standard texts \cite{CliffordPreston1, Howie}. For inverse semigroups, the reader may consult \cite{Lawson, Petrich}.
We will make occasional use of Green's $\mathscr{R}$, $\mathscr{L}$, $\mathscr{H}$, $\mathscr{D}$ equivalences on a semigroup $S$, which are defined by:
\[
x\mathscr{R}y \Leftrightarrow xS^1=yS^1,\;\;\; x\mathscr{L}y \Leftrightarrow S^1x=S^1y,\;\;\; 
\mathscr{H}=\mathscr{R}\cap\mathscr{L},\;\;\; 
\mathscr{D}=\mathscr{R}\circ\mathscr{L}(=\mathscr{R}\circ\mathscr{L}).
\]
Here $S^1$ stands for $S$ with an identity adjoined to it if $S$ does not already have one. For an inverse semigroup $S$, we have a simpler description of Green's $\mathscr{R}$- and $\mathscr{L}$-relations: 
\[
x\mathscr{R}y \Leftrightarrow xx^{-1} =  yy^{-1}, \;\;\; x\mathscr{L}y \Leftrightarrow x^{-1}x = y^{-1}y. 
\]

For a semigroup $S$ we denote by $E(S)$ the set of its idempotents. When $S$ is inverse, $E(S)$ forms a semilattice, i.e. a commutative semigroup of idempotents.

Throughout the paper, the term \emph{subsemigroup} will refer to a subset of a semigroup $S$ that is closed under products. When we require closure under inverses as well (in an inverse semigroup) we will use the term \emph{inverse subsemigroup}.

\subsection{Monogenic inverse semigroups} 
The complete classification of monogenic inverse semigroups seems to have been (re)discovered several times, in slightly different forms, 
first by authors in the former Soviet Union and later elsewhere.
The earliest such classification seems to be in papers by Gluskin \cite{Gluskin57,Gluskin61,Gluskin63}, and subsequent papers include \cite{Ershova71,Djadcenko73,Djadcenko74,Eberhart72,Conway84,Preston86}. 
A detailed account of the classification can be found in
\cite[Chapter IX]{Petrich}, 
which we follow here.

\begin{thm}[{\cite[Chapter IX]{Petrich}}]
\label{thm:MonoClass1}
A monogenic inverse semigroup is isomorphic to one of the following:
\begin{thmenumerate}
    \item \label{it:corclass1} the monogenic free inverse semigroup $\FI_1$;
    \item \label{it:corclass2} a finite inverse semigroup;
    \item \label{it:corclass3} an inverse semigroup $S$ with an ideal $I$ isomorphic to the bicyclic semigroup $\B$ such that $S\setminus I$ is finite;
    \item \label{it:corclass4} an inverse semigroup $S$ with an ideal $I$ isomorphic to the integers $\mathbb{Z}$ such that $S\setminus I$ is finite. \hfill\qed
\end{thmenumerate}
\end{thm}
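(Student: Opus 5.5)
The plan is to take the quotient of the monogenic free inverse semigroup $\FI_1=\langle a\rangle$ by the congruence $\rho$ identifying two elements exactly when they have the same image in $S=\langle s\rangle$. Every monogenic inverse semigroup $S$ is a quotient $\FI_1/\rho$, since sending $a\mapsto s$ extends to a surjective homomorphism $\FI_1\to S$. If $\rho$ is trivial, we are in case \ref{it:corclass1}. Otherwise we analyse $\rho$ through the standard description of $\FI_1$ as triples $(i,k,j)$, with $i,j\ge 0$ and $k\in[-i,j]$, where $a^m$ corresponds to $(0,m,m)$. Equivalently, elements are Munn trees: intervals $[-i,j]$ with a marked endpoint $k$. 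The key point is that a nontrivial relation forces a relation of a very particular shape:
\begin{itemize}
\item a power relation $a^{m+r}=a^m$, which makes $S$ periodic; or
\item a relation of the form $a^{p}a^{-p}\cdots = \cdots$ that collapses the idempotent semilattice beyond a certain level.
\end{itemize}

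First I would show that $\rho$ is determined by two pieces of data. One is the restriction of $\rho$ to the semilattice $E(\FI_1)\cong\{(i,0,j)\}\cong\mathbb{N}_0\times\mathbb{N}_0$ minus the origin, ordered reversely. The other is the kernel data on the group-like parts. Congruences on inverse semigroups are determined by their kernel and trace (the Petrich--Reilly kernel–trace approach), so it suffices to classify the possible traces and kernels.

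Next comes the case split.
\begin{itemize}
\item Suppose the trace identifies some idempotent $(i,0,j)$ with a strictly smaller one. Normality of the trace, meaning invariance under conjugation by $a$ and $a^{-1}$ (which shifts intervals), then propagates the identification. I would show that all idempotents with $i+j$ beyond some bound $N$ either collapse to a single class, or collapse so that each $\mathscr{D}$-class above level $N$ becomes a group. In the first subcase, together with the kernel, all sufficiently long elements fall into finitely many classes, or even one (a zero), so $S$ is finite: case \ref{it:corclass2}. In the second subcase, the elements above level $N$ form an ideal with trivial idempotent structure up to a single idempotent, hence a cyclic group. It is infinite precisely when the kernel imposes no power relation; this gives case \ref{it:corclass4}, and otherwise we are again in case \ref{it:corclass2}.
\item Suppose instead the trace identifies idempotents only in a way compatible with one coordinate. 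Then the quotient of the tail is the bicyclic semigroup $\B=\langle b,c\mid bc=1\rangle$, realised as pairs $(i,j)$ with one direction of the interval collapsed. The complement of this ideal is the finite set of short elements, giving case \ref{it:corclass3}.
\end{itemize}

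The main obstacle is the middle step: proving that any nontrivial normal congruence on $E(\FI_1)$ has one of these few eventual shapes. I would attack it through the shift-invariance. If $(i,0,j)\,\rho\,(i',0,j')$ with $(i,j)\neq(i',j')$, then multiplying by idempotents and conjugating by powers of $a$ yields $(i+t,0,j+u)\,\rho\,(\max(i',i+t),0,\max(j',j+u))$ for all $t,u\ge 0$. From this I would extract a threshold $N$ past which classes are unions along the diagonal or along a coordinate. Finally I would check that the complement of the resulting ideal is finite, because only finitely many $(i,k,j)$ lie below the threshold.
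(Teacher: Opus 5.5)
The paper does not prove this statement; it quotes it from Petrich, Chapter IX, so your outline has to stand on its own. The framework is sound. $S$ is a quotient of $\FI_1$, and a congruence is determined by its kernel and trace. A non-trivial congruence has non-trivial trace, since $\mathscr{H}$ is trivial on $\FI_1$. Each $D_m$ is finite, so $\FI_1\setminus I_N$ is finite. However, the step you flag as the main obstacle, classifying the trace $\tau$, is not carried out, and the shapes you predict for it are wrong.

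\emph{(1) Your two bullets are not a case split.} If $e\,\tau\,f$ with $e\neq f$, then $e\,\tau\,ef$ and $ef$ is strictly below $e$ or $f$. So every non-trivial congruence falls under the first bullet, including the bicyclic ones, and for these its conclusion fails. For example, in $\B=\langle b,c\mid bc=1\rangle$ generated by $c$ we have $c^{-1}c=bc=1=b^2c^2=c^{-2}c^2$. Hence $a\mapsto c$ identifies $(1,0,0)$ with the strictly smaller $(2,0,0)$, yet the tail is bicyclic.

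\emph{(2) A tail in which each $\mathscr{D}$-class becomes a group cannot occur.} Two distinct $\tau$-related idempotents of $D_m$ are $\tau$-related to their product, which lies in a strictly higher $\mathscr{D}$-class.

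\emph{(3) ``All idempotents of the tail in one class, hence $S$ finite'' is false.} That is exactly the trace in case (iv), where the ideal is $\mathbb{Z}$. Your two sub-cases of the first bullet describe the same trace and reach opposite conclusions. What separates (ii) from (iv) is the kernel: the image $J$ of the tail has a single idempotent $e$, so it is a group. Since $e$ is central, $s\mapsto es$ retracts $S$ onto $J$, and so $J$ is cyclic, generated by $ea$.

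Your last paragraph does not explain how a threshold arises; the displayed formula uses only multiplication by idempotents and produces none. Write $(i,j)$ for $(i,0,j)$. The missing argument runs as follows.
\begin{enumerate}
\item $\tau$-classes are convex: $e\,\tau\,f$ and $e\ge g\ge f$ give $g=ge\,\tau\,gf=f$. So a non-trivial $\tau$ contains a unit step $(i,j)\,\tau\,(i+1,j)$ or $(i,j)\,\tau\,(i,j+1)$.
\item Normality under $a^{-1}(i,j)a=(i+1,\max(j-1,0))$ and $a(i,j)a^{-1}=(\max(i-1,0),j+1)$ slides an $i$-step at $(i,j)$ along the antidiagonal $i'+j'=i+j$ to both axes, and then along the $i$-axis.
\item Multiplying by idempotents spreads the step. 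So, with $N=i+j$, any two idempotents of $I_N$ with equal $j$-coordinate are $\tau$-related.
\item Convexity also shows that if $\tau$ has no $j$-step anywhere, it never relates idempotents with different $j$-coordinates.
\end{enumerate}
Hence the only non-trivial eventual shapes are collapse along exactly one coordinate, or, if both kinds of step occur, total collapse on some $I_M$.

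The bicyclic case also needs a kernel argument you omit. For $u=(i,k,j)\in I_N$, the idempotents $uu^{-1}=(i,0,j)$ and $u^{-1}u=(i+k,0,j-k)$ have different $j$-coordinates unless $k=0$. So the kernel meets $I_N$ only in idempotents. By $E$-unitarity, two elements of $I_N$ are then identified exactly when they have equal $k$ and equal $j$. The map $(i,k,j)\mapsto(j,j-k)$ then sends the image of $I_N$ isomorphically onto $\B$. With these pieces supplied, your outline becomes a complete proof.
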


In \cite[Theorem IX.3.11]{Petrich} each of the types appearing in the above classification
is explicitly given by means of an inverse semigroup presentation. Specifically, $\langle a\mid \ \rangle$ for \ref{it:corclass1},
$\langle a\mid a^n = a^{n+m} \rangle$ ($m,n\in\mathbb{N}$) for \ref{it:corclass2},
$\langle a\mid a^n = a^{-1}a^{n+1} \rangle$ ($n\in\mathbb{N}$) for \ref{it:corclass3}, and
$\langle a\mid a^na^{-1} = a^{-1}a^{n} \rangle$ ($n\in\mathbb{N}$) for \ref{it:corclass4}.
However, we will not require this level of detail, and will only need workable definitions for $\FI_1$ and $\B$, which will be given in Sections \ref{sec:FI1NonHowson} and \ref{sec:NonFree}, respectively.

\subsection{Finite generation auxiliary results} The following elementary results concerning finite generation and the (inverse) semigroup Howson property will be used throughout the paper. 

\begin{prop}
\label{prop:invplaingen}
An inverse semigroup is finitely generated as an inverse semigroup if and only if it is finitely generated as a (plain) semigroup. \hfill\qed
\end{prop}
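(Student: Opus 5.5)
Both directions come from comparing the two generating processes: closing a set under products and inverses, versus closing it under products alone. Let $S$ be an inverse semigroup.

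\emph{Semigroup generation implies inverse generation.} Suppose $S$ is generated as a plain semigroup by a finite set $X$. Every element of $S$ is then a product of elements of $X$. Such a product is in particular a product of elements of $X\cup X^{-1}$, so $X$ generates $S$ as an inverse semigroup as well.

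\emph{Inverse generation implies semigroup generation.} Suppose $S$ is generated as an inverse semigroup by a finite set $X$. I take $Y = X\cup X^{-1}$, which is finite, and claim it generates $S$ as a plain semigroup. Let $T$ be the subsemigroup of $S$ generated by $Y$. The key step is to show that $T$ is closed under inversion. Take $t = y_1 y_2\cdots y_k \in T$ with each $y_i\in Y$. Using the identities $(xy)^{-1}=y^{-1}x^{-1}$ and $(x^{-1})^{-1}=x$, which hold in any inverse semigroup, we get
\[
t^{-1} = y_k^{-1}\cdots y_1^{-1}.
\]
Each $y_i^{-1}$ lies in $Y$, so $t^{-1}\in T$. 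Hence $T$ is an inverse subsemigroup of $S$ containing $X$. Since $S$ is the smallest inverse subsemigroup containing $X$, it follows that $T=S$.

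There is no real obstacle here. The only point needing care is that the inverse-subsemigroup closure of $X$ coincides with the plain-semigroup closure of $X\cup X^{-1}$. This rests on the anti-automorphism law $(xy)^{-1}=y^{-1}x^{-1}$, extended by induction on the length $k$ of the product.
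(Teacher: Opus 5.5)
Your proof is correct: the semigroup generated by $X\cup X^{-1}$ is closed under inversion because $(y_1\cdots y_k)^{-1}=y_k^{-1}\cdots y_1^{-1}$, so it equals the inverse subsemigroup generated by $X$, and the other direction is immediate. The paper states this proposition without proof as an elementary fact, and your argument is exactly the standard justification it leaves implicit.
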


\begin{prop}
\label{prop:unionfg}
A semigroup that is a finite union of finitely generated subsemigroups is finitely generated. \hfill\qed
\end{prop}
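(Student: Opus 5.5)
Suppose $S = S_1 \cup \dots \cup S_k$, where each $S_i$ is a subsemigroup of $S$ generated by a finite set $A_i$. The plan is to show directly that the finite set $A = A_1 \cup \dots \cup A_k$ generates $S$.

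First, observe that $A \subseteq S$, so the subsemigroup $\langle A\rangle$ generated by $A$ is contained in $S$, because $S$ is closed under products.

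Conversely, take any $s \in S$. Then $s \in S_i$ for some $i$. Since $S_i = \langle A_i\rangle$, we can write $s = a_1 a_2 \cdots a_n$ with $n \ge 1$ and every $a_j \in A_i \subseteq A$. Here the product is taken in $S_i$, and this agrees with the product in $S$ because $S_i$ is a subsemigroup. Hence $s \in \langle A\rangle$, so $S \subseteq \langle A\rangle$.

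Combining the two inclusions gives $S = \langle A\rangle$, with $A$ finite as a finite union of finite sets.

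If one of the $S_i$ is empty, it is finitely generated by $A_i = \emptyset$, and the argument goes through unchanged. If $S$ itself is empty, it is finitely generated by the paper's convention.

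No step is a real obstacle. The one point to check is that products in each $S_i$ coincide with products in $S$, which holds because the $S_i$ are subsemigroups.
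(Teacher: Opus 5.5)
Your proof is correct: the union of the finite generating sets of the $S_i$ is a finite subset of $S$ that generates every $S_i$, and hence all of $S$. The paper states this as an elementary fact without proof, and your argument is the standard one it implicitly relies on.
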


\begin{prop}
\label{prop:subHowson}
If a semigroup (resp.\ inverse semigroup) has the semigroup Howson property (resp.\ inverse semigroup Howson property), then so do its subsemigroups (resp.\ inverse subsemigroups). \hfill\qed
\end{prop}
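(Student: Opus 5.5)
The statement is Proposition~\ref{prop:subHowson}: the Howson properties pass to subsemigroups (respectively, inverse subsemigroups). The plan is to observe that finite generation of a subalgebra of $T$ is an intrinsic property. It does not depend on the ambient structure in which $T$ sits, and intersections inside $T$ coincide with intersections inside $S$.

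Concretely, let $S$ be a semigroup with the semigroup Howson property and let $T$ be a subsemigroup of $S$. Take finitely generated subsemigroups $A,B$ of $T$, say $A=\langle X\rangle$ and $B=\langle Y\rangle$ with $X,Y$ finite.

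First, the subsemigroup of $S$ generated by $X$ is the set of all finite products of elements of $X$. This is exactly the subsemigroup of $T$ generated by $X$, since $T$ is closed under multiplication. Hence $A$ and $B$ are finitely generated subsemigroups of $S$.

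Second, by the hypothesis on $S$, the set $A\cap B$ is either empty or a finitely generated subsemigroup of $S$, say $A\cap B=\langle Z\rangle$ with $Z$ finite. We have $Z\subseteq A\cap B\subseteq T$. By the same observation, the subsemigroup generated by $Z$ is the same whether computed in $S$ or in $T$. So $A\cap B$ is a finitely generated subsemigroup of $T$; if it is empty, it counts as finitely generated by convention.

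The inverse case is identical. Here $T$ is an inverse subsemigroup, and the inverse subsemigroup generated by a set $X\subseteq T$ consists of all finite products of elements of $X\cup X^{-1}$. This is well defined in $T$ because the inverse of an element of $T$, computed in $T$, is the unique inverse computed in $S$ (uniqueness of inverses in $S$). Alternatively, one can invoke Proposition~\ref{prop:invplaingen} to pass between inverse and plain generation, though this is not needed.

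There is no real obstacle. The only point needing care is that generation is intrinsic to the subalgebra, which is immediate from the description of generated subsemigroups as sets of words in the generators (and their inverses).
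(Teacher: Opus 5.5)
Your proof is correct. The paper states this proposition without proof as an elementary fact, and your argument is exactly the routine verification it leaves to the reader: (inverse) subsemigroup generation is intrinsic, so finite generation and intersections are the same whether computed in $T$ or in $S$.
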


\begin{prop}[\cite{SitSiu}]
\label{prop:SitSiu}
Any subsemigroup of $\mathbb{N}_0$ is finitely generated. \hfill\qed
\end{prop}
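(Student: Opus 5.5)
The statement is Proposition~\ref{prop:SitSiu}: every subsemigroup $S$ of $\mathbb{N}_0$ is finitely generated. The plan is to reduce to the case where the greatest common divisor of the nonzero elements is $1$, and then use the fact that such a subsemigroup contains all sufficiently large integers.

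If $S=\emptyset$ or $S=\{0\}$ there is nothing to prove, since $\emptyset$ is regarded as finitely generated and $\{0\}$ is generated by $0$. Otherwise let $S^+=S\setminus\{0\}\neq\emptyset$ and let $d=\gcd(S^+)$. Since divisibility chains of positive integers stabilise, $d$ is already the gcd of some finite subset $\{s_1,\dots,s_k\}\subseteq S^+$.

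Dividing by $d$ gives a subsemigroup $S'=\{s/d : s\in S\}$ of $\mathbb{N}_0$, isomorphic to $S$, whose positive elements have gcd $1$. So we may assume $d=1$.

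By B\'ezout, $1=\sum_i c_is_i$ with $c_i\in\mathbb{Z}$. The key step is to show that the additive semigroup $\langle s_1,\dots,s_k\rangle$ contains every integer $n\ge N$ for some $N$. Set $P=\sum_{c_i>0}c_is_i$ and $Q=\sum_{c_i<0}(-c_i)s_i$. Both lie in $\langle s_1,\dots,s_k\rangle\cup\{0\}$ and satisfy $P-Q=1$.

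If $Q=0$ then $1\in S$ and $S^+=\mathbb{N}$, which is generated by $1$ (together with $0$ if $0\in S$). Otherwise, take $N=Q^2$. Any $n\ge N$ can be written as $n=qQ+r$ with $q\ge Q$ and $0\le r<Q$. Then
\[
n=(q-r)Q+rP,
\]
which is a nonnegative combination of elements of $S$ and is not the empty sum. Hence $n\in S$.

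It follows that $S^+$ is contained in $\{s_1,\dots,s_k\}\cup(S^+\cap[1,N-1])\cup[N,\infty)$. Moreover, every element of $[N,\infty)$ already lies in the semigroup generated by $s_1,\dots,s_k$. Therefore $S$ is generated by the finite set $\{s_1,\dots,s_k\}\cup(S\cap[0,N-1])$.

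The only real obstacle is the "all sufficiently large integers" step, which is the classical Frobenius/Chicken McNugget argument given above. The rest is bookkeeping: handling $0$, handling the empty set, and the gcd reduction.
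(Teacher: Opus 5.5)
Your proof is correct. The gcd reduction, the Bézout pair with $P-Q=1$, the identity $n=(q-r)Q+rP$ with $q-r\geq 1$ for all $n\geq Q^2$, and the finite generating set $\{s_1,\dots,s_k\}\cup(S\cap[0,N-1])$ all check out, as do the edge cases $S=\emptyset$, $S=\{0\}$ and $Q=0$. The paper gives no proof of its own here (the proposition is simply quoted from Sit and Siu), and your argument is the standard one for this fact.

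A shorter route also exists: with $m=\min(S\setminus\{0\})$, the element $m$ together with the least element of $S$ in each occupied residue class modulo $m$ already generates $S$. This bypasses the gcd reduction and Bézout altogether.
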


\begin{prop}[{\cite{Jura}, \cite[Theorem 1.1]{RuskucLarge}}]
\label{prop:FiniteGenerationLarge}
Let $S$ be a semigroup and let $T$ be a subsemigroup of $S$ such that $S\setminus T$ is finite. Then, $S$ is finitely generated if and only if $T$ is finitely generated. \hfill\qed
\end{prop}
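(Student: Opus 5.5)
The implication ``$T$ finitely generated $\Rightarrow$ $S$ finitely generated'' is immediate: if $T=\langle Y\rangle$ with $Y$ finite, then $S=\langle Y\cup (S\setminus T)\rangle$, and $Y\cup(S\setminus T)$ is finite. So the work is in the converse. I will fix a finite generating set $A$ of $S$, put $F=S\setminus T$ and $n=|F|$. For $s\in S$ let $\lambda(s)$ denote the length of a shortest word over $A$ representing $s$. I will aim to show that $T$ is generated by the finite set
\[
Y=\{t\in T : \lambda(t)\le 2n+1\}.
\]

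I will prove by induction on $\lambda(t)$ that every $t\in T$ lies in $\langle Y\rangle$. If $\lambda(t)\le 2n+1$ there is nothing to prove. So suppose $k=\lambda(t)\ge 2n+2$, and fix a shortest word $a_1a_2\cdots a_k$ over $A$ representing $t$. For $1\le i\le k$ define the prefixes $p_i=a_1\cdots a_i$, and the suffixes $q_i=a_i\cdots a_k$.

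The key observation is that, by minimality of the word, the prefixes are pairwise distinct. Indeed, if $p_i=p_j$ with $i<j$, then $t=p_ia_{j+1}\cdots a_k$ would be represented by a strictly shorter word. Symmetrically, the suffixes are pairwise distinct. Hence at most $n$ of the prefixes $p_1,\dots,p_{k-1}$ lie in $F$, and at most $n$ of the suffixes $q_2,\dots,q_k$ lie in $F$. The cut positions $i\in[1,k-1]$ number $k-1\ge 2n+1>2n$. Therefore some cut $i$ has both $p_i\in T$ and $q_{i+1}\in T$. For this $i$ we have $t=p_iq_{i+1}$ with $\lambda(p_i)\le i<k$ and $\lambda(q_{i+1})\le k-i<k$. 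By induction both factors lie in $\langle Y\rangle$, hence so does $t$.

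I expect the only delicate point to be ensuring that a single cut simultaneously has its prefix and its suffix in $T$. Choosing a shortest representative resolves this, because it forces distinctness of prefixes and of suffixes, which is exactly what makes the counting bound of $2n$ bad positions work. Since $A$ is finite, only finitely many elements have $\lambda\le 2n+1$, so $Y$ is finite and $T$ is finitely generated.
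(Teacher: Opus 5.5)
Your proof is correct and complete. Choosing a shortest word over $A$ makes the prefixes $p_1,\dots,p_k$ pairwise distinct, and likewise the suffixes $q_1,\dots,q_k$. Hence at most $2n$ of the $k-1\geq 2n+1$ cut points have $p_i$ or $q_{i+1}$ in $S\setminus T$, so some cut has both factors in $T$, and strong induction on word length finishes the argument. The easy direction is also handled correctly.

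The paper gives no proof of this proposition; it imports the result from Jura and from Ru\v{s}kuc's Theorem 1.1, so there is no internal argument to compare against. Your proof works as a self-contained replacement. It also gives an explicit generating set for $T$: the elements of $T$ of word length at most $2|S\setminus T|+1$ over any finite generating set of $S$.
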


\section{Non-free monogenic inverse semigroups}
\label{sec:NonFree}

In this section, we show that every non-free monogenic inverse semigroup has the semigroup Howson property; these are semigroups listed in  \ref{it:corclass2}, \ref{it:corclass3} and \ref{it:corclass4} of Theorem~\ref{thm:MonoClass1}. 
We begin by considering the bicyclic semigroup $\B$. For an introduction to this well-known semigroup, we refer the reader to \cite[Chapter 1]{Howie} or \cite[Chapter 3]{Lawson}.

We will be working with the following concrete representation of $\B$:
\[
\B=\mathbb{N}_0\times\mathbb{N}_0,\quad 
(i,j)(k,l)=
\begin{cases}
(i-j+k,l) & \text{ if } j \leq k \\
(i,j-k+l) & \text{ if } j > k.
\end{cases}
\]

Descal\c{c}o and Ru\v{s}kuc \cite{DescalcoRuskuc2005, DescalcoRuskuc2008} described all (finitely generated) subsemigroups of $\B$, and we make use of some of their results.

The bicyclic semigroup $\B$ has a single $\mathscr{D}$-class and hence admits the following egg-box representation, in which rows and columns correspond to $\mathscr{R}$- and $\mathscr{L}$-classes, respectively: 
\vspace{1mm}
{\begin{center}
\begin{tikzpicture}[scale=0.55]
\foreach \i in {0, 1, 2, 3, 4} {
\draw[line width=1pt] (0, 1.5*\i) -- (6, 1.5*\i);
\draw[line width=1pt] (1.5*\i, 0) -- (1.5*\i, 6);}
\foreach \i in {0, 1, 2, 3, 4} {
\draw[line width=1pt] (6, 1.5*\i) -- (6.25, 1.5*\i);
\draw[line width=1pt] (1.5*\i, 0) -- (1.5*\i, -0.25);}

\node (id) at (0.75, 5.18) {\tiny$(0,0)$};
\node (x) at (2.25, 5.18) {\tiny$(0,1)$};
\node (x2) at (3.75, 5.18) {\tiny$(0,2)$};
\node (x3) at (5.25, 5.18) {\tiny$(0,3)$};

\foreach \i in {1, 2, 3}{
\node (x0x\i) at ($(0.75, 4.5)- (0, 1.5*\i) + (0, 0.75)$) {\tiny$(\i,0)$};
}

\foreach \i in {1, 2, 3}{
\node (x1x\i) at ($(2.25, 4.5)- (0, 1.5*\i) + (0, 0.75)$) {\tiny$(\i,1)$};
}

\foreach \i in {1, 2, 3}{
\node (x2x\i) at ($(3.75, 4.5) - (0, 1.5*\i) + (0, 0.75)$) {\tiny$(\i,2)$};
}

\foreach \i in {1, 2, 3}{
\node (x3x\i) at ($(5.25, 4.5) - (0, 1.5*\i) + (0, 0.75)$) {\tiny$(\i,3)$};

\node (dotsh) at (7,3) {\ldots};
\node (dotsv) at (3, -0.75) {\vdots};
}

\end{tikzpicture}
\end{center}
}

In this diagram, the elements on the diagonal are precisely the idempotents of $\B$. 
For every $i\in \mathbb{N}_0$, we write $R_i$ (resp.\ $L_i$) for the $\mathscr{R}$- (resp.\ $\mathscr{L}$-) class consisting of elements in the $(i+1)$th row (resp.\ column).

We call a subsemigroup $S$ of $\B$: 
\begin{bulletmidsentence}
    \item \emph{diagonal} if it solely consists of elements lying on the diagonal, i.e.\ if it is a subsemilattice of the semilattice $E(\B)$ of idempotents of $\B$;
    \item \emph{upper (resp.\ lower)} if it contains no elements below (resp.\ above) the diagonal;
    \item \emph{square} if it has elements both above and below the diagonal.
\end{bulletmidsentence}
We remark that a square subsemigroup was termed two-sided in \cite{DescalcoRuskuc2005, DescalcoRuskuc2008}, but we reserve the term for a different future use in this paper.

We will make use of the following two results:

\begin{prop}[{\cite[Theorem 7.6]{DescalcoRuskuc2005}}]
\label{prop:bisquare}
A square subsemigroup of the bicyclic semigroup $\B$ is a finite union of copies of $\B$ and subsemigroups of $\mathbb{N}_0$. \hfill\qed
\end{prop}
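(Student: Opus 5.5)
One could simply cite \cite[Theorem 7.6]{DescalcoRuskuc2005}. A self-contained argument would exploit the interplay between the upper and lower elements of a square subsemigroup $S\le\B$, as follows.

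\emph{Producing bicyclic copies.} Fix $a=(i,j)\in S$ with $i<j$ and $b=(k,l)\in S$ with $k>l$. A direct computation in $\B$ gives
\[
a^n=(i,\,j+(n-1)(j-i)),\qquad b^n=(k+(n-1)(k-l),\,l).
\]
Thus the column index of $a^n$ and the row index of $b^n$ grow without bound along arithmetic progressions. Multiplying suitable powers $b^m a^n$ and $a^n b^m$, and taking $d$ to be a common multiple of $j-i$ and $k-l$, I expect to obtain elements of $S$ of the form $(p,p+d)$ and $(p+d,p)$ for some $p$. These two elements generate
\[
\{(p+md,\,p+nd) : m,n\in\mathbb N_0\}\cong \B .
\]

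\emph{Covering the region of large coordinates.} Next, let $\delta$ be the gcd of all differences $|r-s|$ over elements $(r,s)\in S$. Using the bicyclic copies above, together with left and right translations by elements of $S$, I would show the following. There is a threshold $N$ such that the elements of $S$ with both coordinates at least $N$ are exactly the points of finitely many ``grids'' $\{(p+m\delta,\,q+n\delta)\}$, one for each residue class of $(p,q)$ modulo $\delta$ that is realised in $S$. Each such grid is closed under multiplication: products stay in the grid because the residue classes mod $\delta$ behave additively under the bicyclic multiplication. Each grid is therefore a copy of $\B$, possibly after translating its base point.

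\emph{The remaining elements.} These have a coordinate below $N$, so they lie in finitely many rows $R_r$ or columns $L_c$ with $r,c<N$. Within a single row, the map $(r,s)\mapsto s-r$ on the upper elements is a homomorphism into $\mathbb N_0$, since $(r,s)(r,t)=(r,\,s-r+t)$ when $s\ge r$. Dually, within a column the lower elements form a semigroup embedding into $\mathbb N_0$. So, after possibly adjoining the finitely many elements near the diagonal to the relevant pieces, the elements of $S$ in these rows and columns form finitely many subsemigroups isomorphic to subsemigroups of $\mathbb N_0$.

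\emph{Main obstacle.} The step I expect to be hardest is the second one. It requires showing that beyond the threshold $S$ fills the grids \emph{completely}, rather than merely meeting each residue class somewhere, and that the finitely many pieces can be chosen to be subsemigroups rather than just subsets. I would first attack this by using $(p,p+d)$ and $(p+d,p)$ as two-sided ``shifts'' to saturate each residue class. I would then check closure case by case according to which coordinate dominates in the product formula.
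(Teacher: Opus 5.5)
The paper gives no proof of its own here; it simply quotes \cite[Theorem 7.6]{DescalcoRuskuc2005}. Your self-contained outline is a viable alternative, and your first and third steps are correct. In the third step nothing needs to be adjoined: any element with a coordinate below $N$ lies in a half-row $\{(r,s):s\ge r\}$ with $r<N$ or in a half-column $\{(r,c):r\ge c\}$ with $c<N$, and each of these, diagonal element included, is a copy of $\mathbb{N}_0$.

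The target statement of your second step is also true, with one correction. The grids that occur are indexed by residues realised by elements of $S$ with both coordinates at least $N$, not by all residues realised in $S$. To see this, write $B_{q,d}:=\{(q+md,q+nd):m,n\in\mathbb{N}_0\}$. In the square subsemigroup $B_{5,2}\cup\{(2,2)\}$ the residue $0 \bmod 2$ is realised only by $(2,2)$, so no corresponding grid appears above any threshold $N\ge 3$.

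The genuine gap is the mechanism you propose for the second step. Your $d$ is a common multiple of $j-i$ and $k-l$, and it may exceed $\delta$. Two-sided shifts by $(p,p+d)$ and $(p+d,p)$ only move an element within its residue class modulo $d$ in each coordinate. For $d>\delta$, a class $\{(r,s): r\equiv\rho_1,\ s\equiv\rho_2 \pmod d\}$ with $\rho_1\neq\rho_2$ is not closed under multiplication: the branch $s\le r'$ sends the first coordinate to $2\rho_1-\rho_2$. Nothing in your plan shows that all the mod-$d$ classes making up one diagonal mod-$\delta$ class occur together, and that is exactly what is needed to obtain copies of $\B$.

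The simplest repair is to take $d=\delta$ from the start. The map $(r,s)\mapsto s-r$ is a homomorphism $\B\to\mathbb{Z}$. Its image on $S$ is a subsemigroup of $\mathbb{Z}$ containing both positive and negative integers, so it equals $\delta\mathbb{Z}$. Hence you may choose $a=(i,i+\delta)$ and $b=(l+\delta,l)$ in $S$. Your power formulas give $b^ma^n=(c+m\delta,\,c+n\delta)$ with $c=\max(i,l)$. So $ba^2=(p,p+\delta)$ and $b^2a=(p+\delta,p)$ for $p=c+\delta$, and therefore $B_{p,\delta}\subseteq S$. For $(r,s)\in S$ with $r,s\ge p$ one computes
\[
(p+m\delta,\,p+n\delta)(r,s)=(r+(m-n)\delta,\,s)\ \ (p+n\delta\le r),\qquad (r,s)(p+m\delta,\,p+n\delta)=(r,\,s+(n-m)\delta)\ \ (p+m\delta\le s).
\]
So $S$ contains every $(r',s')$ with $r',s'\ge p$, $r'\equiv r$ and $s'\equiv s\pmod\delta$. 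Since $r\equiv s\pmod\delta$, this set is all of $B_{q,\delta}$ for the unique $q\in[p,p+\delta-1]$ with $q\equiv r\pmod\delta$. Thus your second step holds with $N=p$, and
\[
S=\bigcup_{\substack{p\le q<p+\delta\\ (q,q)\in S}}B_{q,\delta}\ \cup\ \bigcup_{r<p}\bigl(S\cap\{(r,s):s\ge r\}\bigr)\ \cup\ \bigcup_{c<p}\bigl(S\cap\{(r,c):r\ge c\}\bigr)
\]
is the required decomposition into finitely many copies of $\B$ and of subsemigroups of $\mathbb{N}_0$.
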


\begin{prop}[{\cite[Theorem 2.1]{DescalcoRuskuc2008}}]
\label{prop:bifinite}
A subsemigroup $S$ of the bicyclic semigroup $\B$ is finitely generated if and only if one of the following holds: 
\begin{thmenumerate}
    \item \label{it:bi1} $S$ is a finite diagonal subsemigroup;
    \item \label{it:bi2} $S$ is a square subsemigroup;
    \item \label{it:bi3} $S$ is an upper subsemigroup and the set $\{i\in\mathbb{N}_0 : R_i\cap S\neq\emptyset
    \}$ is finite;
    \item \label{it:bi4} $S$ is a lower subsemigroup and the set $\{i\in\mathbb{N}_0 : L_i\cap S\neq\emptyset\}$ is finite. \hfill\qed
\end{thmenumerate}
\end{prop}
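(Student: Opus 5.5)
We prove the equivalence by treating separately the four mutually exclusive shapes a subsemigroup $S$ of $\B$ can have: diagonal, upper but not diagonal, lower but not diagonal, and square. The lower case will follow from the upper one via the anti-automorphism $(i,j)\mapsto(j,i)$ of $\B$. This map swaps rows with columns, upper subsemigroups with lower ones, and generating sets with generating sets. So three things remain: the diagonal case, the square case, and the upper case.

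\emph{Diagonal case.} Since $(i,i)(k,k)=(\max(i,k),\max(i,k))$, the product of two diagonal elements is again one of the two factors. Hence the subsemigroup generated by a set $X$ of idempotents is $X$ itself. It follows that a diagonal subsemigroup is finitely generated if and only if it is finite, which is \ref{it:bi1}.

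\emph{Square case.} Here we show that every square $S$ is finitely generated. By Proposition~\ref{prop:bisquare}, $S$ is a finite union of copies of $\B$ and subsemigroups of $\mathbb{N}_0$. Each copy of $\B$ is generated by two elements, the images of $(0,1)$ and $(1,0)$. Each subsemigroup of $\mathbb{N}_0$ is finitely generated by Proposition~\ref{prop:SitSiu}. Proposition~\ref{prop:unionfg} then shows that $S$ is finitely generated.

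\emph{Upper case, necessity.} Let $(i,j),(k,l)$ lie on or above the diagonal, so $i\le j$ and $k\le l$. Their product lies in row $i$ if $j>k$, and in row $i-j+k\le k$ if $j\le k$. In either case its row index is at most the larger of the two factors' row indices. So the row indices occurring in the subsemigroup generated by a finite set $X$ are bounded by $\max$ of the row indices in $X$. Hence a finitely generated upper $S$ meets only finitely many rows.

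\emph{Upper case, sufficiency.} Suppose $S$ is upper, not diagonal, and $S\subseteq\bigcup_{i\le N}R_i$.
\begin{itemize}
\item Only finitely many elements of $S$ have column index below $N$; we put all of them into the generating set.
\item Consider the offset set $D=\{l-k : (k,l)\in S\}$. It is a subsemigroup of $\mathbb{N}_0$, because the product of two elements with offsets $d$ and $d'$ has offset $d+d'$. It contains a positive element because $S$ is not diagonal. By Proposition~\ref{prop:SitSiu}, $D$ is finitely generated; let $d=\gcd D$. Then $D$ contains all sufficiently large multiples of $d$.
\item Fix a row $i\le N$ and let $A_i=\{j : (i,j)\in S,\ j\ge N\}$. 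If $(i,j)\in S$ with $j\ge N$ and $(k,l)\in S$, then $k\le N\le j$. So either $j>k$, giving $(i,j)(k,l)=(i,\,j+(l-k))$, or $j=k=N$, in which case the product is $(i,l)$ with $l=j+(l-k)$, the same value. Thus $A_i+D\subseteq A_i$.
\item Every offset in $D$ is a multiple of $d$, so $A_i$ lies in finitely many residue classes modulo $d$. For each residue class $r$ that $A_i$ meets, choose an element of $A_i$ in that class. Within that class, $A_i$ then contains every sufficiently large element, since it is closed under adding all large multiples of $d$. The elements of $A_i$ below these thresholds are finitely many.
\end{itemize}
We collect, over all $i\le N$, the chosen representatives, the finitely many elements of $A_i$ below the thresholds, and a finite generating set of elements of $S$ realising generators of $D$. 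Right multiplication by the realising elements adds the corresponding offsets to the column index. So these finitely many elements, together with those of column index below $N$, generate all of $S$, which gives \ref{it:bi3}.

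I expect this last direction to be the main obstacle. The delicate bookkeeping is checking that the products really translate columns by offsets without changing the row; this is exactly why we split off the columns below $N$.
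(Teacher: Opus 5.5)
Your argument is correct. The paper gives no proof of this statement: it is quoted from \cite{DescalcoRuskuc2008}, where it rests on the detailed structural description of all subsemigroups of $\B$ from \cite{DescalcoRuskuc2005}. Your route is more self-contained.

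\begin{itemize}
\item In the diagonal, upper and lower cases you use only the homomorphism $(i,j)\mapsto j-i$ from $\B$ to $\mathbb{Z}$, the inverse anti-automorphism, and elementary facts about subsemigroups of $\mathbb{N}_0$.
\item Only the square case still leans on Proposition~\ref{prop:bisquare}, so there you are effectively citing the same structure theory.
\item What the original approach buys is an explicit description of every subsemigroup, not just a yes/no answer on finite generation. What yours buys is a short verification that is independent of that description outside the square case.
\end{itemize}

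Three small points of precision.

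\begin{itemize}
\item \textbf{Threshold in the upper case.} In a residue class $r$, the threshold must be $a_r+c$, where $a_r$ is your chosen representative and every multiple of $d$ that is at least $c$ lies in $D$. An element of $A_i$ lying merely beyond the point from which $A_i$ contains the whole class need not lie in $a_r+D$; if $a_r$ is chosen large, it may even be smaller than $a_r$. So it is the elements of $A_i$ below $a_r+c$ that must go into the generating set. Your justification (closure under adding large multiples of $d$) naturally gives this threshold, but you should say so explicitly.
\item \textbf{One representative per row.} Since $j-i\in D\subseteq d\mathbb{N}_0$ for every $(i,j)\in S$, each $A_i$ lies in the single residue class of $i$ modulo $d$. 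One representative per row therefore suffices.
\item \textbf{Diagonal case.} A diagonal $S$ is also upper and lower, so \ref{it:bi3} and \ref{it:bi4} formally apply to it. They reduce to finiteness, because each row and each column contains exactly one diagonal element.
\end{itemize}
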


\begin{rmk}
Parts \ref{it:bi3} and \ref{it:bi4} above correct a couple of obvious errors in the original  statement.
\end{rmk}

\begin{cor}
\label{cor:biHowson}
The bicyclic semigroup $\B$ has the semigroup Howson property.
\end{cor}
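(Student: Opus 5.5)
Let $S$ and $T$ be finitely generated subsemigroups of $\B$ and put $U=S\cap T$. The plan is to run through the four cases of Proposition~\ref{prop:bifinite} for $S$ and $T$ and show that $U$ again satisfies one of \ref{it:bi1}--\ref{it:bi4}. The guiding observation is that the types are inherited downward. If $S$ is diagonal, upper or lower, then so is $U$. Moreover, the sets $\{i : R_i\cap U\neq\emptyset\}$ and $\{i : L_i\cap U\neq\emptyset\}$ are contained in the corresponding sets for $S$ and for $T$.

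\emph{Case 1: $U$ is square.} Then $U$ is finitely generated by Proposition~\ref{prop:bifinite}\ref{it:bi2}, and we are done.

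\emph{Case 2: $S$ or $T$ is of type \ref{it:bi1}.} Say $S$ is a finite diagonal subsemigroup. Then $U\subseteq S$ is finite, hence finitely generated (possibly empty).

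\emph{Case 3: $S$ or $T$ is of type \ref{it:bi3}.} Say $S$ is upper with only finitely many rows met. Then $U$ is upper, and $\{i : R_i\cap U\neq\emptyset\}$ is finite. If $U$ contains an element strictly above the diagonal, then $U$ is an upper subsemigroup meeting finitely many rows, so it is of type \ref{it:bi3}. Otherwise $U$ is diagonal and meets finitely many rows; a diagonal element $(i,i)$ lies in $R_i$, so $U$ is finite, and it is of type \ref{it:bi1}. The case of type \ref{it:bi4} is dual, with columns in place of rows.

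\emph{Case 4: both $S$ and $T$ are square and $U$ is not.} This is the only case needing more than bookkeeping, and I expect it to be the main obstacle. Here $U$ is diagonal, upper or lower, and we must show it meets only finitely many rows or columns (respectively), or in the diagonal case that it is finite. The first approach I would try uses Proposition~\ref{prop:bisquare}: write $S$ and $T$ as finite unions of copies of $\B$ and subsemigroups of $\mathbb{N}_0$. A copy of $\mathbb{N}_0$ sitting inside $\B$ is a monogenic-type subsemigroup lying in a single $\mathscr{H}$-class pattern, such as $\{(i,i+kd)\}$. Intersections of pieces are then of two kinds. Intersections involving $\mathbb{N}_0$-pieces are subsemigroups of $\mathbb{N}_0$ (or empty), and these are finitely generated by Proposition~\ref{prop:SitSiu}. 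Intersections of two copies of $\B$ should be analysed via the known explicit form of copies of $\B$ in $\B$: they are determined by arithmetic progressions, so their intersection is again a copy of $\B$, a finite set, or empty.

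The difficulty is that a finite union of finitely generated subsemigroups need not be closed under multiplication. So rather than applying Proposition~\ref{prop:unionfg} naively, I would argue as follows. Since $U$ is not square, $U$ cannot contain a copy of $\B$, because every copy of $\B$ is square. Hence every pairwise intersection of pieces meets only finitely many rows and columns, or lies in an $\mathbb{N}_0$-piece. Each $\mathbb{N}_0$-piece is confined to one row or one column pattern; more precisely, it is contained in finitely many rows if it is upper-type in this representation. If that does not hold, I would fall back on a direct argument: if an upper subsemigroup $U\subseteq S\cap T$ met infinitely many rows, then taking products of elements of $U$ with suitable elements of $S$ and $T$ produces an element below the diagonal in $U$, a contradiction. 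With finiteness of rows (or columns) established, Proposition~\ref{prop:bifinite} gives that $U$ is finitely generated, completing the argument.
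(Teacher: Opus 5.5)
Your Cases 1--3 are fine and match the first paragraph of the paper's proof. Case 4, however, is where all the content lies, and there the proposal does not reach a proof. Three things go wrong.

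First, your worry about Proposition~\ref{prop:unionfg} is unfounded. If $S=\bigcup_i P_i$ and $T=\bigcup_j Q_j$ are the decompositions from Proposition~\ref{prop:bisquare}, then $U=\bigcup_{i,j}(P_i\cap Q_j)$. Each $P_i\cap Q_j$ is a subsemigroup, and $U$ is already a semigroup. So $U$ is finitely generated as soon as each $P_i\cap Q_j$ is, which is exactly how the paper uses that proposition.

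Second, the claim that the intersection of two copies of $\B$ is a copy of $\B$, finite, or empty is only asserted (``should be analysed via the known explicit form''). Nothing available to you supplies that explicit form, so this would need its own proof.

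Third, the fallback argument cannot work. The elements $(i,j)$ with $i\le j$ form a subsemigroup of $\B$, so no product of elements of the upper semigroup $U$ ever goes below the diagonal. A product using a factor from $S\setminus T$ or $T\setminus S$ is not known to lie in $U$ at all. The confinement of the $\mathbb{N}_0$-pieces to single rows or columns is likewise left hedged.

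The missing idea is the one the paper uses. A copy of $\B$ inside $\B$ is an inverse semigroup, so by uniqueness of inverses in $\B$ it is closed under $(p,q)\mapsto(q,p)$. Hence if some $B_i\cap B_j'$ contains a non-idempotent $(p,q)$, it also contains $(q,p)$, and $U$ is square. If it contains an idempotent $(p,p)$, choose $(p,r)\in B_i$ and $(p,s)\in B_j'$ with $r,s>p$. This is possible because the $\mathscr{R}$-class of $(p,p)$ in a copy of $\B$ is an infinite subset of $R_p$. Then $(p,r)^{s-p}=(p,p+(r-p)(s-p))=(p,s)^{r-p}$ is a non-idempotent common element.

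So either $U$ is square, or all $B_i\cap B_j'$ are empty. In the latter case $U$ is a finite union of subsemigroups of copies of $\mathbb{N}_0$, and is finitely generated by Propositions~\ref{prop:SitSiu} and~\ref{prop:unionfg}. This needs no classification of copies of $\B$ and no row/column bookkeeping. In particular, in your Case 4 the ``finite'' intersections never occur, since every non-empty $B_i\cap B_j'$ forces $U$ to be square.
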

\begin{proof}
Let $S$ and $T$ be finitely generated subsemigroups of $\B$. If either $S$ or $T$ satisfies \ref{it:bi1}, \ref{it:bi3} or \ref{it:bi4} of Proposition~\ref{prop:bifinite}, then so does the intersection $S\cap T$ and the result follows. 

Suppose now that both $S$ and $T$ are square subsemigroups. By Proposition~\ref{prop:bisquare}, we may write 
\[
S = \Biggl(\bigcup_{i=1}^n B_i\Biggr) \cup \Biggl(\bigcup_{i=1}^m N_i\Biggr)\quad \text{and}\quad T = \Biggl(\bigcup_{i=1}^k B_i'\Biggr)\cup\Biggl(\bigcup_{i=1}^l N_i'\Biggr),
\]
where all $B_i$, $B_i'$ are copies of $\B$, and all $N_i$, $N_i'$ are subsemigroups of $\mathbb{N}_0$. 

If each intersection $B_i\cap B_j'$ is empty, then $S\cap T$ is a finite union of subsemigroups of $\mathbb{N}_0$ and hence
is finitely generated by Propositions \ref{prop:unionfg} and \ref{prop:SitSiu}. So, suppose that $B_i\cap B_j'\neq\emptyset$ for some $i,j$, and let $(p,q)\in B_i\cap B_j'$. Without loss of generality assume that $p\leq q$. In fact, we claim we can assume without loss of generality that $p<q$. Indeed, if $p=q$,
there exist $r,s>p$ such that $(p,r)\in B_i$ and $(p,s)\in B_j'$, and we can take the element
\[
\bigl(p,p+(r-p)(s-p)\bigr)=(p,r)^{s-p}=(p,s)^{r-p}\in B_i\cap B_j',
\]
instead of $(p,q)$. Now, since $(p,q)\in B_i\cap B_j'$ and both $B_i$ and $B_j'$ are copies of $\B$, it follows that also $(q,p)\in B_i\cap B_j'$. But this in turn means that $S\cap T$ is square, and hence finitely generated 
by Proposition~\ref{prop:bifinite} \ref{it:bi2}. This completes the proof.
\end{proof}

To enable us to deal with finite extensions, we also need the following:

\begin{prop}
\label{prop:HowsonFiniteComplement}
Let $S$ be a semigroup and let $T$ be a subsemigroup of $S$ such that $S\setminus T$ is finite. Then, $S$ has the semigroup Howson property if and only if $T$ does.
\end{prop}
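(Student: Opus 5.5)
The plan is to prove both implications directly, using Proposition~\ref{prop:FiniteGenerationLarge} (the large-subsemigroup result) as the main tool.

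For the direction "$S$ Howson $\Rightarrow$ $T$ Howson", I would simply invoke Proposition~\ref{prop:subHowson}. Since $T$ is a subsemigroup of $S$, it inherits the semigroup Howson property; finiteness of $S\setminus T$ is not needed here.

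For the converse, assume $T$ has the semigroup Howson property. Let $A,B$ be finitely generated subsemigroups of $S$. The key step is to show that $A\cap T$ is finitely generated. It is a subsemigroup of $A$, being an intersection of subsemigroups, and $A\setminus(A\cap T)\subseteq S\setminus T$ is finite. So Proposition~\ref{prop:FiniteGenerationLarge} applied to $A\cap T\le A$ gives that $A\cap T$ is finitely generated. If $A\cap T$ is empty, it is finitely generated by convention (then $A$ itself is finite). Similarly $B\cap T$ is finitely generated.

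Both $A\cap T$ and $B\cap T$ are finitely generated subsemigroups of $T$. By the Howson property of $T$, their intersection $A\cap B\cap T$ is finitely generated. Now $A\cap B\cap T$ is a subsemigroup of $A\cap B$ with complement contained in $S\setminus T$, hence finite. Applying Proposition~\ref{prop:FiniteGenerationLarge} once more, this time in the "if" direction, shows that $A\cap B$ is finitely generated.

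The only delicate point is the empty-set convention in these applications of Proposition~\ref{prop:FiniteGenerationLarge}. If $A\cap B\cap T=\emptyset$, then $A\cap B$ is finite, hence finitely generated. If $A\cap T=\emptyset$, then $A$ is finite, so $A\cap B$ is finite. I expect no real obstacle beyond checking these degenerate cases.
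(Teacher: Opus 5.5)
Your proposal is correct and follows essentially the same argument as the paper. The forward direction uses Proposition~\ref{prop:subHowson}, and the converse applies Proposition~\ref{prop:FiniteGenerationLarge} to $A\cap T\le A$, then the Howson property of $T$, then Proposition~\ref{prop:FiniteGenerationLarge} again to $A\cap B\cap T\le A\cap B$. Your explicit handling of the empty-set cases is a harmless extra check that the paper leaves implicit.
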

\begin{proof}
The forward direction follows from Proposition \ref{prop:subHowson}. For the converse direction, let $U_1$ and $U_2$ be finitely generated subsemigroups of $S$. For each $i$, since $U_i = (U_i\cap T)\cup (U_i\cap (S\setminus T))$ and $U_i\cap (S\setminus T)$ is finite, $U_i\cap T$ is finitely generated by Proposition~\ref{prop:FiniteGenerationLarge}. Thus, $U_1\cap U_2\cap T$ is finitely generated by assumption. 
Once again, by Proposition~\ref{prop:FiniteGenerationLarge}, 
$U_1\cap U_2$ is finitely generated since $U_1\cap U_2\cap (S\setminus T)$ is finite. This completes the proof.
\end{proof}

\begin{thm}
\label{thm:HonwsonNonFree}
Every non-free monogenic inverse semigroup has the semigroup Howson property.
\end{thm}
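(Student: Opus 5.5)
We use the classification in Theorem~\ref{thm:MonoClass1}: a non-free monogenic inverse semigroup $S$ is of type \ref{it:corclass2}, \ref{it:corclass3} or \ref{it:corclass4}. We handle each type separately, reducing in each case to the ideal $I$ via Proposition~\ref{prop:HowsonFiniteComplement}.

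\emph{Type~\ref{it:corclass2}.} If $S$ is finite, every subsemigroup is finite and hence finitely generated. In particular, the intersection of any two subsemigroups is finitely generated, and $S$ has the semigroup Howson property.

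\emph{Type~\ref{it:corclass3}.} Here $S$ contains an ideal $I\cong\B$ with $S\setminus I$ finite. Since $I$ is an ideal, it is in particular a subsemigroup with finite complement. By Corollary~\ref{cor:biHowson}, $I$ has the semigroup Howson property, so Proposition~\ref{prop:HowsonFiniteComplement} transfers the property to $S$.

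\emph{Type~\ref{it:corclass4}.} Here $S$ contains an ideal $I\cong\mathbb{Z}$ with $S\setminus I$ finite. Again by Proposition~\ref{prop:HowsonFiniteComplement}, it suffices to show that the additive group $\mathbb{Z}$ has the semigroup Howson property. This is the only step needing a genuine argument.

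To treat $\mathbb{Z}$, let $U\le\mathbb{Z}$ be a subsemigroup and split into cases.
\begin{itemize}
\item If $U$ contains both a positive and a negative element, then $U$ is a subgroup. Indeed, if $p>0>q$ lie in $U$, then $0=(-q)p+pq\in U$. Moreover, for $u\in U$ with $u>0$, we have $-u\in U$: taking $n>0$ with $nq+(n-1)\cdot$ suitable multiples, more simply $(-q)u\cdot$ arguments show $-u=u(-q-1)\cdot$\ldots. The cleaner route is that $U$ contains $pq$-type elements of both signs, so $U\cap\mathbb{N}_0$ and $U\cap(-\mathbb{N}_0)$ generate a subgroup $d\mathbb{Z}$ that $U$ contains up to finitely many gaps, and then one shows $U=d\mathbb{Z}$. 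In either form, $U$ is a subgroup, and every subgroup of $\mathbb{Z}$ is cyclic and so finitely generated.
\item Otherwise $U\subseteq\mathbb{N}_0$ or $U\subseteq-\mathbb{N}_0$, and Proposition~\ref{prop:SitSiu} shows $U$ is finitely generated.
\end{itemize}
Thus every subsemigroup of $\mathbb{Z}$ is finitely generated, so $\mathbb{Z}$ trivially has the semigroup Howson property. Proposition~\ref{prop:HowsonFiniteComplement} then finishes this case.

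The main obstacle is verifying that every subsemigroup of $\mathbb{Z}$ meeting both signs is a subgroup. Here is a direct argument. Let $g=\gcd(U)$. The subsemigroup generated by finitely many positive and negative elements of $U$ whose gcd is $g$ contains $0$. It also contains all sufficiently large positive multiples of $g$, and likewise all sufficiently large negative multiples of $g$, by the Frobenius coin argument. Then adding a large positive multiple of $g$ to a large negative one yields every multiple of $g$, so $U=g\mathbb{Z}$.
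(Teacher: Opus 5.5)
Your proof is correct and is essentially the paper's argument: the classification, the trivial finite case, Corollary~\ref{cor:biHowson} for $\B$, the fact that every subsemigroup of $\mathbb{Z}$ is finitely generated, and then Proposition~\ref{prop:HowsonFiniteComplement} to pass from the ideal to $S$. The paper handles $\mathbb{Z}$ in one line, noting that its subsemigroups are subgroups or lie in $\pm\mathbb{N}_0$.

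Your $\mathbb{Z}$ sub-argument needs tidying, though its conclusion is right. The first bullet is garbled. In the final paragraph, applying the coin argument to the positive elements of your finite set only yields large multiples of their own gcd, which can exceed $g$ (e.g.\ $\{2,-3\}$). The simplest fix is to take $p>0>q$ in $U$. Then $0=|q|p+pq$, and for $u>0$ in $U$ you have $-u=(|q|-1)u+uq$, with the symmetric identity $-u=(p-1)u+|u|p$ for $u<0$. Each right-hand side is a sum of elements of $U$, so $U$ is a subgroup directly.
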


\begin{proof}
Let $S$ be a non-free monogenic inverse semigroup. 
By Theorem~\ref{thm:MonoClass1}, $S$ is either finite or a finite extension of $\B$ or $\mathbb{Z}$. The finite case is trivial. Corollary~\ref{cor:biHowson} shows that $\B$ is Howson, while $\mathbb{Z}$ is Howson since its subsemigroups are either isomorphic to $\mathbb{Z}$ or are subsemigroups of $\mathbb{N}_0$. The result now follows from Proposition~\ref{prop:HowsonFiniteComplement}.
\end{proof}

\section{Preliminaries II: the monogenic free inverse semigroup}
\label{sec:FI1Prelim}
Our next task is to show that the monogenic free inverse semigroup $\FI_1$ does not have the semigroup Howson property. To prepare the ground, in this section we introduce preliminary material on $\FI_1$, which is then used to establish the result in Example \ref{ex:HowsonCounter1} in the next section, which in turn completes the proof of Theorem \ref{thmA:Howson}.

Munn~\cite{Munn} showed that every element of a free inverse semigroup can be uniquely associated with an edge-labelled birooted directed tree called a \emph{Munn tree}. 
In the monogenic case, this description reduces to just a non-empty finite directed path 
with two of its vertices designated as the initial and terminal vertices. This can be visualised as follows: 

\begin{center}
\begin{tikzpicture}[main/.style = {draw, circle, inner sep = 1.5pt, minimum size=1pt}]
\node[main] (1) {};
\node[main] (2) [right = 0.8cm of 1] {};
\node[main] (3) [right = 2cm of 1] {};
\node[main, fill = black] (4) [right= 0.8cm of 3] {};
\node[main] (5) [right = 0.8cm of 4] {};
\node[main] (6) [right = 1.2cm of 5] {};
\node[main, fill=black] (7) [right = 0.8cm of 6] {};
\node[main] (8) [right = 0.8cm of 7] {};
\node[main] (9) [right = 1.2 of 8] {};
\node[main] (10) [right = 0.8cm of 9] {};	
\node at ($(2)!.5!(3)$) {\ldots};
\node at ($(5)!.5!(6)$) {\ldots};
\node at ($(8)!.5!(9)$) {\ldots};	

\draw[->] (1) -- (2);

\draw[->] (3) -- (4);	

\draw[->] (4) -- (5);		

\draw[->] (6) -- (7);

\draw[->] (7) -- (8);

\draw[->] (9) -- (10);
\draw[->] ($(4)-(60:5.5mm)$) -- ($(4)-(60:1mm)$);
\draw[->] (7) -- ($(7)+(60:5.5mm)$);
\draw [decorate,decoration={brace,amplitude=5pt,raise=5pt}] (1) -- (4) node[midway,above,yshift=10pt]{$a$};
\draw [decorate,decoration={brace,amplitude=5pt,raise=5pt}] (4) -- (7) node[midway,above,yshift=10pt]{$\vert x \vert$};
\draw [decorate,decoration={brace,amplitude=5pt,mirror, raise=5pt}] (4) -- (10) node[midway,above,yshift=-25pt]{$b$};
\end{tikzpicture}
\end{center}
The vertices with in- and out-arrows are the initial and terminal vertices, respectively. It is permitted for the terminal vertex to be to the left of the initial vertex or for the two of them to coincide. The label of an edge in the direction (resp.\ opposite direction) of the arrow is the generator (resp.\ the inverse of the generator) of $\FI_1$. It is customary to omit the labels for the monogenic case.

The product of two Munn trees $M_1$ and $M_2$ of elements of $\FI_1$ is obtained as follows. We overlay $M_1$ and $M_2$ by identifying the terminal vertex of $M_1$ with the initial vertex of $M_2$ and further identify overlapped vertices and edges which arise as a consequence. The initial and terminal vertices of the product are the initial vertex of $M_1$ and the terminal vertex of $M_2$, respectively.

The Munn tree above suggests a parameterisation of elements of $\FI_1$ into triples. Specifically, we view $\FI_1$ as 
\[
\FI_1 := \{(-a, x, b)\in \mathbb{Z}^3 : a,b\geq 0, \,
a+b>0,\, -a\leq x\leq b\}
\]
with multiplication
\[
(-a_1, x_1, b_1)(-a_2, x_2, b_2) = (-\max(a_1, a_2-x_1), \, x_1+x_2,\, \max(b_1, b_2+x_1)).
\]
The inverse of a typical triple $(-a, x, b)$ is $(-(a+x), -x, b-x)$. The triple $(0, 1, 1)$ 
(freely) generates $\FI_1$ as an inverse semigroup.

Adjoining the identity to $\FI_1$ yields the \emph{monogenic free inverse monoid} $\FIM_1$. We may regard the adjoined identity as the triple $(0, 0, 0)$, the Munn tree consisting of a single vertex with no edge.

We review Green's relations for $\FI_1$: 
\begin{align*}
    (-a, x, b) \mathscr{R} (-a', x', b') &\Leftrightarrow a = a' \text{ and } b = b';\\
    (-a, x, b) \mathscr{L} (-a', x', b') &\Leftrightarrow a + x = a' + x' \text{ and } b - x = b' - x';\\
    (-a, x, b) \mathscr{D} (-a', x', b') &\Leftrightarrow a+b = a'+b';\\
    (-a, x, b) \mathscr{H} (-a', x', b') &\Leftrightarrow (-a, x, b) = (-a', x', b').
\end{align*}

The $\mathscr{D}$-classes of $\FI_1$, indexed by natural numbers, are
\[
D_n := \{(-a, x, b)\in\FI_1 : a+b=n\} \quad (n\in\mathbb{N}).
\]
For each $n\in\mathbb{N}$, the union $I_n := \bigcup_{i\geq n}D_i$ forms an ideal of $\FI_1$.

Consider an arbitrary element $(-a, x, b)\in \FI_1$. Observe that this element is an idempotent if and only if $x=0$. We say that this element is \emph{positive} if $x>0$ and \emph{negative} if $x<0$. In terms of Munn trees, idempotents are precisely those whose inital and terminal vertices coincide; positive (resp.\ negative) elements are those whose terminal vertices are on the right (resp.\ left) hand side of the initial vertices. We often write $(-a, x, x+b)$ when we wish to emphasise that an element is positive; similarly, $(-(a+x), -x, b)$ for negative.

We define some subsets of $\FI_1$ that will be used in the remainder of the paper: 
\begin{align*}
    &\E := \{(-a, x, b)\in\FI_1: x=0\};\\
    &\P := \{(-a, x, b)\in\FI_1: x>0\}; &&\N := \{(-a, x, b)\in\FI_1: x<0\};\\
    &\PE := \P\cup \E; &&\NE := \N\cup\E.
\end{align*}
The last four
are further decomposed into subsets 
parametrised by $a,b\in\mathbb{N}_0$ as follows:
\begin{align*}
    &\P_{a,b} := \{(-a, x, x+b) \in \FI_1 : x>0\};&&\N_{a,b} := \{(-(a+x), -x, b) \in\FI_1 : x>0\};\\
    &\PE_{a,b} := \{(-a, x, x+b)\in \FI_1 : x\geq 0\};&
    &\NE_{a,b} := \{(-(a+x), -x, b)\in\FI_1: x\geq 0\}.
\end{align*}

Note that, as $(0, 0, 0)\notin \FI_1$, we have 
\[
\P_{0,0} = \PE_{0,0} = \{(0, x, x): x>0\} \quad\text{and}\quad \N_{0, 0} = \NE_{0,0} = \{(-x, -x, 0): x>0\}.
\]

For any subset $L\subseteq \FI_1$, we define:
\begin{align*}
    \P(L) := L\cap \P; \qquad \N(L) := L\cap \N; \qquad
    \PE(L) := L\cap \PE; \qquad \NE(L) := L\cap \NE.
\end{align*}

The set $\E$ of idempotents of $\FI_1$ is partially ordered by 
\begin{equation}
\label{eq:pord}
(-a, 0, b)\leq (-c, 0, d)\ \Leftrightarrow\ a\geq c\text{ and } b\geq d,
\end{equation}
and as such is isomorphic to $\mathbb{N}_0\times\mathbb{N}_0\setminus\{(0,0)\}$ under the component-wise $\geq$. 
This order coincides with the natural partial order on the idempotents (see \cite[Chapter 1]{Lawson}) but we will not need that fact.

We define three functions: 
\begin{align*}
\lambda \colon \FI_1 &\to \mathbb{N}_0, &\mu \colon \FI_1 &\to \mathbb{Z}, &\rho \colon \FI_1 &\to \mathbb{N}_0,\\
(-a, x, b) &\mapsto a, &(-a, x, b) &\mapsto x, &(-a, x, b) &\mapsto b. 
\end{align*}
Observe that $\mu$ is the natural homomorphism from $\FI_1$ to its maximal group image $\mathbb{Z}$.

Let $S$ be a subsemigroup of $\FI_1$. We say that $S$ is \emph{one-sided} if $S\subseteq \PE$ or $S\subseteq\NE$. We say that $S$ is \emph{two-sided} if both $\P(S)$ and $\N(S)$ are non-empty. Equivalently, $S$ is one-sided if and only if $S\mu$ is a subsemigroup of $\mathbb{N}_0$ or of $-\mathbb{N}_0$; $S$ is two-sided if and only if $S\mu = p\mathbb{Z}$ for some $p\in\mathbb{N}$.

We record the following results that will be used throughout the remainder of this paper.

\begin{prop}[{\cite[Lemmas 4.5 and 4.7]{ChoRuskuc2}}]
\label{prop:ChoRuskuc}
With the notation above, the following hold:
\begin{thmenumerate}
    \item \label{it:CR1} $\P = \bigsqcup_{a,b\in\mathbb{N}_0}\P_{a,b}$ is the disjoint union of its subsemigroups $\P_{a,b}$, each of which is isomorphic to $\mathbb{N}$;
    \item \label{it:CR2} any finitely generated subsemigroup of $\P$ has a non-empty intersection with only finitely many $\P_{a,b}$. \hfill\qed
\end{thmenumerate}
\end{prop}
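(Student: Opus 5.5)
Part \ref{it:CR1} is a direct computation with the multiplication formula. I will first record that $\P$ is the disjoint union of the sets $\P_{a,b}$. Indeed, every positive element $(-a,x,b')$ with $x>0$ satisfies $b'\geq x$, so writing $b'=x+b$ with $b\geq 0$ places it in exactly one $\P_{a,b}$. Next I check that each $\P_{a,b}$ is closed under products. For $(-a,x,x+b)$ and $(-a,y,y+b)$ with $x,y>0$, the product is
\[
(-\max(a,a-x),\,x+y,\,\max(x+b,\,y+b+x)) = (-a,\,x+y,\,x+y+b).
\]
This holds because $a-x<a$ and $y+b+x> x+b$. Hence the map $\P_{a,b}\to\mathbb{N}$, $(-a,x,x+b)\mapsto x$, is a homomorphism. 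It is bijective, since $a,b$ are fixed and $x$ ranges over $\mathbb{N}$. The condition $a+b>0$ never fails here, because $x>0$ already makes the sum of the outer coordinates positive. This gives \ref{it:CR1}.

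For \ref{it:CR2}, let $S=\langle s_1,\dots,s_k\rangle\subseteq\P$, and write $s_i=(-a_i,x_i,x_i+b_i)$. I will show that every product of generators has $\lambda$-value in $\{a_1,\dots,a_k\}$ and $\rho-\mu$ value in $\{b_1,\dots,b_k\}$. This leaves at most $k^2$ possible pairs $(a,b)$.

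Take two positive elements $(-a_1,x_1,x_1+b_1)$ and $(-a_2,x_2,x_2+b_2)$. Their product has first coordinate $-\max(a_1,a_2-x_1)$ and third coordinate $\max(x_1+b_1,\,x_1+x_2+b_2)$. The new $b$-value is therefore $\max(b_1-x_2,\,b_2)$, and the new $a$-value is $\max(a_1,a_2-x_1)$. These need not lie in the original sets. So the intended invariant is rather that both values are bounded: $a\le\max_i a_i$ and $b\le\max_i b_i$. This follows by induction on word length, since each coordinate of a product is a maximum of an old value and a decreased old value. Since both parameters take values in $\mathbb{N}_0$, only finitely many pairs $(a,b)$ occur, which proves \ref{it:CR2}.

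The main obstacle is choosing the correct invariant. The naive guess that the parameters stay in the generating set fails, and the right statement is boundedness. Once the formulae for the new $a$- and $b$-values are written down, the induction is routine.
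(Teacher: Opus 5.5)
Your proof is correct. For \ref{it:CR1}, the identity $(-a,x,x+b)(-a,y,y+b)=(-a,x+y,x+y+b)$ is exactly what is needed. For \ref{it:CR2}, your formulas for the new parameters, $\max(a_1,a_2-x_1)$ and $\max(b_1-x_2,b_2)$, are right. Since $x_1,x_2>0$, they give $a\le\max_i a_i$ and $b\le\max_i b_i$ for every element of $S$.

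The paper does not prove this proposition itself. It imports both parts from \cite{ChoRuskuc2} (Lemmas 4.5 and 4.7 there) and states them without argument. Your direct computation is therefore a self-contained substitute for that citation, not a competing route through the paper's own reasoning.

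Two presentational points:
\begin{itemize}
\item Your treatment of \ref{it:CR2} opens by announcing an invariant that you then correctly retract: the parameters do not stay in $\{a_1,\dots,a_k\}$ and $\{b_1,\dots,b_k\}$. For example, $(0,1,1)(-5,1,1)=(-4,2,2)$ has $a$-value $4$. In a final write-up, state only the boundedness claim.
\item The induction can be packaged more cleanly. Put $A=\max_i a_i$ and $B=\max_i b_i$. By your two formulas, $\bigcup_{a\le A,\,b\le B}\P_{a,b}$ is itself a subsemigroup of $\P$. It contains all the generators, hence contains $S$.
\end{itemize}
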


A straightforward modification of \ref{it:CR1} holds for $\PE$: $\PE = \bigsqcup_{a,b\in\mathbb{N}_0}\PE_{a,b}$ is the disjoint union of its subsemigroups $\PE_{a,b}$, and each $\PE_{a,b}$ is isomorphic to $\mathbb{N}_0$ when $a+b>0$ and to $\mathbb{N}$ when $a=b=0$.

\begin{prop}[{\cite[Corollary 2.5]{Reilly}}]
\label{prop:Reilly}
Let $u\in\FI_1$ be non-idempotent. Then, the inverse subsemigroup $\langle u, u^{-1}\rangle$ of $\FI_1$ is isomorphic to $\FI_1$. \hfill\qed
\end{prop}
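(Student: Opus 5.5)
The plan is to write down an explicit embedding of $\FI_1$ onto $\langle u,u^{-1}\rangle$ and to check it directly with the triple multiplication formula.

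\textbf{Reduction.} Let $u=(-a,x,b)$ with $x\neq 0$. Since $\langle u,u^{-1}\rangle=\langle u^{-1},u\rangle$, and $u^{-1}$ is non-idempotent with $\mu(u^{-1})=-x$, we may assume $x>0$. Then $u\in\P$, so $0<x\le b$.

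\textbf{The map.} Define $\varphi\colon\FI_1\to\FI_1$ by
\[
(-p,m,q)\varphi=\bigl(-(px+a),\, mx,\, (q-1)x+b\bigr).
\]
This formula comes from the Munn tree picture. An element $(-p,m,q)$ is a path covering the edges $[k,k+1]$ for $k\in[-p,q-1]$. Replacing each edge by a translate of the tree of $u$ starting at $kx$ covers $[kx-a,\,kx+b]$. Consecutive translates overlap because $x\le b$, so the union is the interval $[-px-a,\,(q-1)x+b]$, and the endpoint becomes $mx$.

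\textbf{Well-defined.} We need the image to be a genuine triple. We have $px+a\ge 0$. Also $(q-1)x+b\ge 0$, because either $q\ge 1$, or $q=0$ forces $p\ge1$ and $m\le 0$, in which case $b-x\ge 0$. The bounds $-(px+a)\le mx\le (q-1)x+b$ follow from $-p\le m\le q$ together with $b\ge x$. Finally, the entries are not both zero, since $(px+a)+((q-1)x+b)\ge (p+q-1)x+b\ge b>0$.

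\textbf{Homomorphism.} Take $s_1=(-p_1,m_1,q_1)$ and $s_2=(-p_2,m_2,q_2)$. The product $s_1s_2$ has entries $-\max(p_1,p_2-m_1)$, $m_1+m_2$ and $\max(q_1,q_2+m_1)$. The maps $t\mapsto tx+a$ and $t\mapsto (t-1)x+b$ are increasing and affine. Hence the left entry of $(s_1s_2)\varphi$ is
\[
-\max(p_1x+a,\;(p_2x+a)-m_1x),
\]
and the right entry is
\[
\max\bigl((q_1-1)x+b,\;((q_2-1)x+b)+m_1x\bigr).
\]
These are exactly the entries of $(s_1\varphi)(s_2\varphi)$ given by the multiplication rule, and the middle entries agree trivially. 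So $\varphi$ is a homomorphism. The computation is routine; the main care is in the positivity checks of the previous step.

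\textbf{Image and injectivity.} We have $(0,1,1)\varphi=(-a,x,b)=u$. A semigroup homomorphism between inverse semigroups preserves inverses, so $\FI_1\varphi$ is the inverse subsemigroup generated by $u$, namely $\langle u,u^{-1}\rangle$. For injectivity, $x>0$ lets us recover $m$ from $mx$, $p$ from $px+a$, and $q$ from $(q-1)x+b$. Therefore $\varphi$ is an isomorphism $\FI_1\cong\langle u,u^{-1}\rangle$.
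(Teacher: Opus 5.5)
Your proof is correct, and it takes a different route from the paper. The paper gives no argument of its own here: it cites Reilly's Corollary~2.5, which holds for non-idempotent elements of free inverse semigroups of arbitrary rank.

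You instead build the isomorphism explicitly in the triple model. For $u=(-a,x,b)$ with $x>0$, your map is $(-p,m,q)\mapsto\bigl(-(px+a),\,mx,\,(q-1)x+b\bigr)$, and your checks all go through:
\begin{itemize}
\item The image lies in $\FI_1$. This uses $x\le b$ (needed when $q=0$ and for the upper bound $mx\le (q-1)x+b$) and $p+q\ge 1$ (for the entries not both being zero).
\item The map is a homomorphism. The maps $t\mapsto tx+a$ and $t\mapsto (t-1)x+b$ are increasing and affine, so they commute with the maxima in the multiplication rule and turn the shift by $m_1$ into a shift by $m_1x$.
\item The map is injective, since $x>0$.
\item Its image is $\langle u,u^{-1}\rangle$, because semigroup homomorphisms between inverse semigroups preserve inverses and $(0,1,1)\mapsto u$.
\end{itemize}

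The citation costs the paper nothing and covers every rank. Your argument is confined to rank one, but it is self-contained and elementary. It also gives an explicit normal form for $\langle u,u^{-1}\rangle$. For example, with $u=(0,2,2)$ your map is $(-p,m,q)\mapsto(-2p,2m,2q)$. This is exactly the description of the elements of $S$ asserted without proof in Example~\ref{ex:HowsonCounter1}.
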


This is actually stated in more generality in \cite{Reilly}, with $u$ coming from an arbitrary free inverse semigroup.

\section{The monogenic free inverse semigroup is not Howson}
\label{sec:FI1NonHowson}

For the remainder of the paper we solely consider intersections of finitely generated subsemigroups of $\FI_1$. 
From the preliminary discussion above, we can quickly identify certain pairs of subsemigroups of $\FI_1$ whose intersections are finitely generated.

\begin{prop}
\label{prop:FI1fgintersectpairs}
Let $S$ and $T$ be finitely generated subsemigroups of $\FI_1$. If one of the following holds, then $S\cap T$ is finitely generated:
\begin{thmenumerate}
    \item \label{it:FI1fgintersect1} both $S$ and $T$ are inverse;
    \item \label{it:FI1fgintersect2} at least one of $S$ or $T$ is one-sided.
\end{thmenumerate}
\end{prop}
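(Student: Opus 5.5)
For part \ref{it:FI1fgintersect1} I will simply transfer the problem to the inverse setting. Suppose $S$ and $T$ are inverse subsemigroups of $\FI_1$ that are finitely generated as plain semigroups.
\begin{thmenumerate}
\item By Proposition~\ref{prop:invplaingen}, $S$ and $T$ are finitely generated as inverse semigroups.
\item The intersection of two inverse subsemigroups is closed under products and inverses, so $S\cap T$ is an inverse subsemigroup (possibly empty, which counts as finitely generated).
\item By Theorem~\ref{thm:JonesTrotter}, $S\cap T$ is finitely generated as an inverse semigroup.
\item Applying Proposition~\ref{prop:invplaingen} once more, $S\cap T$ is finitely generated as a plain semigroup.
\end{thmenumerate}

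For part \ref{it:FI1fgintersect2}, first reduce to the case $S\subseteq\PE$. The map $u\mapsto u^{-1}$ is an anti-automorphism of $\FI_1$ and sends $\PE$ onto $\NE$, since it negates $\mu$. Anti-automorphisms preserve subsemigroups, intersections and finite generation. So if $S\subseteq \NE$ we may replace $S,T$ by $S^{-1},T^{-1}$. The case where $T$ is the one-sided semigroup is symmetric. So assume $S\subseteq\PE$ is generated by a finite set $X$.

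The key step is to show that $S$ meets only finitely many of the pieces $\PE_{a,b}$. This is the analogue of Proposition~\ref{prop:ChoRuskuc}\ref{it:CR2}, and I will prove it directly. Write a non-negative element as $(-a,x,x+b)$ with $x\ge0$ and $a,b\ge 0$. For two such elements $(-a_1,x_1,x_1+b_1)$ and $(-a_2,x_2,x_2+b_2)$, the multiplication formula gives a product with
\begin{itemize}
\item first parameter $\max(a_1,a_2-x_1)\le\max(a_1,a_2)$, since $x_1\ge 0$;
\item last coordinate minus middle coordinate equal to
\[
\max(x_1+b_1,\,x_1+x_2+b_2)-x_1-x_2=\max(b_1-x_2,\,b_2)\le\max(b_1,b_2),
\]
since $x_2\ge 0$.
\end{itemize}
By induction on the length of a product of generators, every element of $S$ lies in $\PE_{a,b}$ with $a\le A$ and $b\le B$. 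Here $A$ and $B$ are the maxima of the corresponding parameters over the finite set $X$.

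Consequently $S\cap T=\bigcup_{a\le A,\,b\le B}(S\cap T\cap \PE_{a,b})$ is a finite union. Each piece $S\cap T\cap\PE_{a,b}$ is either empty or a subsemigroup of $\PE_{a,b}$, which is isomorphic to $\mathbb{N}_0$ or $\mathbb{N}$ (see the remark after Proposition~\ref{prop:ChoRuskuc}). Each piece is therefore finitely generated by Proposition~\ref{prop:SitSiu}. Finally, Proposition~\ref{prop:unionfg} shows that $S\cap T$ is finitely generated. This argument uses only the finite generation of $S$; $T$ may be an arbitrary subsemigroup.

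The only real computation is the bound on the parameters under multiplication. That bound relies precisely on the sign condition $x_1,x_2\ge0$, so this is where one-sidedness is used.
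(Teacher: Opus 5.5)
Your proof is correct. Part (i) is exactly the paper's argument, but for part (ii) you take a genuinely different and more elementary route.

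The paper splits $S$ into the ideal $\P(S)$ and the finite semilattice $E(S)=\langle X\setminus \P(S)\rangle$. It then proceeds in four steps:
\begin{itemize}
\item it uses Proposition~\ref{prop:FiniteGenerationLarge} to see that $\P(S)$ is finitely generated;
\item it quotes the external Cho--Ru\v{s}kuc result, Proposition~\ref{prop:ChoRuskuc}\ref{it:CR2}, to conclude that $\P(S\cap T)$ meets only finitely many $\P_{a,b}$;
\item it applies Proposition~\ref{prop:SitSiu} to these finitely many pieces;
\item it invokes Proposition~\ref{prop:FiniteGenerationLarge} a second time to add back the finitely many idempotents.
\end{itemize}

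You instead check by hand that, on $\PE$, the parameters $\lambda$ and $\rho-\mu$ of a product are bounded by the maxima of those of the factors. This gives the explicit containment $\langle X\rangle\subseteq\bigcup_{a\le A,\,b\le B}\PE_{a,b}$. It also lets you handle idempotents and positive elements uniformly through the pieces $\PE_{a,b}\cong\mathbb{N}_0$ (or $\mathbb{N}$), so you need only Propositions~\ref{prop:SitSiu} and~\ref{prop:unionfg}.

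Your version buys the following: it is self-contained, it avoids both the finite-complement theorem and the cited Cho--Ru\v{s}kuc lemma (in effect reproving the latter), and it yields explicit bounds computed from the generators. The same computation would also give the forward direction of Theorem~\ref{thm:1side} directly.

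The paper's version buys brevity, given the cited machinery. It also mirrors the ideal-plus-finite-semilattice decomposition that it reuses in the proof of Theorem~\ref{thm:1side}.

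Your closing remark that only the one-sided factor needs to be finitely generated applies equally to the paper's argument.
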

\begin{proof}
If \ref{it:FI1fgintersect1} holds, then the result follows from the inverse semigroup Howson property of $\FI_1$ (Theorem~\ref{thm:JonesTrotter}) and Proposition \ref{prop:invplaingen}.
Suppose \ref{it:FI1fgintersect2} holds. Without loss of generality, assume $S\subseteq \PE$ and $S = \langle X \rangle$ for some finite $X\subseteq \PE$. Note that $\P(S)$ is an ideal of $S$ and $E(S) = S\setminus\P(S) =\langle X\setminus\P(S)\rangle \leq \E$ is finite. Hence, $\P(S)$ is finitely generated by Proposition~\ref{prop:FiniteGenerationLarge}. By Proposition~\ref{prop:ChoRuskuc}, $\P(S)$ has a non-empty intersection with only finitely many $\P_{a,b}$, and hence so does $\P(S)\cap \P(T) = \P(S\cap T)$. Thus, $\P(S\cap T)$ is finitely generated by Propositions~\ref{prop:ChoRuskuc} and \ref{prop:SitSiu}. Since $(S\cap T)\setminus\P(S\cap T) \subseteq E(S)$ is finite, $S\cap T$ is finitely generated by Proposition~\ref{prop:FiniteGenerationLarge}.
\end{proof}

We now present an example demonstrating that $\FI_1$ does not have the semigroup Howson property. 
In fact, the two subsemigroups $S$ and $T$ in this example possess the following properties, which nicely complement the above proposition: both $S$ and $T$ are two-sided, one  of them is isomorphic to $\FI_1$, and their intersection is one-sided.

\begin{ex}
\label{ex:HowsonCounter1}
Let $S$ and $T$ be finitely generated subsemigroups of $\FI_1$ defined as
\[
S := \bigl\langle (-2, -2, 0), (0, 2, 2)\bigr\rangle, \quad\quad\; T := \bigl\langle (-2, -2, 1), (0, 2, 2)\bigr\rangle.
\]
Observe that both $S$ and $T$ are two-sided and that $S$ is an inverse semigroup. Indeed, $S$ is isomorphic to $\FI_1$ via the map defined by $(0, 2, 2) \mapsto (0, 1, 1)$, $(-2, -2, 0)\mapsto (-1, -1, 0)$ (Proposition~\ref{prop:Reilly}). It is easy to see that every element of $S$ can be uniquely written in the form $(-2a, 2x, 2b)$ where $a, x, b\in\mathbb{Z}$, $a,b\geq 0$ with $a+b>0$ and $-a\leq x\leq b$. In particular, $u\rho$ is even for every $u\in S$. On the other hand, we show that $u\rho$ is odd for every $u\in\NE(T)$. We proceed via a series of claims, with the desired result being Claim \ref{cl:HowsonCounter4}.

\begin{claim}
\label{cl:HowsonCounter1}
Let $u\in \N(T)$ be such that $u = x_1\dots x_n$ where each $x_i\in\{(-2, -2, 1), (0, 2, 2)\}$. Suppose that no proper initial subword of the product is an idempotent. Then, $u\rho = 1$.
\end{claim}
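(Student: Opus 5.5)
Write $c := (-2,-2,1)$ and $d := (0,2,2)$, and track the partial products $u_k := x_1\cdots x_k$ for $k\in[n]$. From the multiplication formula, $u_k\mu = 2(\#d - \#c)$ among $x_1,\dots,x_k$. No proper initial product is an idempotent, so $u_k\mu \neq 0$ for all $k<n$. Also $u_n\mu = u\mu<0$. The walk $k\mapsto u_k\mu$ moves in steps of $\pm 2$ and never returns to $0$ before time $n$. Since it ends negative, it must stay strictly negative throughout. In particular $x_1 = c$.

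The plan is to prove by induction on $k$ that $u_k\rho = 1$ for every $k\in[n]$. The base case is $u_1 = c$, which has $\rho$-value $1$. For the inductive step, the multiplication formula gives
\[
u_{k+1}\rho = \max\bigl(u_k\rho,\; x_{k+1}\rho + u_k\mu\bigr) = \max\bigl(1,\; x_{k+1}\rho + u_k\mu\bigr).
\]
Since $k<n$, we have $u_k\mu \le -2$. Both $x_{k+1}\rho = 1$ and $x_{k+1}\rho = 2$ therefore give $x_{k+1}\rho + u_k\mu \le 0$, so the maximum equals $1$. Taking $k+1=n$ yields $u\rho = 1$.

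The only delicate point is the sign argument in the first paragraph. It needs $u_k\mu\le -2$ for all $k<n$, not merely $u_k\mu\neq 0$; this holds because $\mu$ is a homomorphism and its values move by $\pm2$, so the walk cannot jump from negative to positive without passing through $0$. The rest is a direct computation with the multiplication formula.
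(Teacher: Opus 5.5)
Your proof is correct and follows essentially the same route as the paper. Both show that every prefix $x_1\cdots x_k$ has negative (hence $\le -2$) $\mu$-value, so $x_1=(-2,-2,1)$, and then induct on prefixes using $(u_k x_{k+1})\rho=\max\bigl(u_k\rho,\ u_k\mu+x_{k+1}\rho\bigr)$; your step-$\pm 2$ walk argument simply spells out the sign claim that the paper asserts directly from the hypotheses.
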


\begin{proof}
The assumptions imply that $(x_1
\dots x_i)\mu<0$ for all $i\in [n]$.
It follows that $x_1 = (-2, -2, 1)$. In particular, the claim holds when $n=1$.
For $n>1$ we have inductively $(x_1\dots x_{n-1})\rho = 1$. Now, either $x_n = (-2, -2, 1)$ or $x_n = (0, 2, 2)$. Since $T\mu = 2\mathbb{Z}$, if the former holds, we have 
\[
u\rho = ((x_1\dots x_{n-1})(-2, -2, 1))\rho = \max((x_1\dots x_{n-1})\rho,\, (x_1\dots x_{n-1})\mu + 1) = (x_1\dots x_{n-1})\rho = 1,
\]
while in the latter case
\[
u\rho = ((x_1\dots x_{n-1})(0, 2, 2))\rho = \max((x_1\dots x_{n-1})\rho,\, (x_1\dots x_{n-1})\mu + 2) = (x_1\dots x_{n-1})\rho = 1,
\]
completing the  proof.
\end{proof}

\begin{claim}
\label{cl:HowsonCounter2}
Let $u\in E(T)$ be such that $u = x_1\dots x_n$ where each $x_i\in\{(-2, -2, 1), (0, 2, 2)\}$. Suppose that no proper initial subword of the product is an idempotent. Then, $u\rho$ is odd.
\end{claim}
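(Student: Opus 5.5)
The plan is to split on the sign of the first prefix and to reduce the case that needs work to Claim~\ref{cl:HowsonCounter1}. Since $u$ is idempotent, $u\mu=0$. By hypothesis no proper prefix $x_1\dots x_i$ ($i<n$) is idempotent, so every proper prefix has $\mu$-value nonzero; these values change in steps of $\pm 2$, so they all have the same sign. Note also $n\ge 2$, since neither generator is idempotent.

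\emph{Case 1: $x_1=(-2,-2,1)$.} Then all proper prefixes are negative, so $v:=x_1\dots x_{n-1}\in\N(T)$, and $v$ itself has no proper idempotent prefix. Claim~\ref{cl:HowsonCounter1} therefore gives $v\rho=1$. Because $u\mu=0$ and $v\mu<0$, the last letter must raise $\mu$, so $x_n=(0,2,2)$ and $v\mu=-2$. We then compute
\[
u\rho=\max\bigl(v\rho,\ v\mu+2\bigr)=\max(1,0)=1,
\]
which is odd.

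\emph{Case 2: $x_1=(0,2,2)$.} Then all proper prefixes are positive, and here a direct inspection of $\rho$ is needed. We induct on $i<n$, aiming to show
\[
(x_1\dots x_i)\rho=\max_{j\le i}\bigl((x_1\dots x_{j-1})\mu+(x_j)\rho\bigr).
\]
This is just the multiplication rule for the third coordinate iterated. Each term is of one of two forms:
\begin{itemize}
\item for a letter $(0,2,2)$, the term is $(\text{prefix }\mu)+2$, which is even, since prefix values of $\mu$ lie in $2\mathbb{Z}$;
\item for a letter $(-2,-2,1)$, the term is $(\text{prefix }\mu)+1$, which is odd.
\end{itemize}
The aim is to show the maximum is attained by an even term. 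The first letter gives $\rho$-value $2$, and the prefix heights form a walk in $2\mathbb{Z}_{>0}$. The maximum prefix height $h$ is reached right after a step up by $(0,2,2)$, and that step contributes $(h-2)+2=h$. A letter $(-2,-2,1)$ applied at a prefix of height at most $h$ contributes at most $h+1$. Equality $h+1$ would require the letter $(-2,-2,1)$ to be applied at height exactly $h$, and that term then exceeds $h$. So the maximum is $h+1$, which is odd, whenever some $(-2,-2,1)$ is applied at the top height $h$. Since the walk must descend from $h$ to $0$, the first step down from the final visit to height $h$ is exactly such a letter. Hence $u\rho=h+1$ is odd.

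The main obstacle is Case 2: it is not covered by Claim~\ref{cl:HowsonCounter1}, and it requires this explicit max-over-prefixes formula together with the observation that a descent must occur from the peak height.
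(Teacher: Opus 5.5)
Your proof is correct. Case~1 is exactly the paper's argument: reduce to Claim~\ref{cl:HowsonCounter1}, then observe that the final $(0,2,2)$, applied at height $-2$, does not raise $\rho$.

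In Case~2 you take a different route. The paper chooses the least $k$ with $(x_1\dots x_k)\rho=u\rho$ and argues by contradiction that $x_k=(-2,-2,1)$. If $x_k$ were $(0,2,2)$, minimality would force $(x_1\dots x_k)\rho=(x_1\dots x_k)\mu$. The next letter exists, since $x_n=(-2,-2,1)$, and it would then strictly increase $\rho$, which is impossible. This gives $u\rho=(x_1\dots x_{k-1})\mu+1$.

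You instead unroll the multiplication rule into $u\rho=\max_j\bigl((x_1\dots x_{j-1})\mu+x_j\rho\bigr)$. You bound the even contributions by the peak height $h$ and the odd ones by $h+1$. Then you observe that the letter after any visit to height $h$ must be $(-2,-2,1)$, so $u\rho=h+1$ exactly.

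Your version gives an explicit value and is more uniform. In Case~2 it never uses the hypothesis on proper prefixes; it only needs $h\ge 2>0=u\mu$. Allowing the peak to sit at the empty prefix (height $0$), the same argument gives $u\rho=h+1$ for every such product with $u\mu\le 0$. That covers Case~1 and in fact yields Claim~\ref{cl:HowsonCounter4} directly, bypassing Claims~\ref{cl:HowsonCounter1}--\ref{cl:HowsonCounter3}. The paper's minimal-index argument is more local and matches its prefix-by-prefix style.

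One slip to fix: you wrote ``the aim is to show the maximum is attained by an even term''; it should say \emph{odd}. Your subsequent reasoning does establish the odd version.
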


\begin{proof}
The assumption means that in all proper initial subwords the terminal vertex is on the same side in 
relation to the initial vertex. We split our considerations into two cases depending on the value of $x_1$.

\emph{Case 1: $x_1 = (-2, -2, 1)$.} Note that every proper initial subword must be negative and that $x_n = (0, 2, 2)$. The initial subword $x_1\dots x_{n-1}$ satisfies the assumptions from Claim~\ref{cl:HowsonCounter1}. Notice that $(x_1\dots x_{n-1})\mu = -2$, and then the final multiplication by $(0, 2, 2)$ does not increase the value of $\rho$. Hence, $u\rho = (x_1\dots x_n)\rho = (x_1\dots x_{n-1})\rho = 1$ by Claim~\ref{cl:HowsonCounter1}.

\emph{Case 2: $x_1 = (0, 2, 2)$.} In this case, every proper initial subword is positive and $x_n = (-2, -2, 1)$. Let 
\[
k := \min\{1\leq i \leq n: (x_1\dots x_i)\rho = u\rho\}.
\]
We claim that $x_k$ must be $(-2, -2, 1)$. Suppose otherwise. Then, 
\[
\bigl((x_1\dots x_{k-1})(0, 2, 2)\bigr)\rho = \max\bigl((x_1\dots x_{k-1})\rho, (x_1\dots x_{k-1})\mu + 2\bigr) = (x_1\dots x_{k-1})\mu + 2,
\]
by minimality of $k$; in particular, $(x_1\dots x_k)\rho = (x_1\dots x_k)\mu$. Since $x_k$ is not the final letter, $x_1\dots x_k$ is followed by $(0, 2, 2)$ or $(-2, -2 ,1)$. 
In each case we have:
\begin{align*}
    &\bigl((x_1\dots x_k)(0, 2, 2)\bigr)\rho = \max\bigl((x_1\dots x_k)\rho, (x_1\dots x_k)\mu + 2\bigr) = (x_1\dots x_k)\mu + 2 > (x_1\dots x_k)\rho;\\
    &\bigl((x_1\dots x_k)(-2, -2, 1)\bigr)\rho = \max\bigl((x_1\dots x_k)\rho, (x_1\dots x_k)\mu+1)\bigr) = (x_1\dots x_k)\mu+1>(x_1\dots x_k)\rho.
\end{align*}
Therefore
\[
(x_1\dots x_n)\rho\geq (x_1\dots x_{k+1})\rho> (x_1\dots x_k)\rho,
\]
which contradicts the choice of $k$, and thus establishes the claim that $x_k = (-2, -2, 1)$.  
The following is now straightforward:
\[
u\rho = \bigl((x_1\dots x_{k-1})(-2, -2, 1)\bigr)\rho = \max\bigl((x_1\dots x_{k-1})\rho, (x_1\dots x_{k-1})\mu+1\bigr) = (x_1\dots x_{k-1})\mu + 1,
\]
which is odd since $T\mu = 2\mathbb{Z}$. This completes the proof of Claim \ref{cl:HowsonCounter2}.
\end{proof}

\begin{claim}
\label{cl:HowsonCounter3}
For any $u\in E(T)$, $u\rho$ is odd.
\end{claim}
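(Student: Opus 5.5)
We argue by induction on the number of idempotent prefixes of a word representing $u$. Write $u = x_1\dots x_n$ with each $x_i\in\{(-2,-2,1),(0,2,2)\}$. Since $u$ is idempotent, $u\mu=0$. Let $0<i_1<i_2<\dots<i_k=n$ be the indices for which the prefix $x_1\dots x_{i_j}$ is idempotent; at least $i_k=n$ qualifies. Set $e_1 := x_1\dots x_{i_1}$ and $e_j := x_{i_{j-1}+1}\dots x_{i_j}$ for $j\geq 2$. Then $u = e_1e_2\cdots e_k$.

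Each $e_j$ has $\mu$-value $0$, because $(x_1\dots x_{i_j})\mu = 0$ for all $j$. Moreover, no proper initial subword of the product defining $e_j$ is idempotent. Indeed, if $x_{i_{j-1}+1}\dots x_m$ with $m<i_j$ were idempotent, then $(x_1\dots x_m)\mu = 0$, which would make $x_1\dots x_m$ an idempotent prefix strictly between $i_{j-1}$ and $i_j$. Here we use that an element is idempotent iff its $\mu$-value is $0$. So each $e_j$ satisfies the hypotheses of Claim~\ref{cl:HowsonCounter2}, and hence $e_j\rho$ is odd for every $j$.

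Next we compute $\rho$ of the product. For idempotents the multiplication formula gives
\[
(-a_1,0,b_1)(-a_2,0,b_2) = (-\max(a_1,a_2),\,0,\,\max(b_1,b_2)).
\]
By induction on $k$ we therefore get $u\rho = \max_j e_j\rho$. This is a maximum of odd integers, so it is odd.

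There is no real obstacle here. The only point needing care is checking that the factors $e_j$ inherit the "no proper idempotent prefix" hypothesis. This follows because $\mu$ is a homomorphism onto $\mathbb{Z}$, so $\mu$ of a subword can be read off as a difference of prefix values.
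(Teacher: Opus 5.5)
Your proof is correct and takes essentially the same route as the paper: split $u$ at its idempotent prefixes into idempotent factors $e_j$, each satisfying the hypothesis of Claim~\ref{cl:HowsonCounter2}, then use that $\rho$ of a product of idempotents is the maximum of their $\rho$-values. The paper simply asserts that such a decomposition exists, whereas you verify via $\mu$ that each factor is idempotent and inherits the no-idempotent-prefix condition (and the opening ``induction on the number of idempotent prefixes'' is not actually needed, since the decomposition is direct).
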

\begin{proof}
We may assume that $u$ is the product of idempotents $e_1, \dots, e_n\in E(T)$, each of which satisfies the assumption in Claim~\ref{cl:HowsonCounter2}. Then, $u\rho = \max(e_1\rho, \dots ,e_n\rho)$ is odd since each $e_i\rho$ is.
\end{proof}

\begin{claim}
\label{cl:HowsonCounter4}
For any $u\in\NE(T)$, $u\rho$ is odd.
\end{claim}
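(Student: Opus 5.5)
Claim~\ref{cl:HowsonCounter3} already covers $u\in E(T)$, so I only need to treat $u\in\N(T)$. I will reduce this case to Claims~\ref{cl:HowsonCounter1} and~\ref{cl:HowsonCounter3} by cutting $u$ at its last idempotent prefix.

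Write $u=x_1\dots x_n$ with each $x_i\in\{(-2,-2,1),(0,2,2)\}$. Since an element of $\FI_1$ is idempotent exactly when $\mu$ vanishes, a prefix $x_1\dots x_i$ is idempotent iff $(x_1\dots x_i)\mu=0$. Let $k$ be the largest index with $0\le k<n$ such that either $k=0$ or $(x_1\dots x_k)\mu=0$.

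Put $v:=x_{k+1}\dots x_n$. I claim that every initial subword of $v$ (including $v$ itself) is negative.
\begin{bulletmidsentence}
\item For $k<j\le n$ we have $(x_{k+1}\dots x_j)\mu=(x_1\dots x_j)\mu$, since $(x_1\dots x_k)\mu=0$ (or $k=0$). By the maximality of $k$ this value is non-zero for $j<n$.
\item For $j=n$ it equals $u\mu<0$.
\item Each step changes $\mu$ by $\pm 2$, and all values are even. So the sequence of prefix values starts at $x_{k+1}\mu=\pm2$ and never passes through $0$.
\item As it ends negative, it is negative throughout.
\end{bulletmidsentence}
Hence $v\in\N(T)$ has no idempotent proper initial subword, and Claim~\ref{cl:HowsonCounter1} gives $v\rho=1$.

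If $k=0$, then $u=v$ and $u\rho=1$ is odd. Otherwise $e:=x_1\dots x_k\in E(T)$, so $e\rho$ is odd (hence $\ge1$) by Claim~\ref{cl:HowsonCounter3}. The multiplication formula gives
\[
u\rho=(ev)\rho=\max\bigl(e\rho,\ e\mu+v\rho\bigr)=\max(e\rho,1)=e\rho,
\]
which is odd.

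The only delicate point is the middle step, that $v$ never becomes positive. This rests on the evenness of $T\mu=2\mathbb{Z}$ together with the step size $2$, which together forbid jumping over $0$.
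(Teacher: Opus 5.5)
Your proof is correct, but it takes a different route from the paper's.

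\textbf{The paper's route.} The paper argues by contradiction using a single multiplication. If $u=(-a,-2x,2b)\in\N(T)$ had even $\rho$-value, then $u(0,2,2)^x=(-\max(a,2x),0,\max(2b,0))=(-a,0,2b)$ would be an idempotent of $T$ with even $\rho$-value, contradicting Claim~\ref{cl:HowsonCounter3}. In effect, the paper ``closes up'' a negative element into an idempotent by right-multiplying with a power of the positive generator. This leaves $\lambda$ and $\rho$ unchanged, so statements about $\NE(T)$ reduce to statements about $E(T)$ without examining factorizations.

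\textbf{Your route.} You factor $u=ev$ at its longest proper idempotent prefix. You then apply Claim~\ref{cl:HowsonCounter1} to $v$ and Claim~\ref{cl:HowsonCounter3} to $e$. This gives a sharper, constructive conclusion: $u\rho=\max(e\rho,1)$, so $u\rho$ is either $1$ or the $\rho$-value of that prefix. It also mirrors the prefix-cutting already used in Claims~2 and~3.

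\textbf{Comparison.} Your argument costs an extra appeal to Claim~\ref{cl:HowsonCounter1}; the paper's is shorter and needs only Claim~\ref{cl:HowsonCounter3}. One small remark: the step you call delicate (that $v$ stays negative throughout) is not needed to invoke Claim~\ref{cl:HowsonCounter1} as stated. Its hypotheses are only that $v\in\N(T)$ and that no proper initial subword is idempotent. Both follow at once from $v\mu=u\mu<0$ and the maximality of $k$. What your parity argument (steps of $\pm2$ cannot jump over $0$) actually supplies is the justification for the assertion ``the assumptions imply $(x_1\dots x_i)\mu<0$'' inside the proof of Claim~\ref{cl:HowsonCounter1}.
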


\begin{proof}
When $u\in E(T)$, the assertion is Claim \ref{cl:HowsonCounter3}.
Let us consider $u\in \N(T)$.
Aiming for a contradiction, assume
that $u\rho$ is even. Since $T\mu = 2\mathbb{Z}$, we may write $u$ as $(-a, -2x, 2b)$ where $a,b\in\mathbb{N}_0$ and $x\in\mathbb{N}$ with $-a\leq -2x$. But $(-a, 0, 2b) = u(0, 2, 2)^x\in E(T)$, contradicting Claim~\ref{cl:HowsonCounter3}. 
\end{proof}

We are now ready to prove that $S\cap T$ is not finitely generated.
Claim \ref{cl:HowsonCounter4} and the fact that $u\rho$ is even for all $u\in S$ imply that
$S\cap T\subseteq \P$.
On the other hand, for every $a\in \mathbb{N}_0$ we have 
\[
(-2a,2,2)=(-2,-2,0)^a (0,2,2)^{a+1}=(-2, -2, 1)^a(0, 2, 2)^{a+1}\in S\cap T.
\]
This means that the one-sided subsemigroup $S\cap T$ has a non-empty intersection with all $\P_{2a, 0}$,
and hence it is not finitely generated by Proposition~\ref{prop:ChoRuskuc}\ref{it:CR2}.
\end{ex}

As indicated in the Introduction, we have now established the first main result of this paper, Theorem \ref{thmA:Howson}.

We conclude this section by characterising those subsemigroups of $\FI_1$ with the semigroup Howson property.

\begin{prop}
\label{prop:subFI1iff}
Let $S$ be a subsemigroup of $\FI_1$. Then, the following are equivalent:
\begin{thmenumerate}
    \item \label{it:subFI11} $S$ has the semigroup Howson property;
    \item \label{it:subFI12} $S$ is one-sided;
    \item \label{it:subFI13} $S$ does not contain a copy of $\FI_1$.
\end{thmenumerate}
\end{prop}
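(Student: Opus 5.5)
We will prove the cycle of implications (ii)$\Rightarrow$(i)$\Rightarrow$(iii)$\Rightarrow$(ii).

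\emph{(ii)$\Rightarrow$(i).} Suppose $S$ is one-sided, say $S\subseteq\PE$, and let $U,V$ be finitely generated subsemigroups of $S$. Then $U$ and $V$ are finitely generated one-sided subsemigroups of $\FI_1$. Proposition~\ref{prop:FI1fgintersectpairs}\ref{it:FI1fgintersect2} then gives that $U\cap V$ is finitely generated. Since finite generation of $U\cap V$ does not depend on whether we regard it inside $S$ or inside $\FI_1$, this is all that is needed.

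\emph{(i)$\Rightarrow$(iii).} We argue by contraposition. Suppose $S$ contains a subsemigroup $S'\cong\FI_1$. By Theorem~\ref{thmA:Howson}, or directly by Example~\ref{ex:HowsonCounter1}, $\FI_1$ fails the semigroup Howson property, so $S'$ does too. Proposition~\ref{prop:subHowson} then shows that $S$ cannot have the semigroup Howson property.

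\emph{(iii)$\Rightarrow$(ii).} Again by contraposition, suppose $S$ is not one-sided. Then $S\not\subseteq\PE$ and $S\not\subseteq\NE$, so $S$ contains a negative element $v$ and a positive element $u$; that is, $S$ is two-sided. The task is to build a copy of $\FI_1$ inside $S$. Write $u\mu=p>0$ and $v\mu=-q<0$. Replacing $u$ by $u^q$ and $v$ by $v^p$, we obtain elements $u',v'\in S$ with $u'\mu=pq=-v'\mu$. The obstacle is that $v'$ need not equal $u'^{-1}$, so Proposition~\ref{prop:Reilly} cannot be applied directly.

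The plan is to take high powers and conjugate them so that one of the pair becomes the inverse of the other. For an element $w=(-a,x,b)$ with $x>0$, we have $w^n=(-a,nx,(n-1)x+b)$. Consider the products
\[
w_1:=v'^{\,k}u'^{\,k+1}v'^{\,k}u'^{\,k}, \qquad w_2:=v'^{\,k}u'^{\,k}v'^{\,k+1}u'^{\,k}
\]
for large $k$. Here $w_1$ should be positive with $\mu$-value $pq$, and $w_2$ negative with $\mu$-value $-pq$. The idea is that the surrounding factors $v'^ku'^k$, which form an idempotent-like \lq\lq frame\rq\rq\ of extent roughly $kpq$ on both sides, swamp the Munn trees of $u',v'$. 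Once $k$ exceeds the left and right extents of $u'$ and $v'$, we expect $w_1=(-A,pq,B)$ and $w_2=(-A-pq,-pq,B-pq)=w_1^{-1}$, with $A$ and $B$ determined by the frame. This is the step where the exact products must be computed and where the main obstacle lies.

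If the computation works as expected, $w_1$ and $w_1^{-1}$ both lie in $S$. Since $w_1$ is not an idempotent, Proposition~\ref{prop:Reilly} shows that $\langle w_1,w_1^{-1}\rangle\cong\FI_1$. Because an inverse subsemigroup generated by $w_1,w_1^{-1}$ as an inverse semigroup coincides with the plain subsemigroup they generate, this copy of $\FI_1$ lies in $S$, contradicting (iii).

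If the particular frame above fails, there is a fallback. One chooses $w_1=e\,u'\,e$ and $w_2=e\,v'\,e$, where $e$ is the idempotent $v'^ku'^k$ (or the product of it with $u'^kv'^k$) in $S$, whose Munn tree covers an interval large enough that the $\lambda,\rho$ coordinates of the conjugates are those of $e$ shifted appropriately. One then checks directly with the multiplication formula that $w_2=w_1^{-1}$.
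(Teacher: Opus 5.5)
Your arguments for (ii)$\Rightarrow$(i) and (i)$\Rightarrow$(iii) are exactly the paper's. Your strategy for (iii)$\Rightarrow$(ii) is also the paper's: equalise $\mu$-values via $u'=u^q$, $v'=v^p$, find a non-idempotent $w_1\in S$ with $w_1^{-1}\in S$, then apply Proposition~\ref{prop:Reilly}. The gap is that the one substantive step, $w_2=w_1^{-1}$, is never established. You write ``we expect'' and ``if the computation works as expected'', and offer a fallback that is likewise unchecked. In fact the fallback is the same pair, since $w_1=eu'e$ and $w_2=ev'e$ for $e=v'^{\,k}u'^{\,k}$. As written, then, (iii)$\Rightarrow$(ii) is not proved: you have named the crux without verifying it.

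The step is easily filled, and it holds for every $k\geq 1$, not only for large $k$. By the multiplication formula, the interval $[-(s_1\cdots s_m)\lambda,\,(s_1\cdots s_m)\rho]$ of a product is the union of the translates $[-s_j\lambda,s_j\rho]+(s_1\cdots s_{j-1})\mu$. Write $u'=(-a_1,d,b_1)$ and $v'=(-a_2,-d,b_2)$ with $d=pq$. Tracking the four blocks of each word gives
\begin{align*}
w_1&=\bigl(-\max(a_2+(k-1)d,\,a_1+kd),\ d,\ \max(b_1,\,b_2+d)\bigr),\\
w_2&=\bigl(-\max(a_2+kd,\,a_1+(k+1)d),\ -d,\ \max(b_2,\,b_1-d)\bigr).
\end{align*}
The inverse formula $(-a,x,b)^{-1}=(-(a+x),-x,b-x)$ then gives $w_2=w_1^{-1}$.

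The paper avoids the idempotent frame entirely and uses $u'v'u'$ and $v'u'v'$. Their intervals are $[-a_1,b_1]\cup([-a_2,b_2]+d)$ and $[-a_2,b_2]\cup([-a_1,b_1]-d)$, with $\mu$-values $d$ and $-d$. The second is the first shifted by $-d$, so the two elements are mutually inverse by a one-line check.
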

\begin{proof}
\ref{it:subFI11} $\Rightarrow$ \ref{it:subFI13} follows from $\FI_1$ not being Howson and Proposition~\ref{prop:subHowson}. 

\noindent \ref{it:subFI13} $\Rightarrow$ \ref{it:subFI12} Suppose $S$ is two-sided. Then, $S$ contains a positive element $u_1 = (-a_1, x_1, b_1)$ and a negative element $u_2 = (-a_2, -x_2, b_2)$ where $x_1, x_2>0$. It is routine to check that $u_1^{x_2}u_2^{x_1}u_1^{x_2}$ and $u_2^{x_1}u_1^{x_2}u_2^{x_1}$ are inverses of each other. By Proposition~\ref{prop:Reilly}, $S$ contains a copy of $\FI_1$.

\noindent\ref{it:subFI12} $\Rightarrow$ \ref{it:subFI11}. If $S$ is one-sided, so is any subsemigroup of $S$. The result follows from Proposition~\ref{prop:FI1fgintersectpairs}~\ref{it:FI1fgintersect2}.
\end{proof}

Of course, the corresponding classification for the non-free monogenic inverse semigroups is vacuous: they are all Howson, and hence so are all their subsemigroups.

\section{Finitely generated subsemigroups of the monogenic free inverse semigroup}
\label{sec:Character}

Motivated by the main 
finding from the previous section, 
namely that the monogenic free inverse semigroup $\FI_1$ is not Howson,
we proceed to further investigate  finitely generated subsemigroups of  $\FI_1$.
Our main goal is to establish a positive answer to the FGIP for $\FI_1$. 
To do so, in this section we provide a characterisation of finitely generated subsemigroups of $\FI_1$. We have already observed differences between one-sided and two-sided subsemigroups in the previous section, and, unsurprisingly, there will be two different characterisations, one for each. The characterisation in the two-sided case is much more complicated than that in the one-sided case; in fact, we already have all the ingredients for the 
latter, and we present it in Subsection~\ref{subsec:1side}. For the two-sided case, we first obtain auxiliary results in Subsections~\ref{subsec:PIG} and \ref{subsec:Periodic}, and the characterisation is established in Subsection~\ref{subsec:2side}.

\subsection{One-sided subsemigroups}
\label{subsec:1side}

\begin{thm}
\label{thm:1side}
Let $S\subseteq \PE$ (resp.\ $S\subseteq \NE$) be a one-sided subsemigroup of $\FI_1$. Then, $S$ is finitely generated if and only if it has a non-empty intersection with only finitely many $\PE_{a,b}$ (resp.\ $\NE_{a,b}$.)
\end{thm}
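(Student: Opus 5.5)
By symmetry (the anti-automorphism/inversion swaps $\PE$ and $\NE$, or simply repeating the argument verbatim) it suffices to treat $S\subseteq\PE$. The plan is to reduce both directions to the positive part $\P(S)$ and Proposition~\ref{prop:ChoRuskuc}, controlling the idempotent part $E(S)=S\cap\E$ separately. The key structural observation, already used in the proof of Proposition~\ref{prop:FI1fgintersectpairs}, is that $\P(S)$ is an ideal of $S$: a product of elements of $\PE$ has $\mu$-value equal to the sum, so it lies in $\E$ only if all factors do. Hence $E(S)=S\setminus\P(S)$ is generated by the idempotents among any generating set of $S$.

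For the forward direction, suppose $S=\langle X\rangle$ with $X$ finite. Then $E(S)=\langle X\cap\E\rangle$ is a finitely generated subsemilattice of $\E$, hence finite, since the semilattice is a band and so finitely generated implies finite. By Proposition~\ref{prop:FiniteGenerationLarge}, $\P(S)$ is finitely generated, so by Proposition~\ref{prop:ChoRuskuc}\ref{it:CR2} it meets only finitely many $\P_{a,b}$. Since $\PE_{a,b}=\P_{a,b}\cup\{(-a,0,b)\}$, and $E(S)$ is finite, $S$ meets only finitely many $\PE_{a,b}$.

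For the converse, suppose $S$ meets only $\PE_{a_1,b_1},\dots,\PE_{a_k,b_k}$. Then $E(S)$ is finite, having at most one element in each $\PE_{a_i,b_i}$, and $\P(S)$ meets only finitely many $\P_{a,b}$. The main point to check is that each $\P(S)\cap\P_{a,b}$ is a subsemigroup: it is the intersection of two subsemigroups, using Proposition~\ref{prop:ChoRuskuc}\ref{it:CR1}. Since $\P_{a,b}\cong\mathbb{N}$, each such intersection is finitely generated by Proposition~\ref{prop:SitSiu}. So $\P(S)$ is a finite union of finitely generated subsemigroups, hence finitely generated by Proposition~\ref{prop:unionfg}. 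Finally, $S\setminus\P(S)=E(S)$ is finite, so $S$ is finitely generated by Proposition~\ref{prop:FiniteGenerationLarge}.

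I expect no real obstacle. The only delicate points are the ideal property of $\P(S)$ and the finiteness of finitely generated subsemilattices, both routine.
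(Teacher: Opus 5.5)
Your proposal is correct and follows essentially the same argument as the paper. The forward direction is identical: $E(S)$ is finite, $\P(S)$ is finitely generated by Proposition~\ref{prop:FiniteGenerationLarge}, and Proposition~\ref{prop:ChoRuskuc}\ref{it:CR2} finishes it.

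The converse differs only slightly. You split $\P(S)$ over the $\P_{a,b}\cong\mathbb{N}$ and then add back the finitely many idempotents via Proposition~\ref{prop:FiniteGenerationLarge}. The paper instead writes $S$ directly as the finite union of the subsemigroups $S\cap\PE_{a,b}$ of $\PE_{a,b}$, each finitely generated by Proposition~\ref{prop:SitSiu}.
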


\begin{proof}
Without loss of generality, assume $S\subseteq \PE$.

\noindent ($\Rightarrow$) Suppose $S = \langle X\rangle$ where $X\subseteq \PE$ is finite. Since $\P(S)$ is an ideal of $S$, $E(S) = S\setminus\P(S) = \langle X\setminus\P(S)\rangle \subseteq \E$ is finite. Thus, $E(S)$ has a non-empty intersection with only finitely many $\PE_{a,b}$. By Proposition~\ref{prop:FiniteGenerationLarge}, $\P(S)$ is finitely generated. Hence, $\P(S)$ has a non-empty intersection with only finitely many $\PE_{a,b}$ by Proposition~\ref{prop:ChoRuskuc} \ref{it:CR2}.

\noindent ($\Leftarrow$) Each $\PE_{a,b}$ is isomorphic to $\mathbb{N}_0$. Thus, each non-empty $\PE_{a,b}\cap S$ is finitely generated by Proposition~\ref{prop:SitSiu}. Then, $S$ is the finite union of finitely generated subsemigroups and so finitely generated.
\end{proof}

Based on the above proof, we view a finitely generated one-sided subsemigroup of $\FI_1$ as a finite union of subsemigroups of $\mathbb{N}_0$.

\subsection{Purely idempotent-generated elements}
\label{subsec:PIG}

This concept was introduced by Jones and Trotter \cite{JonesTrotter}. Here we establish
a few auxiliary results needed for our characterisation in the two-sided case.

\begin{defn}
\label{defn:PIG}
Let $S$ be a semigroup. An element $e\in S$ is said to be \emph{purely idempotent-generated} if the following holds: whenever $e$ is written as $s_1\dots s_n$ where each $s_i\in S$, every $s_i$ is an idempotent. The set of purely idempotent-generated elements of $S$ is denoted by $\PIG(S)$. The complement of $\PIG(S)$ in $S$ is denoted by $S^\circ$. 
\end{defn}

Observe that each of $\PIG(S)$ and $S^\circ$ can be empty. If $S^\circ$ is not empty, it is easy to see that it is an ideal of $S$. It is clear that every purely idempotent-generated element is an idempotent, i.e. $\PIG(S)\subseteq E(S)$.

The following result from \cite{JonesTrotter} was crucial in the proof that $\FI_1$ has the inverse semigroup Howson property.

\begin{prop}[{\cite[Proposition 1.5]{JonesTrotter}}]
\label{prop:JonesTrotterPIG}
Let $S$ be an inverse subsemigroup of $\FI_1$. Then, $S$ is finitely generated if and only if $\PIG(S)$ is finite. \qed
\end{prop}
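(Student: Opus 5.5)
We prove the two implications separately, working directly with the triple representation $(-a,x,b)$ of $\FI_1$ and with the ideal $S^\circ$. Throughout, $S$ is an inverse subsemigroup of $\FI_1$.

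\emph{($\Rightarrow$)} Suppose $S=\langle X\rangle$ with $X$ finite. By Proposition~\ref{prop:invplaingen} we may take $X$ to be a finite plain generating set. Let $e\in\PIG(S)$ and write $e=x_1\cdots x_n$ with each $x_i\in X$. By the definition of purely idempotent-generated, every $x_i$ is an idempotent. Hence $e$ lies in the subsemigroup generated by the idempotents of $X$. Idempotents of $\FI_1$ commute, so this subsemigroup is a finitely generated semilattice, and is therefore finite. Consequently $\PIG(S)$ is finite.

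\emph{($\Leftarrow$)} Suppose $\PIG(S)$ is finite. If $S$ contains no non-idempotent element, then $S\subseteq\E$ and every element is purely idempotent-generated, so $S=\PIG(S)$ is finite. Otherwise, fix a non-idempotent $u\in S$. Then $\langle u,u^{-1}\rangle\cong\FI_1$ by Proposition~\ref{prop:Reilly}, so $S\mu=p\mathbb{Z}$ for some $p\in\mathbb{N}$.

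The plan is to show that $S^\circ$ is finitely generated. Since $S\setminus S^\circ=\PIG(S)$ is finite, Proposition~\ref{prop:FiniteGenerationLarge} then gives finite generation of $S$.

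Every element of $S^\circ$ factors as a product involving at least one non-idempotent element of $S$. The key step is therefore a boundedness statement: there is a constant $N$ such that every element of $S$ of $\mathscr{D}$-rank $a+b> N$ is a product of elements of $S$ of strictly smaller rank. Rank-wise this is plausible, because a product's Munn tree is the union of the factors' trees, so factors never have larger rank.

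To establish it, I would take $s=(-a,x,b)\in S$ of large rank and use that $ss^{-1}$ and $s^{-1}s$ are in $S$. I would try to peel off a short piece, in the spirit of the factorisation in Proposition~\ref{prop:subFI1iff}, by writing
\[
s = (s s^{-1})\, s
\]
and decomposing $ss^{-1}=(-a,0,b)$. The step I expect to be hardest is showing that a large idempotent $(-a,0,b)$ in $S$ that is \emph{not} purely idempotent-generated splits as a product $yz$ of two elements of $S$ whose trees are each shorter. I would use the finiteness of $\PIG(S)$ to get a non-idempotent factorisation of this idempotent. The difficulty is that such a factorisation might still contain a factor of full length. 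If it does, that factor is a non-idempotent $t$ with the same support as $ss^{-1}$, so $t\,\mathscr{R}\,ss^{-1}$. One can then write
\[
ss^{-1}=t t^{-1}=t\cdot t^{-1},
\]
which has not reduced rank. To reduce rank, I would instead move the endpoints inward using powers of a fixed generator $u$ of $S\mu$, exploiting the periodicity of $S\mu=p\mathbb{Z}$ and the fact that for large $a$ or $b$ one can use translates by $u^{\pm k}$ to cover the support of the idempotent with two shorter overlapping paths.

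Once the boundedness statement holds, $S$ is generated by its elements of rank at most $N$. These are finitely many, since each $\mathscr{D}$-class $D_n$ is finite, and this completes the argument.
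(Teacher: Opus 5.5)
Your forward direction is correct. It is exactly the argument the paper gives in the proof of Lemma~\ref{lem:subFI1PIG}; the paper quotes the proposition itself from Jones and Trotter without proof. The converse, however, is not established.

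Your boundedness statement is just a reformulation of finite generation: if $S=\langle X\rangle$ with $X$ finite, take $N$ to be the largest rank in $X$. So all the content lies in the step you leave open, and the two concrete moves you propose for it do not work.

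First, writing $s=(ss^{-1})s$ is circular. However $ss^{-1}$ is decomposed, the full-rank factor $s$ survives.

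Second, and more seriously, the step you call the hardest is false as stated. A large idempotent outside $\PIG(S)$ need not be a product $yz$ of two elements of strictly smaller rank. Take $S=\FI_1$, where $\PIG(S)=\emptyset$, and $g=(-A,0,0)$. Suppose $g=yz$ and put $x=y\mu$.
\begin{itemize}
\item Since $g\rho=0$, we get $y\rho=0$, $x\le 0$ and $z\rho=-x$.
\item Since $g\lambda=\max(y\lambda,z\lambda-x)=A$, either $y=(-A,x,0)$ or $z=(-(A+x),-x,-x)$.
\end{itemize}
Either way one factor has rank $A$. Geometrically, the walk starts and ends at the right end of the path, so whichever factor reaches $-A$ covers all of it. A third factor is indispensable, e.g.\ $g=(-1,-1,0)\,(-(A-1),0,0)\,(0,1,1)$. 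The same obstruction occurs for the idempotents $(-A,0,b_S^\ast)$, with $A$ large, in any non-semilattice inverse $S$.

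What is missing is a mechanism that makes such longer factorisations available inside $S$, and this is where finiteness of $\PIG(S)$ is really used. Translating by powers of an arbitrary $u$ with $u\mu=p$ will not do. Such a $u$ may have a larger left or right overhang than the idempotent you want to decompose, and then any product involving $u^{\pm1}$ overshoots its support. Two ingredients are needed.
\begin{itemize}
\item Elements of $\mu$-value $\pm p$ with minimal overhangs, namely $\alpha_S=(-a^\ast,p,p+b_\alpha)$ and $\beta_S=(-(a_\beta+p),-p,b^\ast)$ as in Proposition~\ref{prop:OliveiraSilva}.
\item A guarantee that the smaller idempotent actually lies in $S$. This comes from the eventual $p$-periodicity of $E(S)$ (Lemma~\ref{lem:PeriodicSemilattice}, via Dickson's lemma). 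That, not the periodicity of $S\mu=p\mathbb{Z}$, is the relevant periodicity.
\end{itemize}

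With these, for $A$ large and $e=(-(A-p),0,B)\in E(S)$ one has $(-A,0,B)=e\,\beta_S\, e\,\beta_S^{-1}$. The symmetric identity with $\alpha_S$ handles large $B$. Hence $E(S)$ lies in the subsemigroup generated by finitely many idempotents together with $\alpha_S^{\pm1},\beta_S^{\pm1}$.

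Non-idempotents need their own treatment. A positive $s=(-a,qp,qp+b')\in S$ equals:
\begin{itemize}
\item $(ss^{-1})\alpha_S^{q}$ if $b'\ge b_\alpha$;
\item $(ss^{-1})\beta_S^{-q}$ if $a\ge a_\beta$;
\item $(ss^{-1})\alpha_S^{i}\beta_S^{-j}$ for suitable $i+j=q$ once $q$ is large.
\end{itemize}
This leaves finitely many exceptions, and negative elements are handled dually. None of this is present in your sketch.
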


For two-sided subsemigroups of $\FI_1$, we will often consider their maximal inverse subsemigroups. Given a subsemigroup $T$ of an inverse semigroup $S$, we write $\inv(T) := T\cap T^{-1}$. Observe that $\inv(T)$ may be empty; otherwise, it is the unique maximal inverse subsemigroup of $T$.

We also need the concept of $E$-unitarity.
An inverse semigroup $S$
is \emph{$E$-unitary} if the following holds: if $e\in E(S)$ and $s\in S$
are such that $es\in E(S)$ then necessarily $s\in E(S)$. It is easy to check that if $S$ is $E$-unitary, then $se\in E(S)$ also implies $s\in E(S)$. 
Notice that any inverse subsemigroup of an $E$-unitary inverse semigroup is again $E$-unitary. It is a standard fact that free inverse semigroups (of any rank) are $E$-unitary (see, for example, \cite[Chapter 6]{Lawson}).

The following result extends \cite[Proposition 1.1]{JonesTrotter} to plain subsemigroups of $E$-unitary inverse semigroups.

\begin{lem}
\label{lem:EuniPIGGreen}
Let $S$ be an $E$-unitary inverse semigroup and let $T$ be a subsemigroup of $S$.
Then, $e\in E(T)$ is not purely idempotent-generated if and only if there 
exists a non-idempotent $s\in \inv(T)$ such that $e\mathscr{R}s$.
\end{lem}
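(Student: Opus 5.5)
We prove the two implications separately; both rest on elementary manipulations with inverses together with $E$-unitarity.

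($\Leftarrow$) Suppose $s\in\inv(T)$ is not an idempotent and $e\mathscr{R}s$, so that $ss^{-1}=ee^{-1}=e$. Since $s\in\inv(T)=T\cap T^{-1}$, we have $s^{-1}\in T$. Hence $e=ss^{-1}$ is a factorisation of $e$ into elements of $T$ in which the factor $s$ is not an idempotent. Therefore $e\notin\PIG(T)$. This direction does not need $E$-unitarity.

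($\Rightarrow$) Suppose $e\in E(T)$ is written as $e=s_1\cdots s_n$ with each $s_i\in T$ and some $s_i$ not an idempotent.

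\emph{Step 1: every factor is $\mathscr{R}$-below $e$ in a usable sense.} Set $t_j:=s_j s_{j+1}\cdots s_n s_1\cdots s_{j-1}$, a cyclic conjugate of the product. The first thing to check is that
\[
t_j=(s_j\cdots s_n)(s_1\cdots s_{j-1})\in T
\]
is an idempotent. Put $p=s_1\cdots s_{j-1}$ and $q=s_j\cdots s_n$, so that $pq=e$. We want $qp\in E(S)$. Here $E$-unitarity enters. Since $e=pq$, in the maximal group image $\sigma$ we get $(qp)\sigma=(q\sigma)(p\sigma)$, which is the identity because $(p\sigma)(q\sigma)$ is the identity in a group. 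In an $E$-unitary inverse semigroup, the elements mapping to the identity of the maximal group image are exactly the idempotents, so $qp$ is an idempotent. Thus each cyclic conjugate $t_j\in E(T)$.

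\emph{Step 2: build an element of $\inv(T)$.} Let $i$ be an index with $s_i$ non-idempotent, and consider
\[
u:=e\,s_1\cdots s_i=s_1\cdots s_n s_1\cdots s_i\in T,\qquad v:=s_{i+1}\cdots s_n\, e\in T.
\]
Then
\[
uv=e^3=e,\qquad vu=s_{i+1}\cdots s_n\,e\,e\,s_1\cdots s_i,
\]
and $vu$ is idempotent by the Step 1 argument, since $vu\sigma=(uv)\sigma$ up to conjugation. I plan to show $v=u^{-1}$ by verifying $uvu=u$ and $vuv=v$; uniqueness of inverses then gives $u^{-1}=v$. Granting that $uv$ and $vu$ are idempotents, $uvu=eu=u$ holds since $u$ begins with $e$. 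For the other identity, $vuv=ve=v$ holds since $v$ ends with $e$. Hence $u^{-1}=v\in T$, so $u\in\inv(T)$. Moreover $uu^{-1}=uv=e$, so $u\mathscr{R}e$.

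\emph{Step 3: $u$ is not an idempotent.} This is the main obstacle, and the choice of $i$ must be made carefully. Take $i$ to be the least index with $s_i\notin E$. Then $s_1,\dots,s_{i-1}$ are idempotents. If $u=e s_1\cdots s_i$ were idempotent, then $E$-unitarity, applied repeatedly to strip off the idempotent left factors $e,s_1,\dots,s_{i-1}$, forces $s_i\in E$, a contradiction. Concretely, $(es_1\cdots s_{i-1})s_i\in E$ with $es_1\cdots s_{i-1}\in E$ gives $s_i\in E$.

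So the conclusion of Step 3 relies on the minimal choice of $i$, and Step 1 relies on the fact that $\sigma$-trivial elements are idempotent in the $E$-unitary setting; I would double-check the latter fact first, as it carries the argument. Taking $s:=u$ then gives the required non-idempotent $s\in\inv(T)$ with $e\mathscr{R}s$.
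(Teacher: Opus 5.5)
Your proof is correct and is essentially the paper's argument: your $u=es_1\cdots s_i$ and $v=s_{i+1}\cdots s_ne$ (with $s_i$ the first non-idempotent factor) are exactly the paper's $eu$ and $ve$. As in the paper, they are mutually inverse elements of $T$ with $uu^{-1}=e$, and $u$ is non-idempotent by $E$-unitarity. Your Step 1 (the maximal group image and idempotency of $vu$) is superfluous, because $uvu=u$ and $vuv=v$ follow directly from $uv=e$; so nothing hinges on the $\sigma$-fact you planned to double-check.
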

\begin{proof}
The backward direction follows since $e = ss^{-1}$.
For the forward direction, suppose $e\in E(T)$ is not purely idempotent-generated and write $e = t_1\dots t_n$ where each $t_i\in T$ and at least one of $t_i$ is not an idempotent. Let $t_i$ be the first non-idempotent element in the product. Write $u = t_1\dots t_i$ and $v = t_{i+1}\dots t_n$, so that $e = uv$. Note that all of $u$, $v$, $eu$, $ve$ are non-idempotents by $E$-unitarity of $S$. Moreover, $eu = (eu)(ve)(eu)$ and $ve = (ve)(eu)(ve)$. This means that $eu$ and $ve$ are inverses of each other. Therefore, $eu \mathscr{R} e = (eu)(ve) = (eu)(eu)^{-1}$, 
completing the proof.
\end{proof}

\begin{lem}
\label{lem:EuniPlainSamePIG}
Let $S$ be an $E$-unitary inverse semigroup and let $T$ be a subsemigroup of $S$. Then, $\PIG(T) = \PIG(\inv(T))$.
\end{lem}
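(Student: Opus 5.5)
We will show the two inclusions $\PIG(T)\subseteq\PIG(\inv(T))$ and $\PIG(\inv(T))\subseteq\PIG(T)$ separately, using Lemma~\ref{lem:EuniPIGGreen} for both $T$ and $\inv(T)$. First note that $\inv(T)$, when non-empty, is an inverse subsemigroup of $S$ and hence $E$-unitary. Since it is a subsemigroup of itself with $\inv(\inv(T))=\inv(T)$, Lemma~\ref{lem:EuniPIGGreen} applies to it too. We also record that every idempotent $e$ of $S$ satisfies $e^{-1}=e$, so $E(T)=E(\inv(T))$. If $\inv(T)$ is empty, then $E(T)$ is empty and both sides are empty, so we may assume $\inv(T)\neq\emptyset$.

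For $\PIG(T)\subseteq\PIG(\inv(T))$, take $e\in\PIG(T)$. Then $e\in E(T)=E(\inv(T))$. Any factorisation $e=s_1\cdots s_n$ with $s_i\in\inv(T)\subseteq T$ is also a factorisation in $T$, so every $s_i$ is idempotent. Hence $e\in\PIG(\inv(T))$. This direction does not even need $E$-unitarity.

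For the reverse inclusion, argue by contraposition. Let $e\in E(\inv(T))=E(T)$ with $e\notin\PIG(T)$. By Lemma~\ref{lem:EuniPIGGreen} applied to $T$, there is a non-idempotent $s\in\inv(T)$ with $e\mathscr{R}s$. The $\mathscr{R}$-relation is taken in $S$, and since $S$ is inverse it is characterised by $ss^{-1}=ee^{-1}=e$. Applying Lemma~\ref{lem:EuniPIGGreen} to the subsemigroup $\inv(T)$, which contains this same $s$ (note $\inv(\inv(T))=\inv(T)$), we get $e\notin\PIG(\inv(T))$. One can also see this directly: $e=ss^{-1}$ with $s,s^{-1}\in\inv(T)$ and $s$ non-idempotent.

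The only point needing care is that the $\mathscr{R}$-relation in Lemma~\ref{lem:EuniPIGGreen} is Green's relation in the ambient $S$, not in $T$. This causes no difficulty, because the backward direction of that lemma only uses the identity $e=ss^{-1}$, which holds in $S$. So no real obstacle is expected; the proof is essentially a bookkeeping consequence of Lemma~\ref{lem:EuniPIGGreen}.
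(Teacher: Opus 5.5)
Your proof is correct and matches the paper's argument: $\PIG(T)\subseteq\PIG(\inv(T))$ is immediate from $E(T)\subseteq\inv(T)\subseteq T$, and for the reverse inclusion you use Lemma~\ref{lem:EuniPIGGreen} to write $e=ss^{-1}$ with $s\in\inv(T)$ non-idempotent, which is exactly what the paper does. Applying the lemma a second time to $\inv(T)$ is unnecessary, since your direct observation that $e=ss^{-1}$ with $s,s^{-1}\in\inv(T)$ already settles it.
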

\begin{proof}
Since $\inv(T)\subseteq T$ and $\PIG(T)\subseteq E(T)\subseteq \inv(T)$, we have that $e\in \PIG(T)$ implies $e\in \PIG(\inv(T))$. For the reverse inclusion, let $e\in E(T)\setminus\PIG(T)$. Lemma~\ref{lem:EuniPIGGreen} implies that there is a non-idempotent $s\in \inv(T)$ such that $e\mathscr{R}s$; in particular, $e = ss^{-1}$. Thus, $e\notin\PIG(\inv(T)))$. This establishes the reverse inclusion and completes the proof.
\end{proof}

\begin{rmk}
\label{rmk:NonEuni}
The two lemmas above do not hold if the ambient inverse semigroup $S$ is not $E$-unitary. Let $S$ be the symmetric inverse semigroup $\mathcal{I}_2$ on $\{1,2\}$. Clearly, $S$ is not $E$-unitary. 
Take $T = \{\id, 0,f,g\}\leq \mathcal{I}_2$, where $\id$ is the identity transformation, $0$ is the empty transformation,
$f = \bigl(\begin{smallmatrix}
		1 & 2 \\
		2 & -
	\end{smallmatrix}\bigr)$,\,
$g  = \bigl(\begin{smallmatrix}
		1 & 2\\
		- & 2
	\end{smallmatrix}\bigr)$.
Notice that $\inv(T) = \{\id, g, 0\}$ and $E(T) = \inv(T) = \PIG(\inv(T))$. Here, $0\in \PIG(\inv(T))\setminus \PIG(T)$.
\end{rmk}

\begin{lem}
\label{lem:subFI1PIG}
Let $S$ be a finitely generated subsemigroup of $\FI_1$. Then, $\PIG(S)$ is finite and $\inv(S)$ is finitely generated.
\end{lem}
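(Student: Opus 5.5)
The plan is to get finiteness of $\PIG(S)$ directly from a finite generating set, then transfer it to $\inv(S)$ via Lemma~\ref{lem:EuniPlainSamePIG} and conclude with the Jones--Trotter criterion (Proposition~\ref{prop:JonesTrotterPIG}).

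\emph{Step 1: $\PIG(S)$ is finite.} Write $S=\langle X\rangle$ with $X\subseteq \FI_1$ finite. Take any $e\in\PIG(S)$ and express it as a product $e=x_1\cdots x_n$ with each $x_i\in X$. Since $e$ is purely idempotent-generated (Definition~\ref{defn:PIG}), every factor $x_i$ is an idempotent, so $x_i\in X\cap\E$. Hence
\[
\PIG(S)\subseteq \langle X\cap \E\rangle .
\]
The idempotents of $\FI_1$ commute, so $\langle X\cap\E\rangle$ is a finitely generated semilattice. It therefore consists of products of distinct elements of $X\cap\E$ and has at most $2^{|X|}$ elements. In fact, by the multiplication formula, a product of idempotents $(-a_i,0,b_i)$ is $(-\max a_i,0,\max b_i)$, which makes this explicit. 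Thus $\PIG(S)$ is finite.

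\emph{Step 2: $\inv(S)$ is finitely generated.} If $\inv(S)=\emptyset$, it is finitely generated by our convention. Otherwise $\inv(S)$ is an inverse subsemigroup of $\FI_1$. Since $\FI_1$ is $E$-unitary, Lemma~\ref{lem:EuniPlainSamePIG} gives $\PIG(\inv(S))=\PIG(S)$, which is finite by Step~1. Proposition~\ref{prop:JonesTrotterPIG} then says $\inv(S)$ is finitely generated as an inverse semigroup, and Proposition~\ref{prop:invplaingen} upgrades this to finite generation as a plain semigroup.

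I do not expect a genuine obstacle. The only point needing care is that "purely idempotent-generated" is applied with respect to factorisations inside $S$, so that a factorisation into generators from $X$ is admissible. The other point is that the comparison of $\PIG$ in $S$ and in $\inv(S)$ must go through Lemma~\ref{lem:EuniPlainSamePIG}, since finiteness of $\PIG(\inv(S))$ is not immediate otherwise: more factorisations are forbidden in the smaller semigroup.
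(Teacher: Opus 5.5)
Your proof is correct and follows the paper's argument. The paper obtains $\PIG(S)\subseteq\langle X\setminus S^\circ\rangle$ by noting that $S^\circ$ is an ideal, whereas you apply the definition directly to get the slightly larger bound $\langle X\cap\E\rangle$; Step~2 (via $E$-unitarity, Lemma~\ref{lem:EuniPlainSamePIG} and Proposition~\ref{prop:JonesTrotterPIG}) is identical to the paper's.
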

\begin{proof}
For the first statement, recall that $S^\circ$ is either empty or an ideal of $S$.
Thus, we have $\PIG(S) \subseteq \langle X\setminus S^\circ\rangle \subseteq  E(S)$ and so $\PIG(S)$ is finite.
Note that this argument does not use the assumption that $S\leq \FI_1$.
The second statement now follows from the fact that $\FI_1$ is $E$-unitary, 
Lemma~\ref{lem:EuniPlainSamePIG} and Proposition~\ref{prop:JonesTrotterPIG}.
\end{proof}

\subsection{Periodic subsets}
\label{subsec:Periodic}

In this subsection, we introduce the notion of periodic subsets of $\FI_1$. It is closely related to some considerations in Silva's paper \cite{SilvaRationalSubset} regarding rational subsets of the monogenic free inverse monoid, which we will review and use in the next section. At that point we will dwell in more detail on the salient relationships between them and what we are about to present here (Remark \ref{rem:salient}).

\begin{defn}
\label{defn:PeriodicSets}
Let $L\subseteq \PE$ and let $n,\nu$ be natural numbers. We say that $L$ is \emph{horizontally periodic from $n$ with period $\nu$} if it satisfies the following: for all $a,b\in\mathbb{N}_0$ with $a\geq n$, we have 
\[
(\PE_{a,b}\cap L)\mu = (\PE_{a+\nu, b}\cap L)\mu. 
\]
Similarly, we say that $L$ is \emph{vertically periodic from $n$ with period $\nu$} if it satisfies the following: for all $a,b\in\mathbb{N}_0$ with $b\geq n$, we have 
\[
(\PE_{a,b}\cap L)\mu = (\PE_{a,b+\nu}\cap L)\mu.
\]
We say that $L$ is \emph{periodic from $n$ with period $\nu$} if it is both horizontally and vertically periodic from $n$ with period $\nu$. We also say that $L$ is \emph{$\nu$-periodic from} $n$.
\end{defn}
\begin{rmk}
\label{rmk:PeriodicSets}
\begin{rmkenumerate}
    \item The terms `horizontal' and `vertical' stem from the following viewpoint.
Recall that $\PE = \bigcup_{a,b\in\mathbb{N}_0}\PE_{a,b}$ and that each $\PE_{a,b}\cong\mathbb{N}_0$ if $a+b>0$ and $\PE_{0,0}\cong\mathbb{N}$. Thus, we may represent $\PE$ as the grid $\mathbb{N}_0\times\mathbb{N}_0$
 with each point $(a,b)$ corresponding to $\PE_{a,b}$. 
For a subset $L\subseteq \PE$, view it as the disjoint union of the sets $L\cap \PE_{a,b}$, and then further interpret each of those sets as a subset of $\mathbb{N}_0$ via $\mu$.
If $L$ is, say, vertically periodic, then these subsets of $\mathbb{N}_0$ periodically repeat along every vertical line $\{(a,b) : b\in\mathbb{N}_0\}$. 
    \item We have a simpler description of periodic behaviour when $L\subseteq \PE$ consists solely of idempotents. In this case, $L$ is horizontally periodic from $n$ with period $\nu$ if the following holds: for all $a,b\in\mathbb{N}_0$ with $a\geq n$, $(-a, 0, b)\in L \Leftrightarrow (-(a+\nu), 0, b)\in L$. The description for vertical periodicity is analogous.
   
    \item If $L\subseteq\PE$ is horizontally (resp.\ vertically) periodic from $n$ with period $\nu$, then $L$ is also horizontally (resp.\ vertically) periodic from $n'$ with period $m\nu$ where $n'\geq n$ and $m\in\mathbb{N}$.
    \item If $L$ is vertically periodic from $n_1$ with period $\nu_1$ and is horizontally periodic from $n_2$ with period $\nu_2$, then $L$ is periodic from $n$ with period $\nu$ where $n\geq \max(n_1, n_2)$ and $\nu$ is any common multiple of $\nu_1$ and $\nu_2$.
\end{rmkenumerate}
\end{rmk}

There is an obvious dual notion of periodicity for subsets of $\NE$, but we will not require this. Instead, if $L\subseteq \NE$, then we revert our considerations to $L^{-1}\subseteq \PE$.

For a two-sided subsemigroup $S$ of $\FI_1$, we will consider periodic behaviour of $\PE(S)$ and $\PE(S^{-1})$. In this case, it will be more convenient to have the same periodicity and its starting point for both of them. Throughout the paper, we say that a two-sided $S$ is \emph{periodic from $n$ with period $\nu$} if both $\PE(S)$ and $\PE(S^{-1})$ are $\nu$-periodic from $n$; we sometimes say that $S$ is \emph{$\nu$-periodic from $n$}. From the remark above, note that if $\PE(S)$ (resp. $\PE(S^{-1})$) is periodic from $n_1$ (resp. $n_2$) with period $\nu_1$ (resp. $\nu_2$), then we can always choose $n\geq \max(n_1, n_2)$ and $\nu$ as a common multiple of $\nu_1$ and $\nu_2$ so that both $\PE(S)$ and $\PE(S^{-1})$ are $\nu$-periodic from $n$.

We define two special numbers associated with periodic subsets.

\begin{defn}
\label{defn:HorizonVertNumbers}
Let $L\subseteq\PE$ be periodic from $n$ with period $\nu$, and let
\[
U_H:= \bigcup \{ L\cap \P_{a,b}\colon a\geq n,\ b\geq 0\}
\quad\text{and}\quad
U_V:= \bigcup \{ L\cap \P_{a,b}\colon a\geq 0,\ b\geq n\}.
\]
Define:
\begin{alignat*}{4}
h_{L} &:=
\min\bigl\{x+d: (-c, x, x+d) \in U_H\bigr\}\ && \text{if } U_H\neq \emptyset,\quad&&
h_L:=0\ &&\text{if } U_H=\emptyset;
\\
v_{L} &:=
\min\bigl\{c+x: (-c, x, x+d)\in U_V\bigr\}  &&\text{if } U_V\neq \emptyset,&&
v_L:=0&& \text{if } U_V=\emptyset.
\end{alignat*}
\end{defn}

We will be concerned with periodic two-sided subsemigroups $S\leq\FI_1$ in the next subsection. 
In that case we will need the $h$- and $v$-parameters for each of the one-sided restrictions $\PE(S)$ and $\NE(S)$. We want to establish that these are all non-zero. First, however, we prove the following general fact, which does not assume periodicity:

\begin{lem}
\label{lem:SInvPhi}
Let $S\leq \FI_1$ be two-sided. Then $S\mu = (\inv(S))\mu = p\mathbb{Z}$ for some $p\in \mathbb{N}$.
\end{lem}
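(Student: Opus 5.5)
We will show $S\mu=(\inv(S))\mu$ and note that $S\mu$ is a subgroup $p\mathbb{Z}$. Since $\mu$ is a homomorphism onto $\mathbb{Z}$, $S\mu$ is a subsemigroup of $\mathbb{Z}$. Because $S$ is two-sided, $S\mu$ contains a positive and a negative integer. A subsemigroup of $\mathbb{Z}$ containing both signs is a subgroup, hence equals $p\mathbb{Z}$ for some $p\in\mathbb{N}$. Indeed, if $m>0>-k$ lie in $S\mu$, then $-m=(k-1)m\cdot(-k)/\ldots$; more simply, $-m = (m-1)\cdot m\cdot 0 + \ldots$ is awkward, so we argue via $km+m(-k)=0$. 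Then $-m=(k-1)m+m(-k)$ lies in $S\mu$ (for $k\ge1$), and similarly $k\in S\mu$. So $S\mu$ is closed under negation and is a group. This is the remark already made after the definition of two-sided, so we may just cite it.

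Since $\inv(S)\subseteq S$, we have $(\inv(S))\mu\subseteq S\mu$. For the reverse inclusion, the key step is to show that every $t\in S$ has $t\mu\in(\inv(S))\mu$. We will use the construction from the proof of Proposition~\ref{prop:subFI1iff}. Fix $u_1\in\P(S)$ with $u_1\mu=x_1>0$ and $u_2\in\N(S)$ with $u_2\mu=-x_2<0$. The elements $w=u_1^{x_2}u_2^{x_1}u_1^{x_2}$ and $w'=u_2^{x_1}u_1^{x_2}u_2^{x_1}$ are mutually inverse and lie in $S$, and $e:=ww'\in E(\inv(S))$ is an idempotent of $S$ whose Munn tree is large.

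Given arbitrary $t\in S$, we consider $s:=e\,t\,e$, or better a product such as $w^{k} t\, w'^{k}$-type words, chosen so that $s\in S$ and $s^{-1}\in S$. The inverse $s^{-1}$ should be expressible as $e\,t'\,e$ where $t'\in S$ satisfies $t'\mu=-t\mu$. Such a $t'$ exists as a product of $u_1$'s and $u_2$'s, since $S\mu$ is a group. The point is that once the Munn tree of $e$ covers the supports of $t$ and $t'$, both $ete$ and $et'e$ have support equal to that of $e$. In $\FI_1$ an element is determined by its support interval together with $\mu$, via the triple. So $et'e=(ete)^{-1}$ exactly when the interval of $et'e$ is the interval of $ete$ shifted by $t\mu$; in other words, the interval of $(-a,x,b)^{-1}$ is $[-(a+x),\,b-x]$. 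Hence we should use $e$ on one side and a shifted idempotent on the other, namely $s=e\,t\,f$ and $s'=f\,t'\,e$. Here $f$ is an idempotent of $\inv(S)$, for instance $e$ conjugated appropriately, or simply a sufficiently large power product like $(w'w)^{N}$.

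The main obstacle is this bookkeeping with intervals. Our fallback is cleaner: take $e=(-A,0,B)\in E(S)$ with $A,B$ huge, obtained from $ww'$ products, and take $g=t\,e'\,t'$ with $e'$ suitable. We then compute directly with the multiplication formula that $s=e\,t\,t'\,e$ is idempotent and $ete$, $et'e$ are inverses of each other whenever $A,B$ exceed $|t\mu|$ and the $\lambda,\rho$ values of $t$ and $t'$. The formula $\max(a_1,a_2-x_1)$ makes the supports all collapse to $[-A,B]$ in $ete$, giving $ete=(-A,t\mu,B)$ with $B\ge t\mu$. The inverse is then $(-(A+t\mu),-t\mu,B-t\mu)$, which does not match $(-A,-t\mu,B)$. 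To fix this we will instead use $e\,t\,e$ and $e\,t'\,e$ where $e$ has $A\ge$ everything. Then $(ete)(et'e)(ete)=e\,t\,e\,t'\,e\,t\,e$ has support $[-A,B]$ and displacement $t\mu$, so it equals $ete$; symmetrically for the other product. Hence $ete$ and $et'e$ are mutual inverses, and this works because supports are the whole interval containing both endpoints. Therefore $ete\in\inv(S)$ with $(ete)\mu=t\mu$, completing the proof. The only real check is that $E(S)$ contains idempotents with arbitrarily large $A$ and $B$, and that the product formula gives the support claims. Both follow from taking powers of $ww'$-type products built from $u_1,u_2$.
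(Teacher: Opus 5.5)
Your strategy can be made to work: sandwich an arbitrary $t\in S$ between copies of a large idempotent $e\in E(S)$, and pair $ete$ with $et'e$, where $t'\in S$ has $t'\mu=-t\mu$. However, the computation you give for the key step is wrong, and as written the argument contradicts itself.

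With $e=(-A,0,B)$, $t=(-a,x,b)$, $A\ge a$, $B\ge b$, you correctly get $et=(-A,x,B)$. But the second copy of $e$ is attached at the terminal vertex $x$, so the multiplication formula gives
\[
ete=\bigl(-\max(A,A-x),\,x,\,\max(B,B+x)\bigr),
\]
whose support is $[-A,B]\cup[x-A,x+B]$, not $[-A,B]$. Your formula $ete=(-A,x,B)$ is false; your own observation that its inverse is not $(-A,-x,B)$ already signals this. The ``fix'' in your last paragraph is the same construction, justified by the same false claim that $ete$ and $(ete)(et'e)(ete)$ have support $[-A,B]$.

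The correct check is short. Assume also $A\ge t'\lambda$ and $B\ge t'\rho$. Then $et'e=\bigl(-\max(A,A+x),-x,\max(B,B-x)\bigr)$. The inverse formula $(-c,y,d)^{-1}=(-(c+y),-y,d-y)$ gives $(ete)^{-1}=\bigl(-\max(A+x,A),-x,\max(B-x,B)\bigr)$. These coincide, so $ete\in\inv(S)$ with $(ete)\mu=t\mu$. Equivalently, $ete$ and $(ete)(et'e)(ete)$ both have support $[-A,B]\cup[x-A,x+B]$ and displacement $x$.

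You also need large idempotents in $S$, and powers of the idempotent $ww'$ are just $ww'$, so they do not supply them. Use instead, e.g., $u_1^{Nx_2}u_2^{2Nx_1}u_1^{Nx_2}\in S$. It has $\mu$-value $0$, hence is an idempotent, and its support contains $[-Nx_1x_2,Nx_1x_2]$.

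Once repaired, your route is genuinely different from the paper's, and much longer. The paper only needs one element. Since $S\mu=p\mathbb{Z}$, pick $u_1,u_2\in S$ with $u_1\mu=p$ and $u_2\mu=-p$. Then $u_1u_2u_1$ and $u_2u_1u_2$ are mutually inverse, so $p\in(\inv(S))\mu$. Since $(\inv(S))\mu\subseteq S\mu$ is closed under negation (because $\inv(S)$ is closed under inverses), it equals $p\mathbb{Z}$.

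You had this idea in hand with $w,w'$. With arbitrary $x_1,x_2$, however, it only yields $x_1x_2\in(\inv(S))\mu$. Choosing $u_1\mu=-u_2\mu=p$ lets you drop the exponents and finish at once.

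What your approach buys is a uniform elementwise lift $t\mapsto ete$ of $S$ into $\inv(S)$ preserving $\mu$. That is more than the lemma requires, at the cost of producing large idempotents and of the support bookkeeping where the error crept in.
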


\begin{proof}
Since $S$ is two-sided, $S\mu = p\mathbb{Z}$ for some $p\in\mathbb{N}$. Thus, there are $u_1 = (-a_1, p, b_1)$ and $u_2 = (-a_2, -p, b_2)$ in $S$. It is a routine matter to check that $u_1u_2u_1$ and $u_2u_1u_2$ are inverses of each other, with the images under $\mu$ being $p$ and $-p$, respectively.
\end{proof}

\begin{lem}
\label{lem:NonZeroNums}
Let $S \leq \FI_1$ be two-sided. Suppose that $S$ is periodic from $n$ with period $\nu$. Then, none of $h_{\PE(S)}$, $h_{\PE(S^{-1})}$, $v_{\PE(S)}$, $v_{\PE(S^{-1})}$ are zero.
\end{lem}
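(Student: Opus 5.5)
The plan is to observe first that $h_L$ and $v_L$ can only vanish because of emptiness, and then to exhibit explicit elements showing that the relevant sets are non-empty. Let $L\subseteq\PE$ be periodic, and let $U_H,U_V$ be as in Definition~\ref{defn:HorizonVertNumbers}. Every element $(-c,x,x+d)$ of $U_H$ or $U_V$ lies in some $\P_{a,b}$, so $x\geq 1$ and $c,d\geq 0$. Hence $x+d\geq 1$ and $c+x\geq 1$. Thus $h_L=0$ if and only if $U_H=\emptyset$, and $v_L=0$ if and only if $U_V=\emptyset$.

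It therefore suffices to show the following for $L=\PE(S)$ and for $L=\PE(S^{-1})$. First, $L$ contains a positive element $(-a,x,x+b)$ with $a\geq n$. Second, $L$ contains a positive element with $b\geq n$. The periodicity hypothesis plays no real role beyond fixing $n$.

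Since $S$ is two-sided, Lemma~\ref{lem:SInvPhi} supplies $u_1=(-a_1,p,b_1)\in S$ and $u_2=(-a_2,-p,b_2)\in S$ with $p\geq 1$. Fix $k\in\mathbb{N}$ with $kp\geq n$.

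To get a large left extent, consider $w_1:=u_2^k u_1^{k+1}\in S$. We have $w_1\mu=p>0$, so $w_1$ is positive. By the multiplication formula, $u_2^k$ has $\lambda$-value at least $kp$, because its terminal vertex lies $kp$ to the left of its initial vertex. Right multiplication never decreases $\lambda$, so $w_1\lambda\geq kp\geq n$, and hence $w_1\in U_H$ for $L=\PE(S)$.

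To get a large right extent, consider $w_2:=u_1^{k+1}u_2^k\in S$. Again $w_2\mu=p$. The $\rho$-value of $u_1^{k+1}$ is at least $(k+1)p$, and right multiplication does not decrease $\rho$, so $w_2\rho\geq (k+1)p$. Writing $w_2=(-c,p,p+d)$, we get $d=w_2\rho-p\geq kp\geq n$, and hence $w_2\in U_V$.

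Since $S^{-1}$ is also a two-sided subsemigroup of $\FI_1$, the same argument applies to it with $u_2^{-1},u_1^{-1}$ (or simply by replacing $S$ with $S^{-1}$ throughout). This handles $h_{\PE(S^{-1})}$ and $v_{\PE(S^{-1})}$. The only step needing care is the monotonicity of $\lambda$ and $\rho$ under multiplication, which is read off directly from the $\max$ formula for the product in $\FI_1$. I expect no real obstacle beyond this routine check.
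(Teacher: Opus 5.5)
Your proof is correct and follows essentially the same route as the paper. Both arguments exhibit explicit positive elements of $S$ (and of $S^{-1}$) whose $\lambda$-value, respectively $\rho-\mu$ value, is at least $n$. The paper uses $u^iu^{-i}u$ for a positive non-idempotent $u\in\inv(S)$, whereas you build $u_1^{k+1}u_2^k$ and $u_2^ku_1^{k+1}$ directly from a positive and a negative element of $S$; this avoids passing through $\inv(S)$, and you also make explicit the point, tacit in the paper, that $h_L$ and $v_L$ can vanish only when $U_H$ or $U_V$ is empty.
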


\begin{proof}
We show $v_{\PE(S)}$ is non-zero. Since $\inv(S)$ is not a semilattice, Lemma~\ref{lem:SInvPhi} implies that
there is a non-idempotent $u\in \inv(S)$; we assume $u$ is positive without loss of generality. Observe that for $i\in\mathbb{N}$ the element $u^{i}u^{-i}u$ is positive and the value $(u^{i}u^{-i}u)\rho-(u^{i}u^{-i}u)\mu$ increases with~$i$. In particular, $u^{i}u^{-i}u\in \P_{a,b}$ where $a = (u^{i}u^{-i}u)\lambda$ and $b = (u^{i}u^{-i}u)\rho-(u^{i}u^{-i}u)\mu$. Thus, $\bigcup_{a\geq 0, b\geq n}(\PE(S)\cap \P_{a,b})$ is non-empty and so $v_{\PE(S)}\neq 0$.
The argument for $h_{\PE(S)}$ is analogous and for $v_{\PE(S^{-1})}$ and $h_{\PE(S^{-1})}$ dual.
\end{proof}

\subsection{Two-sided subsemigroups}
\label{subsec:2side}
In this subsection, we establish the characterisation for finitely generated two-sided subsemigroups of $\FI_1$. We begin this by fixing notation and introducing an auxiliary result from \cite{OliveiraSilva}.

Throughout this subsection, unless stated otherwise, $S$ will stand for a two-sided subsemigroup of $\FI_1$. Further, $p$ will denote the unique natural number such that $S\mu = (\inv(S))\mu = p\mathbb{Z}$ (Lemma~\ref{lem:SInvPhi}). 

Motivated by \cite{OliveiraSilva}, we define:
\begin{align*}
    a_{S}^\ast := \min\bigl\{a\in\mathbb{N}_0: (-a, qp, b)\in \inv(S)^\circ\bigr\};\quad\;
    b_{S}^\ast := \min\bigl\{b\in\mathbb{N}_0: (-a, qp, b)\in \inv(S)^\circ\bigr\}.
\end{align*}
Note that $\inv(S)^\circ := \inv(S)\setminus\PIG(\inv(S)) = \inv(S)\setminus\PIG(S)$ by Lemma~\ref{lem:EuniPlainSamePIG} and $E$-unitarity of $\FI_1$. Thus, $a_S^\ast$ (resp.\ $b_S^\ast$) is the least $\lambda$ value (resp.\ $\rho$ value) of triples in $\inv(S)\setminus\PIG(S)$. It is also useful to determine $a_S^\ast$ and $b_S^\ast$ from idempotents. For each triple $u=(-a, qp, b)\in \inv(S)^\circ$, we have an idempotent $uu^{-1} = (-a, 0, b)\in E(\inv(S)^\circ)$. Observe from Lemma~\ref{lem:EuniPIGGreen} that $E(\inv(S)^\circ)= E(\inv(S))\setminus\PIG(S) = E(S)\setminus\PIG(S)$. Hence, $a_S^\ast$ (resp.\ $b_S^\ast$) is the least $\lambda$ value (resp.\ $\rho$ value) of triples in $E(S)\setminus\PIG(S)$.

\begin{prop}[{\cite[Lemma 2.2]{OliveiraSilva}}]
\label{prop:OliveiraSilva}
Let $S$ be as above. Then, there are $\alpha_S = (-a_{S}^\ast, p, p+b_\alpha)$ and $\beta_S = (-(a_\beta+p), -p, b_S^\ast)$ in $\inv(S)$ for some $a_\beta, b_\alpha\in\mathbb{N}_0$. \qed
\end{prop}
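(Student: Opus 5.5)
The plan is to get both elements by conjugating a fixed generator of $\mu$-value $\pm p$ by a large power of a suitable non-idempotent of $\inv(S)$. The minimality in the definitions of $a_S^\ast$ and $b_S^\ast$ does most of the work. I treat $\alpha_S$ in detail; $\beta_S$ is the left–right dual, obtained by applying the same argument to $\rho$ instead of $\lambda$ (equivalently, to $S^{-1}$).

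\emph{Step 1: a positive element realising $a_S^\ast$.} By the discussion preceding the proposition, $a_S^\ast$ is the least $\lambda$-value of an element of $\inv(S)^\circ$. Pick $s=(-a_S^\ast, qp, b)\in\inv(S)^\circ$ attaining it. First I check that $s$ cannot be an idempotent. If it were, it would lie in $E(S)\setminus\PIG(S)$, so Lemma~\ref{lem:EuniPIGGreen} provides a non-idempotent $s'\in\inv(S)$ with $s'\mathscr{R}s$. That $s'$ has the same $\lambda$-value and lies in $\inv(S)^\circ$, so I may replace $s$ by $s'$; thus $q\neq 0$. Next I check that $s$ cannot be negative. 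If $q<0$, the inverse formula gives $s^{-1}=(-(a_S^\ast+qp),-qp,b-qp)$. This is a non-idempotent of $\inv(S)$, hence lies in $\inv(S)^\circ$, and its $\lambda$-value $a_S^\ast+qp$ is strictly smaller than $a_S^\ast$, contradicting minimality. Hence $q>0$ and $s$ is positive. For positive $s$, the multiplication formula gives $\lambda(s^m)=a_S^\ast$ for all $m$, and $s^{-m}=(-(a_S^\ast+mqp),-mqp,\ast)$.

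\emph{Step 2: conjugation.} By Lemma~\ref{lem:SInvPhi}, there is $u_1=(-a_1,p,b_1)\in\inv(S)$. For $m\in\mathbb{N}$ put $w_m:=s^m u_1 s^{-m}\in\inv(S)$; it has $w_m\mu=p$. Applying $\lambda(xy)=\max(\lambda x,\lambda y-\mu x)$ twice gives
\[
\lambda(w_m)=\max\bigl(a_S^\ast,\ a_1-mqp,\ (a_S^\ast+mqp)-mqp-p\bigr)=\max\bigl(a_S^\ast,\ a_1-mqp\bigr).
\]
This equals $a_S^\ast$ once $mqp\ge a_1$. Since $w_m$ is positive with $\mu=p$, it has the form $(-a_S^\ast,p,p+b_\alpha)$ with $b_\alpha=\rho(w_m)-p\ge 0$. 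Then $\alpha_S:=w_m$ is as required.

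\emph{Step 3: the dual.} Symmetrically, take $t\in\inv(S)^\circ$ with $\rho(t)=b_S^\ast$. The same two arguments show that $t$ is a non-idempotent and must be negative: otherwise $t^{-1}$ would have strictly smaller $\rho$. Then conjugate $u_2=(-a_2,-p,b_2)\in\inv(S)$ as $t^m u_2 t^{-m}$, using $\rho(xy)=\max(\rho x,\rho y+\mu x)$. This gives an element of $\inv(S)$ with $\mu=-p$ and $\rho=b_S^\ast$, i.e.\ of the form $(-(a_\beta+p),-p,b_S^\ast)$.

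The main obstacle is Step~1: securing a \emph{positive} element of $\inv(S)^\circ$ with $\lambda$-value exactly $a_S^\ast$. This is where Lemma~\ref{lem:EuniPIGGreen} (for the idempotent case) and the minimality trick with the inverse (for the negative case) are needed. Once such an $s$ exists, the conjugation in Step~2 is routine bookkeeping with the multiplication formula.
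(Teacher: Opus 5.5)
Your proposal is correct. The paper itself gives no proof here: it quotes \cite[Lemma 2.2]{OliveiraSilva} and only remarks that the argument there, written for finitely generated inverse subsemigroups, carries over. So the relevant comparison is that you supply this transfer in a self-contained way.

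I checked each step.
\begin{itemize}
\item A minimiser of $\lambda$ on $\inv(S)^\circ$ can be taken non-idempotent. This uses Lemma~\ref{lem:EuniPIGGreen} together with $\PIG(\inv(S))=\PIG(S)$ from Lemma~\ref{lem:EuniPlainSamePIG}, since $\mathscr{R}$-related triples of $\FI_1$ share both $\lambda$ and $\rho$.
\item The minimiser cannot be negative. Otherwise $(-(a_S^\ast+qp),-qp,b-qp)$ would be a non-idempotent of $\inv(S)$, hence an element of $\inv(S)^\circ$, with strictly smaller $\lambda$.
\item The conjugation formulas are right: $\lambda(s^mu_1s^{-m})=\max(a_S^\ast,\,a_1-mqp,\,a_S^\ast-p)$, and dually $\rho(t^mu_2t^{-m})=\max(b_S^\ast,\,b_2-mrp,\,b_S^\ast-p)$ when $t\mu=-rp$.
\item Both conjugates lie in $\inv(S)$ and have $\mu$-values $\pm p$. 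This forces $b_\alpha\geq 0$ and $a_\beta\geq 0$.
\end{itemize}

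The one point you leave implicit is that $\inv(S)^\circ\neq\emptyset$, so that $a_S^\ast$ and $b_S^\ast$ are defined at all. This follows from Lemma~\ref{lem:SInvPhi}, because non-idempotents are never purely idempotent-generated.

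What your route buys is the exact generality the paper needs. It uses only $E$-unitarity (through Lemmas~\ref{lem:EuniPIGGreen} and~\ref{lem:EuniPlainSamePIG}) and Lemma~\ref{lem:SInvPhi}. It needs no finite-generation hypothesis on $S$ or on $\inv(S)$. That matches the form in which the paper states and uses the proposition, for an arbitrary two-sided $S$, as in Lemma~\ref{lem:LargeabInv}. The paper's remark asserts this transfer without details; your argument does it explicitly.
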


\begin{rmk}
The definitions of $a_S^\ast$ and $b_S^\ast$ and the statement above are slightly different in \cite{OliveiraSilva}, as the focus there is on \emph{finitely generated inverse} subsemigroups of $\FI_1$. Nonetheless, this does not affect the result and the same proof applies.
\end{rmk}

For the remainder of this section we consider the parameters $a_{S}^\ast, b_S^\ast, a_\beta, b_\alpha$ and the elements $\alpha_S$ and $\beta_S$ to be fixed. When the context is clear, for example if there is a single two-sided subsemigroup under consideration, we omit the subscript $S$ in this notation.

We now have all the ingredients to state the theorem characterising finitely generated two-sided subsemigroups:

\begin{thm}
\label{thm:2side}
Let $S$ be a two-sided subsemigroup of the monogenic free inverse semigroup $\FI_1$. Then, $S$ is finitely generated if and only if the following hold:
\begin{thmenumerate}
    \item \label{it:2side1} $\inv(S)$ is finitely generated;
    \item \label{it:2side2} $S$ is periodic from $n$ with period $\nu$ for some $n,\nu\in\mathbb{N}$;
    \item \label{it:2side3} $a^\ast \leq \min(v_{\PE(S)}, v_{\PE(S^{-1})})$ and $b^\ast \leq \min(h_{\PE(S)}, h_{\PE(S^{-1})})$.
\end{thmenumerate}
\end{thm}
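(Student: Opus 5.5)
I will prove the two directions separately. The forward direction collects necessary conditions from a finite generating set $X$; the backward direction builds an explicit finite generating set.

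\emph{Forward direction.} Let $S=\langle X\rangle$ with $X$ finite. Condition \ref{it:2side1} is Lemma~\ref{lem:subFI1PIG}.

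For \ref{it:2side3}, I will argue about $v_{\PE(S)}$; the other three inequalities are analogous or dual. Take $u=(-c,x,x+d)\in\PE(S)$ with $d\geq n$ large, realising $c+x=v_{\PE(S)}$. Write $u$ as a product of generators and look at the Munn tree of this product. The right end lies far out and has been reached by the path, and the path then returns to the terminal vertex. So some subproduct (a prefix times a suffix, or a factor) yields an element of $\inv(S)$ that is not purely idempotent-generated. By Lemma~\ref{lem:EuniPIGGreen}, its range idempotent lies in $E(S)\setminus\PIG(S)$ and has $\lambda$-value at most $c+x$. Since $a^\ast$ is the least $\lambda$-value on $E(S)\setminus\PIG(S)$, this gives $a^\ast\le c+x$. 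The delicate point is making ``returns from the far right'' precise. I plan to use that once $d$ exceeds the maximal $\rho$ of the generators, the product must contain a non-idempotent subword $w$ whose completion $w w'$ is idempotent, as in the proof of Lemma~\ref{lem:EuniPIGGreen}.

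For periodicity \ref{it:2side2}, I would go through a rational description. Elements of $S$ are products of words over $X$, and the triple $(\lambda,\mu,\rho)$ of a word is computable by a finite automaton with counters that record running max and min. By Silva's description of rational subsets of $\FIM_1$, which will be reviewed in the next section, the sets $(\PE_{a,b}\cap S)\mu$ are then eventually periodic in $a$ and in $b$. If that route is not available here, I would argue directly: for $a\geq n$ large, using $\alpha,\beta$ and the finite generating set, show that $\PE_{a,b}\cap S$ and $\PE_{a+\nu,b}\cap S$ correspond via multiplication on the left by a fixed idempotent-free ``stretching'' element built from powers of $\beta$ and $\alpha$, together with its reverse. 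Either way, I will use Remark~\ref{rmk:PeriodicSets} to combine the horizontal and vertical periods.

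\emph{Backward direction.} Assume \ref{it:2side1}--\ref{it:2side3}, and let $Y$ be a finite generating set for $\inv(S)$, which contains $\alpha$ and $\beta$. The grid $\mathbb{N}_0\times\mathbb{N}_0$ splits into the finite box $[0,n+\nu]^2$ and the periodic region. For each $(a,b)$ in the box, $\PE_{a,b}\cap S$ is a subsemigroup of $\PE_{a,b}$, which is a copy of $\mathbb{N}_0$ (or of $\mathbb{N}$ when $a=b=0$), hence finitely generated by Proposition~\ref{prop:SitSiu}. The same holds for $\PE(S^{-1})$. Let $X$ be $Y$ together with all these finite generating sets, taken for the box up to one period beyond $n$.

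It remains to show that every $u\in\PE_{a,b}\cap S$ with, say, $a\geq n+\nu$ lies in $\langle X\rangle$. Periodicity provides $u'\in\PE_{a-\nu,b}\cap S$ with $u'\mu=u\mu$. By induction on $a$, it suffices to obtain $u$ from $u'$ by multiplying on the left by an element of $\langle Y\rangle$ that extends the left end by exactly $\nu$ without changing $\mu$ or the right end. Such an element should be an idempotent $e=(-a,0,b')$ in $E(S)$ with $b'\le$ the relevant right extent. It exists precisely because of \ref{it:2side3}, which guarantees $a^\ast\le v$, so that non-PIG idempotents are available at every needed height. The idempotents themselves are products of $\alpha,\beta$ and their powers, since the elements $\beta^k\alpha^k$ have left extent growing with $k$ and right extent $b^\ast$.

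I expect the main obstacle to be exactly this final matching step: ensuring that the idempotent has the right $\lambda$-value modulo $\nu$ and does not enlarge $\rho$. This is where the inequalities $b^\ast\le h$ and $a^\ast\le v$ must enter, and I will handle it by choosing $\nu$ to be a multiple of $p$ and enlarging $n$.
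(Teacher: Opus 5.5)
\textbf{There is a genuine gap in the backward direction.} Your reduction $u=eu'$ needs, for every $u=(-a,qp,qp+b)\in\PE(S)$ with $a\ge n+\nu$, an idempotent $e\in E(S)$ with $e\lambda=a$ \emph{exactly} and $e\rho\le qp+b$. You never show that such an $e$ exists.

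The family you name does not supply it. We have $\beta^k\alpha^k=(-(a_\beta+kp),0,b_\alpha)$, so its right extent is $b_\alpha$ rather than $b^\ast$. Its $\lambda$-values also form a single residue class modulo $p$, whereas large $\lambda$-values of $\PE(S)$ can lie in other classes. For example, for $S=\langle(0,2,2),(-2,-2,0),(-3,0,1)\rangle$, every $\PE_{a,0}$ with $a\ge 2$ contains positive elements of $S$. Taking $\nu$ to be a multiple of $p$ does nothing about this. Moreover, the hypothesis that governs this horizontal step is $b^\ast\le h_{\PE(S)}$, not $a^\ast\le v$.

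The missing idea is to build $e$ from $u$ itself. For $a\ge a_\beta$ we have $\beta^{q}u=(-(a+qp),0,\max(b^\ast,b))\in E(S)$, and $b^\ast\le h_{\PE(S)}\le qp+b$ keeps its $\rho$ small enough. It remains to lower $\lambda$ by exactly $qp$. The paper does this using the $p$-periodicity of $E(S)$ (Lemma~\ref{lem:PeriodicSemilattice}). That lemma is available because \ref{it:2side1} makes $\PIG(S)$ finite, by Lemma~\ref{lem:EuniPlainSamePIG} and Proposition~\ref{prop:JonesTrotterPIG}. Alternatively, $\beta^{\nu q}u^{\nu}=(-(a+\nu qp),0,\max(b^\ast,b))$ together with \ref{it:2side2} read at $\mu$-value $0$ does the same job.

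Either way you obtain $e=(-a,0,\max(b^\ast,b))\in\inv(S)=\langle Y\rangle$ and $u=eu_1$ with $u_1$ in the box, in a single step. The remaining cases are as follows:
\begin{itemize}
\item Large $b$ is dual, using $u\alpha^{-q}$ and $a^\ast\le v_{\PE(S)}$.
\item Elements with both coordinates large lie in $\inv(S)$ by Lemma~\ref{lem:LargeabInv}.
\item To reach $\N(S)$ you must add the \emph{inverses} of the box generators of $\PE(S^{-1})$.
\end{itemize}

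\textbf{Forward direction.} Obtaining \ref{it:2side2} from the Cut and Paste Lemma, applied to the rational sets $S=XX^\ast$ and $S^{-1}$, is legitimate. The paper notes this in Remark~\ref{rem:salient}, and it is shorter than the paper's hands-on argument. Your fallback with a non-idempotent ``stretching'' element would not work, since multiplying by it changes $\mu$.

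For \ref{it:2side3}, however, the step as written fails. A subproduct of a factorisation of $u$ need not be idempotent or lie in $\inv(S)$, and you derive no bound on $\lambda$. You need to balance $\mu$ with powers. Write $u=(-c,qp,qp+d)=x_1\cdots x_m$, and let $x_j=(-(c_j+r_jp),-r_jp,d_j)$ be the last negative generator; one exists once $d$ exceeds $g\rho$ for all $g\in X$. Then $u^{r_j}x_j^{q}$ is an idempotent of $S$. Being a product of non-idempotents, it is not in $\PIG(S)$, and its $\lambda$-value is $\max(c,c_j)$. Since all generators after $x_j$ are non-negative,
$c\ge(x_1\cdots x_j)\lambda\ge c_j+r_jp-(x_1\cdots x_{j-1})\mu\ge c_j-qp$, so $a^\ast\le c+qp$. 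This is your ``return from the far right'' made precise, and it needs no appeal to Lemma~\ref{lem:EuniPIGGreen}.
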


Further lemmas are needed before we begin the proof.

\begin{lem}
\label{lem:PeriodicSemilattice}
With the assumption above, if $\PIG(S)$ is finite, then $E(S)$ is $p$-periodic from some~$n_{E}$.
\end{lem}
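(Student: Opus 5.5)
Since $E(S)\subseteq\E\subseteq\PE$ consists solely of idempotents, I will use the simpler description from Remark~\ref{rmk:PeriodicSets}(2). Horizontal $p$-periodicity from $n_E$ means: for $a\geq n_E$, $(-a,0,b)\in E(S)\Leftrightarrow(-(a+p),0,b)\in E(S)$. Vertical periodicity is the analogous statement in $b$. The plan is to show that, away from finitely many points, $E(S)$ coincides with $E(S)\setminus\PIG(S)$, and that this set is closed under shifting by $p$ in either coordinate.

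First, since $\PIG(S)$ is finite, I choose $n_E$ larger than every $\lambda$- and $\rho$-value occurring in $\PIG(S)$, and also at least $a^\ast,b^\ast$. Then every idempotent $(-a,0,b)\in E(S)$ with $a\geq n_E$ or $b\geq n_E$ lies in $E(S)\setminus\PIG(S)=E(\inv(S)^\circ)$.

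Next I prove the shifting step. Let $e=(-a,0,b)\in E(S)\setminus\PIG(S)$. Using the elements $\alpha=(-a^\ast,p,p+b_\alpha)$ and $\beta=(-(a_\beta+p),-p,b^\ast)$ of $\inv(S)$ from Proposition~\ref{prop:OliveiraSilva}, I compute products such as $\beta e\beta^{-1}$ or $\beta^{-1}\beta\,\beta e\,\beta^{-1}$. By the multiplication formula, $\beta e\beta^{-1}$ is an idempotent whose Munn tree is the tree of $e$ translated by $-p$ and joined with the tree of $\beta\beta^{-1}=(-(a_\beta+p),0,b^\ast)$. When $a\geq a_\beta$ and $b\geq b^\ast+p$, this gives $(-(a+p),0,b-p)$. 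Combining this with the analogous products involving $\alpha$, I expect to obtain $(-(a+p),0,b)\in E(S)$; for instance, $e\cdot\beta e\beta^{-1}=(-(a+p),0,b)$. The reverse direction, $(-(a+p),0,b)\in E(S)\Rightarrow(-a,0,b)\in E(S)$, is the harder one, since multiplying idempotents only enlarges trees.

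For the reverse direction I will use Lemma~\ref{lem:EuniPIGGreen}. Since $f=(-(a+p),0,b)$ is not purely idempotent-generated, there is a non-idempotent $s\in\inv(S)$ with $f\mathscr{R}s$, i.e.\ $s=(-(a+p),qp,b)$ for some $q\neq0$. Then $s^{-1}$ and $s\alpha$, or products like $\alpha^{-1}$-conjugates, move the base point. Concretely, $s^{-1}s=(-(a+p+qp),0,b-qp)$, and conjugating by a suitable power of $\alpha$ or $\beta$ should realise the tree $[-a,b]$ with the base point shifted, provided $a\geq a^\ast$ so that the $\alpha$-tree fits inside. Taking $u^{-1}u$ of a suitable $u=s\alpha^{k}$-type element, with endpoints adjusted, should yield $(-a,0,b)$.

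The main obstacle is this last step: choosing the correct products so that the resulting Munn tree is exactly $[-a,b]$ with no overhang. This is where the hypotheses $a\geq a^\ast$ and $b\geq b^\ast$ (guaranteed by $n_E$) are needed: the trees of $\alpha$ and $\beta$ extend left by at most $a^\ast$ and right by at most $b^\ast$ beyond the shift, so they fit inside. The vertical case is dual, obtained by swapping the roles of $\alpha$ and $\beta$ (or applying inversion symmetry). Finally, Remark~\ref{rmk:PeriodicSets}(4) combines the horizontal and vertical statements into $p$-periodicity from $n_E$.
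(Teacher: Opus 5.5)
Your forward step is sound and close to the paper's. For $e=(-a,0,b)\in E(S)\setminus\PIG(S)$ you automatically have $b\ge b^\ast$, and $e\cdot\beta e\beta^{-1}=(-(a+p),0,b)$ as soon as $a\ge a_\beta$, so your extra condition $b\ge b^\ast+p$ is unnecessary. Dually, the $\alpha$-version gives $(-a,0,b+p)$ once $b\ge b_\alpha$; the paper uses $(e\beta e)(e\beta e)^{-1}$ and $(e\beta e)^{-1}(e\beta e)$ instead. You only need to add $a_\beta,b_\alpha$ to the bound on $n_E$.

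\textbf{The gap is the reverse implication} $(-(a+p),0,b)\in E(S)\Rightarrow(-a,0,b)\in E(S)$, and your route cannot work. Every $I_k$ is an ideal of $\FI_1$, so $\lambda+\rho$ never drops under \emph{any} multiplication, not only products of idempotents. Hence any product having $f=(-(a+p),0,b)$, your $s=(-(a+p),qp,b)$, or $s^{-1}s$ as a factor has $\lambda+\rho\ge a+p+b>a+b$. No choice of $\alpha$- or $\beta$-conjugates can produce $(-a,0,b)$.

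Moreover, a threshold built only from $\PIG(S)$, $a^\ast$, $b^\ast$, $a_\beta$, $b_\alpha$ is false. Take $S=\langle(0,2,2),(-2,-2,0),(-N,0,0)\rangle$ with $N\ge 3$ odd. Here $p=2$, and one may take $\alpha=(0,2,2)$, $\beta=(-2,-2,0)$, so $a^\ast=b^\ast=a_\beta=b_\alpha=0$. Also $\PIG(S)=\emptyset$: the only idempotent generator is $(-N,0,0)=ss^{-1}$, where $s=(-N,0,0)(-2,-2,0)=(-N,-2,0)$ and $s^{-1}=(0,2,2)(-N,0,0)$ both lie in $S$. Yet $(-N,0,0)\in E(S)$ while $(-(N-2),0,0)\notin E(S)$. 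Indeed, elements avoiding the third generator have even $\lambda$, and elements using it have $\lambda+\rho\ge N$. So $n_E$ must depend on the global shape of $E(S)$, and the hypothesis on $(-(a+p),0,b)$ can only be used non-constructively.

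\textbf{The missing idea is a finiteness argument} layered on your forward step, which is how the paper proceeds.
\begin{enumerate}
\item Restrict to idempotents of $E(S)$ with large $\lambda+\rho$ (these are not purely idempotent-generated), so the forward shifts apply to them.
\item Partition them by the residues of $\lambda-a^\ast$ and $\rho-b^\ast$ modulo $p$. Each class is closed under both forward shifts.
\item By Dickson's lemma, each class has only finitely many maximal elements for the order \eqref{eq:pord}. Choose $n_E$ larger than $\lambda+\rho$ of all of them.
\item Now let $f=(-(a+p),0,b)\in E(S)$ with $a\ge n_E$, and pick a maximal $f_m\ge f$ in its class. Then $f_m\rho\le b$, $f_m\lambda\le n_E\le a$, and the residues match. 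So repeated forward shifts from $f_m$ reach $(-a,0,b)$.
\end{enumerate}
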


\begin{proof}
Define 
\[
m := \max\bigl(\bigl\{a^\ast+p+b_{\alpha}, a_{\beta}+p+b^\ast\bigr\}\cup \bigl\{a+b : (-a, 0, b)\in\PIG(S)\bigr\}\bigr).
\]
Recall that the set $I_{2m} := \bigcup_{i\geq 2m}D_i$ is an ideal of $\FI_1$.
 We claim that if $e = (-a_e, 0, b_e)\in I_{2m}\cap E(S)$ then both $(-(a_e+p), 0, b_e)$ and $(-a_e, 0, b_e+p)$ are in $E(S)$. To see this, note that either $a_e\geq m$ or $b_e\geq m$. 
In the former case routine calculations show 
$(e\beta e)(e\beta e)^{-1} = (-(a_e+p),0,b_e)\in E(S)$ and $(e\beta e)^{-1}(e\beta e) = (-a_e, 0, b_e+p)\in E(S)$. 
The latter case is analogous, using $e\alpha e\in \inv(S)$, proving the claim.

Note that each idempotent in $I_{2m}\cap E(S)$ can be written as $(-(a^\ast +xp +r),0, b^\ast+yp+s)$ where $x,y\in\mathbb{N}_0$ and $r,s\in[0,p-1]$, due to our definitions of $a^\ast$ and $b^\ast$. For each $(r,s)\in[0,p-1]^2$, we define 
\[
E_{r,s} := \bigl\{(-a, 0, b)\in I_{2m}\cap E(S) : a = a^\ast +xp+r \text{ and } b = b^\ast+yp+s \text{ for some } x,y\in\mathbb{N}_0\bigr\}.
\]
So, $I_{2m}\cap E(S) = \bigcup E_{r,s}$, where the union is taken over all $(r,s)\in[0,p-1]\times [0,r-1]$. 
Let $M_{r,s}$ be the set of maximal idempotents in $E_{r,s}$, under the partial order introduced in \eqref{eq:pord}.
Recall that $\E$ is order-isomorphic to $\mathbb{N}_0\times\mathbb{N}_0\setminus\{(0,0)\}$
under $\geq$. Hence, each $M_{r,s}$ is finite by Dickson's lemma; see for example
\cite[Corollary 6.12]{Diekert16}.

Define 
\[
n_E := \max \bigl\{e\lambda + e\rho : e\in M_{r,s},\ (r,s)\in [0,p-1]\times [0,p-1]\bigr\}.
\]
We prove that $E(S)$ is $p$-periodic from $n_{E}$. From the claim above, we already know that if $(-a, 0, b)\in E(S)$ with $a\geq n_{E}$ or $b\geq n_E$, then $(-(a+p), 0, b), (-a, 0, b+p)\in E(S)$.

We now show that if $f = (-(a+p), 0, b)\in E(S)$ with $a\geq n_E$ then $(-a, 0, b)\in E(S)$. Note that $f \in E_{r,s}$ for some $(r,s)\in [0, p-1]\times [0,p-1]$. So, write 
\[
f = (-(a+p), 0, b) = \bigl(-(a^\ast + xp +r), 0, b^\ast +yp+s\bigr)\in E_{r,s}
\]
for some $x, y\in \mathbb{N}_0$. Since $a+p+b > a \geq n_E$, there exists a maximal idempotent 
\[
f_m = \bigl(-(a^\ast +zp+r), 0, b^\ast + tp +s \bigr)\in M_{r,s}
\]
where $z, t\in \mathbb{N}_0$ and $f_m > f$; in particular, we have $z < x$ and $t\leq y$. Then, the claim implies 
\[
\bigl(-(a^\ast + (z+i)p + r), 0, b^\ast + (t+j)p + s\bigr)\in E(S)
\]
for every $i, j\in\mathbb{N}_0$. Thus, $(-a, 0, b)\in E(S)$. This shows that $E(S)$ is horizontally periodic from $n_E$ with period $p$. By symmetry, $E(S)$ is vertically periodic from $n_E$ with period $p$, and the lemma is proved.
\end{proof}

The next lemma asserts that $\P_{a,b}\cap S \subseteq \inv(S)$
for large enough $a,b\in\mathbb{N}_0$. Of course, there is a dual version in terms of $\N_{a,b}\cap S$.

\begin{lem}
\label{lem:LargeabInv}
Suppose $u\in S$ is such that $u\in \P_{a,b}$ with $a\geq a_\beta$ and $b\geq b^\ast$. Then, $u\in\inv(S)$.
\end{lem}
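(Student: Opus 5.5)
The plan is to exhibit $u^{-1}$ explicitly as a product of elements of $S$, using only $u$ itself and the element $\beta=\beta_S=(-(a_\beta+p),-p,b^\ast)\in\inv(S)$ supplied by Proposition~\ref{prop:OliveiraSilva}. Since $u\in S$ already, showing $u^{-1}\in S$ gives $u\in S\cap S^{-1}=\inv(S)$. Write $u=(-a,x,x+b)$ with $x>0$. Because $S\mu=p\mathbb{Z}$ (Lemma~\ref{lem:SInvPhi}), we have $x=kp$ for some $k\in\mathbb{N}$. Our target is
\[
u^{-1}=(-(a+x),-x,b).
\]

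The first step is to compute powers of $\beta$ using the multiplication formula. Inductively I expect
\[
\beta^k=(-(a_\beta+kp),-kp,b^\ast)=(-(a_\beta+x),-x,b^\ast),
\]
since at each step the $\lambda$-coordinate grows by $p$, while the $\rho$-coordinate stays at $\max(b^\ast,b^\ast-jp)=b^\ast$.

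The second step is to form $\beta^k u$. By the formula its $\lambda$-value is $\max(a_\beta+x,\,a+x)$, its $\mu$-value is $0$, and its $\rho$-value is $\max(b^\ast,\,x+b-x)=\max(b^\ast,b)$. The hypotheses $a\ge a_\beta$ and $b\ge b^\ast$ are exactly what collapse these maxima, giving the idempotent
\[
\beta^k u=(-(a+x),0,b)\in S.
\]

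The third step is to multiply on the right by $\beta^k$ once more. This gives $\lambda$-value $\max(a+x,\,a_\beta+x)=a+x$, $\mu$-value $-x$, and $\rho$-value $\max(b,\,b^\ast)=b$. Hence
\[
\beta^k u\beta^k=(-(a+x),-x,b)=u^{-1}\in S,
\]
which completes the argument.

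There is no real obstacle here: the only care needed is the bookkeeping of the maxima in the multiplication formula, together with checking that $\beta^k$ really has the stated form. The hypotheses $a\ge a_\beta$ and $b\ge b^\ast$ are used precisely to ensure that the maxima are attained by the coordinates coming from $u$.
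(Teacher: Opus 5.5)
Your proof is correct and follows the paper's argument exactly: the paper also writes $u=(-a,qp,qp+b)$ and checks that $u^{-1}=(-(a+qp),-qp,b)=\beta^q u\beta^q\in S$. Your explicit bookkeeping of $\beta^k$, $\beta^k u$ and $\beta^k u\beta^k$ is accurate and simply spells out the computation the paper calls routine.
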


\begin{proof}
Write $u = (-a, qp, qp+b)\in \P(S)$ with $a\geq a_{\beta}$ and $b\geq b^\ast$. It is then routine to check that $u^{-1} = (-(a+qp), -qp, b) = \beta^qu\beta^q\in S$. 
\end{proof}

We now establish the main theorem:

\begin{proof}[Proof of Theorem~\ref{thm:2side}]
\noindent ($\Rightarrow$) Suppose that the two-sided subsemigroup $S\leq \FI_1$ is generated by a finite set $X$. By Lemma~\ref{lem:subFI1PIG}, $\PIG(S)$ is finite and $\inv(S)$ is finitely generated; this shows \ref{it:2side1}.

For \ref{it:2side2}, we show $S$ is $p$-periodic from some $n$. First, note that since $\PIG(S)$ is finite, there is some $n'$ such that $\PIG(S)\subseteq \bigcup_{0\leq a,b\leq n'}\PE_{a,b}$. Also, note that $\langle \PE(X)\rangle$ and $\langle\PE(X^{-1})\rangle$ are finitely generated one-sided subsemigroups. Using Theorem \ref{thm:1side}, choose $n''$ to be such that
\[
\langle\PE(X)\rangle\cup \langle\PE(X^{-1})\rangle\subseteq \bigcup_{a,b\in [0,n'']}\PE_{a,b}.
\]

Let $n_1\in\mathbb{N}$ be larger than any of the following:
\begin{bulletmidsentence}
    \item $n'$ and $n''$;
    \item $n_E$ where the $p$-periodicity of $E(S)$ starts (Lemma~\ref{lem:PeriodicSemilattice});
    \item $a^\ast + p +b_{\alpha}$ and $a_{\beta} + p + b^\ast$;
    \item $x\lambda + x\rho$ for all $x\in X$.
\end{bulletmidsentence}

To show $p$-periodicity of $\PE(S)$ we proceed via a series of claims.

\begin{claim}
\label{cl:2side1}
For $a,b\geq n_1$, we have 
\[
\textup{(1)}\ \bigl(\PE_{a,b}\cap \PE(S)\bigr)\mu = \bigl(\PE_{a+p, b}\cap \PE(S)\bigr)\mu; \text{ and}\quad
\textup{(2)}\ \bigl(\PE_{a,b}\cap \PE(S)\bigr) \mu = \bigl(\PE_{a,b+p}\cap \PE(S)\bigr)\mu .
\]
\end{claim}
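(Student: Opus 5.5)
The plan is to reduce both parts of the claim to the $p$-periodicity of $E(S)$ given by Lemma~\ref{lem:PeriodicSemilattice}. The device is a correspondence between elements of $\PE(S)$ in the relevant region and idempotents of $S$, obtained by multiplying with powers of $\alpha=\alpha_S$.

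The key step is the following equivalence. Let $c\geq \max(a^\ast,a_\beta)$, $d\geq \max(b^\ast,b_\alpha)$ and $x=qp$ with $q\in\mathbb{N}_0$. Then
\[
(-c,x,x+d)\in S \iff (-c,0,x+d)\in E(S).
\]
Throughout, elements of $\PE(S)$ have $x$ a multiple of $p$, because $S\mu=p\mathbb{Z}$, so $x=qp$ is no restriction. For $x=0$ there is nothing to prove. For $x>0$ I argue as follows.

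\begin{itemize}
\item[($\Rightarrow$)] Let $u=(-c,x,x+d)\in S$. By Lemma~\ref{lem:LargeabInv} (as $c\geq a_\beta$ and $d\geq b^\ast$), $u\in\inv(S)$. Hence $uu^{-1}=(-c,0,x+d)\in E(S)$.
\item[($\Leftarrow$)] Suppose $e=(-c,0,x+d)\in E(S)$. Since $\alpha^q=(-a^\ast,qp,qp+b_\alpha)$, the multiplication rule gives
\[
e\alpha^q=\bigl(-\max(c,a^\ast),\,qp,\,\max(x+d,\,qp+b_\alpha)\bigr)=(-c,x,x+d),
\]
using $c\geq a^\ast$ and $d\geq b_\alpha$. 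This element lies in $S$.
\end{itemize}

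The bounds on $c,d$ are all guaranteed whenever $c,d\geq n_1$. Indeed, $n_1$ exceeds $a^\ast+p+b_\alpha$ and $a_\beta+p+b^\ast$, and hence each of $a^\ast,a_\beta,b^\ast,b_\alpha$. The same holds for $a+p$ and $b+p$.

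With the equivalence in hand, the claim is immediate. For (1), take $a,b\geq n_1$ and $x\in p\mathbb{N}_0$. Then
\[
x\in(\PE_{a,b}\cap\PE(S))\mu \iff (-a,0,x+b)\in E(S) \iff (-(a+p),0,x+b)\in E(S) \iff x\in(\PE_{a+p,b}\cap\PE(S))\mu .
\]
The middle step is horizontal $p$-periodicity of $E(S)$ from $n_E$, which applies because $a\geq n_1>n_E$. For (2), the same chain works with $(-a,0,x+b+p)$ in place of $(-(a+p),0,x+b)$. Here one uses vertical periodicity of $E(S)$, valid since $x+b\geq b\geq n_1>n_E$. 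The periodicity of $E(S)$ is being read in the simple form of Remark~\ref{rmk:PeriodicSets}(2).

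The main point needing care is the ($\Rightarrow$) direction of the equivalence. A priori an element of $\P(S)$ need not have its inverse in $S$, so one cannot simply pass to $uu^{-1}$. This is exactly why Lemma~\ref{lem:LargeabInv} is invoked, and why the thresholds in $n_1$ are chosen above $a_\beta$ and $b^\ast$. The reverse direction is a direct computation with $\alpha^q$, which works uniformly for all large $a,b$.
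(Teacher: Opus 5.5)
Your proof is correct and follows essentially the same route as the paper. Both use Lemma~\ref{lem:LargeabInv} to pass from $u$ to the idempotent $uu^{-1}$, then the $p$-periodicity of $E(S)$, then multiplication by $\alpha^q$ to recover a positive element; the only difference is packaging, since you isolate the equivalence $(-c,x,x+d)\in S \iff (-c,0,x+d)\in E(S)$ and use $e\alpha^q$, whereas the paper proves the two inclusions separately via $u(-a,0,b+p)$ and $e\alpha^q e$.
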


\begin{proof}[Proof of Claim~\ref{cl:2side1}]
We show (2), and (1) is analogous. Since $E(S)$ is $p$-periodic from $n_E$ and ${n_1\geq n_E}$, we only need to consider the elements in $\P(S)$. For ($\subseteq$), assume $u=(-a, qp, qp+b)\in \P(S)$. By Lemma~\ref{lem:LargeabInv}, $u\in \inv(S)$. Thus, $uu^{-1} = (-a, 0, qp+b)\in E(S)$. Since $E(S)$ is $p$-periodic and $a,b\geq n_E$, we have $(-(a+ip), 0, b+jp)\in E(S)$ for all $i,j\in\mathbb{N}_0$; in particular, $(-a, 0, b+p)\in E(S)$. Then, $(-a, qp, qp+b+p) = u(-a, 0, b+p)\in \P(S)$.

For ($\supseteq$), assume $v = (-a, qp, qp+b+p)\in \P(S)$. By the same argument as above, there exists $e = (-a, 0, b)\in E(S)$. It is then routine to check that $(-a, qp, qp+b) = e\alpha^qe \in \P(S)$, as required.
\end{proof}

\begin{claim}
\label{cl:2side2}
Let $u = (-a, qp, qp+b)\in \P(S)$. Then, the following hold:
\begin{enumerate}[topsep=0mm, leftmargin=14mm,itemsep=1mm,label=\textup{(\arabic*)}]
    \item \label{it:cl2side2-1}if $a\geq n_1$, there is $e\in E(S)\setminus\PIG(S)$ such that $e\lambda = a+p$ and $e\rho \leq qp+b$;
    \item \label{it:cl2side2-2}
    if $b\geq n_1$, there is $f\in E(S)\setminus\PIG(S)$ such that $f\lambda \leq a+qp$ and $f\rho = b+p$.
\end{enumerate}
\end{claim}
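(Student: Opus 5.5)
The plan is to prove \ref{it:cl2side2-1} by writing down an explicit idempotent built from $u$ and the element $\beta=(-(a_\beta+p),-p,b^\ast)\in\inv(S)$ of Proposition~\ref{prop:OliveiraSilva}. Part \ref{it:cl2side2-2} should then follow dually, using $\alpha=(-a^\ast,p,p+b_\alpha)$ in place of $\beta$ and the hypothesis $b\geq n_1$. The candidate is
\[
e:=\beta\,u\,\beta^{q-1}\in S.
\]
Its $\mu$-value is $-p+qp-(q-1)p=0$, so $e$ is an idempotent.

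Next I compute $e\lambda$ and $e\rho$ with the multiplication formula, reading the Munn path from left to right. The factor $\beta$ reaches $-(a_\beta+p)$ on the left and $b^\ast$ on the right, and ends at $-p$. The factor $u$ then reaches $-p-a$ on the left and $-p+qp+b$ on the right, and ends at $(q-1)p$. Finally $\beta^{q-1}=(-(a_\beta+(q-1)p),-(q-1)p,b^\ast)$ reaches $-a_\beta$ on the left and $(q-1)p+b^\ast$ on the right. Since $a\geq n_1>a_\beta+p$, this gives $e\lambda=a+p$, which is the required equality. It also gives
\[
e\rho=\max\bigl(b^\ast,\ (q-1)p+b,\ (q-1)p+b^\ast\bigr).
\]
For $e\notin\PIG(S)$, it suffices that $e$ is a product in $S$ containing the non-idempotent factor $\beta$.

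The main obstacle is the bound $e\rho\leq qp+b$, which amounts to showing $b^\ast\leq b+p$. If $b\geq b^\ast$ this is immediate. Otherwise I would use that $u$ lies outside $\langle\PE(X)\rangle$, because $a\geq n_1>n''$. So any factorisation $u=x_1\cdots x_k$ over $X$ contains a negative generator, and the left end $-a$ of the Munn path of $u$ must be reached inside some factor. I would then cut $u$ at that point, $u=u_1u_2$, and form an idempotent of the shape $u_1u_1'$ or $u_2'u_2$, with the primes denoting suitable products of copies of $\alpha$, $\beta$ or generators. The aim is an idempotent whose right end is controlled by $u$'s own right end $qp+b$ and whose left end is exactly $a+p$. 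For its $\lambda$-value to be $a+p$ rather than $a$, I expect to multiply on the left by $\beta$ once more. Non-membership in $\PIG(S)$ would again come from containing a non-idempotent factor.

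If this factorisation argument gets messy, I would fall back on the minimality of $b^\ast$ among $\rho$-values of elements of $E(S)\setminus\PIG(S)$. Applying it to the intermediate idempotent $uu'$ (with $u'$ built from $\beta$'s) should force the inequality $b^\ast\leq$ its $\rho$-value. I then expect to compare this with the right end $qp+b$ to close the argument.
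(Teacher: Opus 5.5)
There is a genuine gap. Your computation for $e=\beta u\beta^{q-1}$ is correct: $e\lambda=a+p$ and $e\rho=(q-1)p+\max(b,b^\ast)$. So everything hinges on $b^\ast\le b+p$, and that inequality is false in general.

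Take $S=\langle (0,1,1),(-1,-1,10)\rangle$, so $p=1$. Any element of $S$ involving $(-1,-1,10)$ can be written $(0,j,j)(-1,-1,10)w$ with $j\ge 0$, so its $\rho$-value is at least $10$. Hence every idempotent and every negative element of $S$ has $\rho\ge 10$, and $b^\ast=10$ (attained by $(-1,-1,10)(0,1,1)=(-1,0,10)$). For $k\ge n_1$ put
\[
u=(-1,-1,10)^k(0,1,1)^{k+2}=(-k,2,10),
\]
so $a=k$, $q=2$, $b=8$, and $b+p=9<b^\ast$. Any admissible $\beta$ is negative with $\mu=-1$, hence $\beta\rho\ge 10$. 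Therefore $\beta u\beta$ has $\rho$-value at least $1+10=11>10=qp+b$, whichever $\beta$ you choose. The claim itself does hold here, witnessed by $(-1,-1,10)^{k+1}(0,1,1)^{k+1}=(-(k+1),0,10)$.

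Your fallbacks do not repair this. Every idempotent assembled from $u$ and copies of $\beta$ has $\rho$-value at least $\beta\rho=b^\ast$. The minimality of $b^\ast$ then yields only trivial inequalities; for example, $\beta^qu=(-(a+qp),0,\max(b^\ast,b))$ gives $b^\ast\le\max(b^\ast,b)$. This is circular: $b^\ast$ is a global invariant of $S$ with no a priori relation to $u$. The idea of cutting $u$ at the point where $-a$ is reached is too unspecified to assess.

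Two ingredients are missing.
\begin{enumerate}
\item \emph{Use a factorisation of $u$ over $X$.} Since $a\ge n_1>n''$, any factorisation $u=x_1\cdots x_m$ over $X$ contains a negative generator. Take the \emph{first} one, $x_i=(-(c_i+r_ip),-r_ip,d_i)$. The prefix $x_1\cdots x_{i-1}$ has non-negative $\mu$-value, so $d_i\le (x_1\cdots x_i)\rho\le qp+b$. Since $a\ge n_1>c_i$, the idempotent $x_i^q u^{r_i}$ equals $(-(a+r_iqp),0,\max(d_i,b))$, and its $\rho$-value is controlled by $u$.
\item \emph{Use periodicity instead of exact products.} Do not insist on reaching $\lambda$-value exactly $a+p$ by a product; that is what forces $\beta^{q-1}$ to the right of $u$ and ruins the $\rho$-bound. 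Instead use Lemma~\ref{lem:PeriodicSemilattice}: $E(S)$ is $p$-periodic from $n_E\le n_1\le a$. This moves the $\lambda$-value from $a+r_iqp$ to $a+p$ while keeping $\rho=\max(d_i,b)$. The resulting idempotent is not in $\PIG(S)$ because $a+p>n'$.
\end{enumerate}
Once this gives $b^\ast\le qp+b$, your element $\beta^qu$, combined with periodicity, would also work.

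Your part (2), obtained ``dually'', has the same defect. The mirror-image construction needs $a^\ast\le a+p$, which fails in the mirror-image example. The right tools there are the \emph{last} negative generator $x_j$, the idempotent $u^{r_j}x_j^q=(-\max(a,c_j),0,r_jqp+b)$, and vertical periodicity of $E(S)$.
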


\begin{proof}[Proof of Claim~\ref{cl:2side2}]
\ref{it:cl2side2-1} Suppose $a\geq n_1$ and write $u = x_1\dots x_m$ where each $x_i\in X$. Note that at least one of $x_i$ is a negative element since $a\geq n_1\geq n''$. Let $x_i = (-(c_i+r_ip), -r_ip, d_i)$ be the first negative generator that occurs in the product. Consider the idempotent 
\begin{align*}
    e' = x_i^{q}u^{r_i} &= \bigl(-(c_i+r_iqp),\, -r_iqp,\, d_i\bigr) \big(-a,\, r_iqp,\, r_iqp+b\bigr)\\
    &= \big(-\max(c_i+r_iqp, a+r_iqp),\, 0,\, \max(d_i, b)\bigr)\\
    &= \big(-(a+r_iqp),\, 0,\, \max(d_i,b)\bigr)\in E(S) &&\text{(since $a\geq n_1$)}.
\end{align*}
Note that 
\begin{align*}
    qp+b = u\rho &\geq (x_1\dots x_{i})\rho\\
    &= \max\bigl((x_1\dots x_{i-1})\rho, \, (x_1\dots x_{i-1}\bigr)\mu + x_{i}\rho)\\
    &\geq (x_1\dots x_{i-1})\mu+x_i\rho\\
    &\geq x_i\rho = d_i.
\end{align*}
Hence, $e'\rho\leq qp+b$. Since $E(S)$ is $p$-periodic from $n_E$ and $a\geq n_1\geq n_E$, there exists $e = (-(a+p),\, 0,\, e'\rho)\in E(S)$; it is clear that $e\notin \PIG(S)$.

\ref{it:cl2side2-2} Suppose $b\geq n_1$ and write $u = x_1\dots x_m$ where each $x_i\in X$. This time, let $x_j = (-(c_j+r_jp), \, -r_jp,\, d_j)$ be the last negative generator that occurs in the product. Consider the idempotent 
\[
f' = u^{r_j}x_j^{q} = (-\max(a,c_j),\, 0,\, r_jqp+b)\in E(S).
\]
Note that $u\mu = (x_1\dots x_{j-1})\mu - r_jp + (x_{j+1}\dots x_m)\mu$. Also, 
\begin{align*}
    a = u\lambda &\geq (x_1\dots x_{j})\lambda \\
    &=\max\bigl((x_1\dots x_{j-1})\lambda,\, x_j\lambda - (x_1\dots x_{j-1})\mu\bigr)\\
    &\geq c_j+r_jp-(x_1\dots x_{j-1})\mu \\
    &= c_{j} - qp + (x_{j+1}\dots x_m)\mu\\
    &\geq c_j - qp.
\end{align*}
Thus, $f'\lambda \leq a+qp$. By the $p$-periodicity of $E(S)$ and $b\geq n_E$, we have $f = (-f'\lambda, \, 0, \, b+p)\in E(S)\setminus\PIG(S)$, completing the proof of the claim.
\end{proof}

\begin{claim}
\label{cl:2side3}
The following hold:
\begin{enumerate}[topsep=0mm, leftmargin=14mm,itemsep=1mm,label=\textup{(\arabic*)}]
\item \label{it:cl2side3-1} if $a\geq n_1$, $(\PE_{a,b}\cap \PE(S))\mu \subseteq (\PE_{a+p, b}\cap \PE(S))\mu$;
\item \label{it:cl2side3-2} if $b\geq n_1$, $(\PE_{a,b}\cap \PE(S))\mu \subseteq (\PE_{a, b+p}\cap \PE(S))\mu$.
\end{enumerate}
\end{claim}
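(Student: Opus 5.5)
The plan is to prove each inclusion elementwise, splitting on whether the element is an idempotent or lies in $\P(S)$. For the positive elements I will multiply by the idempotents supplied by Claim~\ref{cl:2side2}. Throughout, $u\in\PE_{a,b}\cap\PE(S)$ has $u\mu\ge 0$, and I must exhibit an element of $\PE_{a+p,b}\cap\PE(S)$ (resp.\ $\PE_{a,b+p}\cap\PE(S)$) with the same $\mu$-value.

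\emph{Idempotent case.} Suppose $u=(-a,0,b)\in E(S)$. Since $n_1\ge n_E$ and $E(S)$ is $p$-periodic from $n_E$ by Lemma~\ref{lem:PeriodicSemilattice}, the hypothesis $a\ge n_1$ gives $(-(a+p),0,b)\in E(S)$. Likewise $b\ge n_1$ gives $(-a,0,b+p)\in E(S)$. In both cases the $\mu$-value is $0$, as required.

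\emph{Positive case, part \ref{it:cl2side3-1}.} Let $u=(-a,qp,qp+b)\in\P(S)$ with $a\ge n_1$. Claim~\ref{cl:2side2}\ref{it:cl2side2-1} gives $e\in E(S)$ with $e\lambda=a+p$ and $e\rho\le qp+b$. The multiplication rule then yields
\[
eu=\bigl(-\max(a+p,\,a),\ qp,\ \max(e\rho,\,qp+b)\bigr)=(-(a+p),\,qp,\,qp+b).
\]
Thus $eu\in\P_{a+p,b}\cap S$ and $(eu)\mu=u\mu$.

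\emph{Positive case, part \ref{it:cl2side3-2}.} Let $b\ge n_1$. Claim~\ref{cl:2side2}\ref{it:cl2side2-2} gives $f\in E(S)$ with $f\lambda\le a+qp$ and $f\rho=b+p$. Multiplying on the right gives
\[
uf=\bigl(-\max(a,\,f\lambda-qp),\ qp,\ \max(qp+b,\,qp+b+p)\bigr)=(-a,\,qp,\,qp+b+p).
\]
Hence $uf\in\P_{a,b+p}\cap S$ with the same $\mu$-value.

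There is no real obstacle here: all the substance sits in Claim~\ref{cl:2side2}. The only points needing care are the correct order of multiplication (left by $e$, right by $f$) and checking that the bounds $e\rho\le qp+b$ and $f\lambda\le a+qp$ make the maxima come out as stated. The fact that $e,f\notin\PIG(S)$ is not needed for this claim. I expect it to be used later, for the reverse inclusions and for condition~\ref{it:2side3}.
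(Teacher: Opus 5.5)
Your proposal is correct and follows essentially the same route as the paper. Idempotents are handled by the $p$-periodicity of $E(S)$ from $n_E\le n_1$. Positive elements are handled by left multiplication with the idempotent $e$ of Claim~\ref{cl:2side2}\ref{it:cl2side2-1}, and by right multiplication with $f$ from Claim~\ref{cl:2side2}\ref{it:cl2side2-2}. Your explicit computation of $uf$ fills in the case the paper only calls ``similar''.
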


\begin{proof}[Proof of Claim~\ref{cl:2side3}]
We only need to consider positive elements since $n_1\geq n_E$. For \ref{it:cl2side3-1}, given $u = (-a, qp, qp+b)\in \P(S)$ with $a\geq n_1$, there is an idempotent $e = (-(a+p), 0, e\rho)$ with $e\rho\leq qp+b$ by \ref{it:cl2side2-1} of Claim~\ref{cl:2side2}. Then, $eu = (-(a+p), qp, qp+b)\in \P(S)$; this shows \ref{it:cl2side3-1}. The proof of \ref{it:cl2side3-2} is similar.
\end{proof}

We now show vertical periodicity of $\PE(S)$. Claim~\ref{cl:2side1} shows the vertical periodicity with period $p$ for $a,b\geq n_1$. Suppose $0\leq a <n_1$. Consider $b\geq n_1$ such that $\PE_{a,b}\cap \PE(S)\neq\emptyset$; choose the smallest such $b$. By \ref{it:cl2side3-2} of Claim~\ref{cl:2side3}, we have 
\begin{align}
\bigl(\PE_{a,b}\cap \PE(S)\bigr)\mu \subseteq \bigl(\PE_{a,b+p}\cap \PE(S)\bigr)\mu \subseteq \bigl(\PE_{a,b+2p}\cap \PE(S)\bigr)\mu\subseteq\cdots .\label{eq:seq}    
\end{align}
Note that each subsemigroup in the sequence (\ref{eq:seq}) is a subsemigroup in $\mathbb{N}_0$; in particular, the sequence stabilises. Thus, for each $0\leq a<n_1$, 
there is $n_a\geq n_1$ such that 
$(\PE_{a, b}\cap \PE(S))\mu = (\PE_{a, b+p} \cap \PE(S))\mu$ for all $b\geq n_a$.
Define $n := \max\{n_a: 0\leq a <n_1\}$. Then, $\PE(S)$ is vertically periodic from $n$ with period $p$. An analogous argument shows the horizontal periodicity of $\PE(S)$ with period $p$. The periodicity of $\PE(S^{-1})$ is dual. This concludes the proof that $S$ satisfies \ref{it:2side2} from the statement of Theorem \ref{thm:2side}.

We now show that \ref{it:2side3} holds. 
We will retain the numbers $n_1$ from the proof of \ref{it:2side2} above, and $n$, a point where periodicity of $S$ begins. Without loss of generality we may assume that $n\geq n_1$.
We prove that $a^\ast \leq v_{\PE(S)}$. To do so, let 
\begin{align*}
    u = (-a, qp, qp+b) \in \bigcup_{a\geq 0, b\geq n}\bigr(\PE(S)\cap \P_{a,b}\bigr),
\end{align*}
the union being non-empty by Lemma~\ref{lem:NonZeroNums}.
By Claim~\ref{cl:2side2} \ref{it:cl2side2-2}, there is an idempotent $f\in E(S)\setminus\PIG(S)$ such that $f\lambda\leq a+qp$. By definition, $a^\ast$ is the least $\lambda$-value of elements in $\inv(S)^\circ$; note that $E(\inv(S)^\circ) = E(S)\setminus\PIG(S)$. In particular, $a^\ast \leq f\lambda \leq a+qp$. Since $u$ was chosen arbitrarily, we have $a^\ast \leq v_{\PE(S)}$, as required. 
That $b^\ast \leq h_{\PE(S)}$ is proved similarly using Claim~\ref{cl:2side2}~\ref{it:cl2side2-1}.
The proofs of $a^\ast \leq v_{\PE(S^{-1})}$ and $b^\ast \leq h_{\PE(S^{-1})}$ are dual,
completing the proof of the forward direction of Theorem  \ref{thm:2side}.

\noindent $(\Leftarrow)$ Let $Y\subseteq\FI_1$ be a finite set such that $\inv(S) = \langle Y\rangle$ (generated as a semigroup). By Lemma~\ref{lem:EuniPlainSamePIG} and Proposition~\ref{prop:JonesTrotterPIG}, $\PIG(S)$ is finite. Thus, $E(S)$ is $p$-periodic from some $n_E$ by Lemma~\ref{lem:PeriodicSemilattice}. Without loss of generality, we assume $n$ in \ref{it:2side2} is large enough: 
\begin{align*}
    n \geq \max\{n_E, a^\ast + p +b_{\alpha}, a_{\beta}+p+b^\ast\}.
\end{align*}

We show that there is a finite set $A$ such that $\PE(S)\subseteq \langle A\rangle$. Note that $(\bigcup_{a,b\geq n}\PE_{a,b})\cap S \subseteq \inv(S)$ by Lemma~\ref{lem:LargeabInv}. Recall that each $\PE_{a,b}\cong\mathbb{N}_0$ if $a+b>0$ and $\PE_{0,0}\cong\mathbb{N}$ (Proposition~\ref{prop:ChoRuskuc}). For each $0\leq a, b< n+\nu$, let $Z_{a,b}$ be a finite generating set of $\PE_{a,b}\cap S$ (such a set exists by Proposition~\ref{prop:SitSiu}). Define $Z$ to be $\bigcup_{0\leq a,b<n+\nu}Z_{a,b}$. We claim that
\begin{align}
    \PE(S)\subseteq \langle Y, Z\rangle. \label{eq:backwardclaim}
\end{align}

To prove the claim, let $u = (-a, qp, qp+b) \in \PE(S)$. We may assume $qp>0$. Further, suppose $0\leq a< n+\nu$ and $b\geq n+\nu$. Since $\PE(S)$ is $\nu$-periodic from $n$ and $b\geq n+\nu$, there exist $u_1 = (-a, qp, qp+b_1)\in \PE(S)$ where $n\leq b_1 <n+\nu$. Note $u\alpha^{-q} = (\max(a, a^\ast), \, 0,\, qp+b)\in E(S)$. Due to the $p$-periodicity of $E(S)$ and $b\geq n+\nu > n_E$, we have $e = (-\max(a, a^\ast),\,0,\, b)\in E(S)$. By the assumption \ref{it:2side3}, $e\lambda \leq a+qp$. Then, $u = u_1e\in\langle Y, Z\rangle$. Similarly, if $a\geq n+\nu$ and $0\leq b<n+\nu$, one can show that $u\in\langle Y,Z\rangle$. This completes the claim (\ref{eq:backwardclaim}). A dual argument shows $\PE(S^{-1})\subseteq \langle Y, T\rangle$ for some finite $T\subseteq \FI_1$. Therefore, $S = \langle Y, Z, T^{-1}\rangle$, completing the proof of the backward direction of the theorem.
\end{proof}

We conclude this section with an example of a two-sided subsemigroup of $\FI_1$ that fails to satisfy \ref{it:2side3} of Theorem~\ref{thm:2side}, and hence is not finitely generated. 
It is worth noting that this example is built as the intersection of two finitely generated two-sided subsemigroups of $\FI_1$, in contrast
with Example \ref{ex:HowsonCounter1}.

\begin{ex}
\label{ex:2sideIntersect}
Let $S$ and $T$ be subsemigroups of $\FI_1$ defined as
\begin{align*}
    S := \langle (-2, 4, 6),\, (-9, -4, 10),\, (-12, -4, 2)\rangle,\;\;\; T := \langle (-2, 4, 6),\, (-8, -4, 10),\, (-12, -4, 2)\rangle.
\end{align*}
Clearly, $S\mu = T\mu = (S\cap T)\mu = 4\mathbb{Z}$. 

Since $S$ and $T$ satisfy condition \ref{it:2side1} in Theorem~\ref{thm:2side} and since $\inv(S)\cap \inv(T) = \inv(S\cap T)$, it follows that $S\cap T$ also satisfies \ref{it:2side1} of Theorem~\ref{thm:2side} using the inverse semigroup Howson property of $\FI_1$ (Theorem~\ref{thm:JonesTrotter}). Both $S$ and $T$ satisfy condition \ref{it:2side2} of Theorem~\ref{thm:2side}. Note that for each $S$ and $T$ we can choose its periodicity and the starting point of the periodic behaviour sufficiently large (Remark~\ref{rmk:PeriodicSets}). Thus, condition \ref{it:2side2} of Theorem~\ref{thm:2side} also holds for $S\cap T$. However, in the following claims we shall show that $S\cap T$ does not satisfy condition \ref{it:2side3} in Theorem~\ref{thm:2side}; in particular, we will prove that $a_{S\cap T}^\ast > v_{\PE(S\cap T)}$.

\begin{claim}
\label{cl:2sideIntersect1}
Let $e\in E(S)$ be such that $e = x_1\dots x_m$ where $x_i\in\{(-2, 4, 6),\, (-9, -4, 10),\, (-12, -4, 2)\}$ for $i=1, \dots, m$. Suppose no proper initial subword of the product is an idempotent. Then, $e\lambda$ is of the form $5+4k$ or $8+4k$ for some $k\in\mathbb{N}_0$.
\end{claim}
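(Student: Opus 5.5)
We argue directly from the multiplication formula, as in Claims~\ref{cl:HowsonCounter1}--\ref{cl:HowsonCounter2} of Example~\ref{ex:HowsonCounter1}, but tracking $\lambda$ instead of $\rho$. Write $g=(-2,4,6)$, $h=(-9,-4,10)$ and $k=(-12,-4,2)$. Iterating $(uv)\lambda=\max(u\lambda,\ v\lambda-u\mu)$ gives
\[
e\lambda=\max_{1\leq i\leq m}\bigl(x_i\lambda-(x_1\dots x_{i-1})\mu\bigr),
\]
with the empty prefix having $\mu=0$. Every prefix has $\mu\in 4\mathbb{Z}$. The hypothesis that no proper initial subword is idempotent means every proper prefix has $\mu\neq 0$. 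By continuity of the walk in steps of $\pm4$, all proper prefixes therefore lie strictly on one side of $0$. We split into two cases according to $x_1$.

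\emph{Case 1: $x_1=g$.} Every proper prefix is positive, with $\mu=4j$ for some $j\geq 1$, and $x_m\in\{h,k\}$ is preceded by a prefix with $\mu=4$. The term of the maximum coming from a letter $x_i$ with $i\geq 2$ is $2-4j$, $9-4j$ or $12-4j$ with $j\geq1$. Each of these is at most $8$, while the first letter contributes $2$. The last letter contributes exactly $9-4=5$ or $12-4=8$. Hence $e\lambda\in\{5,8\}$, which is of the required form with $k=0$.

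\emph{Case 2: $x_1\in\{h,k\}$.} Every proper prefix is negative, with $\mu=-4j$ for some $j\geq1$, and $x_m=g$. Now a letter preceded by a prefix with $\mu=-4j$ (where $j\geq 0$) contributes $2+4j$, $9+4j$ or $12+4j$, according as it is $g$, $h$ or $k$. Modulo $4$ these are $2$, $1$ and $0$ respectively.

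The key step is to show that the maximum is never attained only by a $g$-term. Suppose $x_i=g$ is preceded by a prefix with $\mu=-4j$, $j\geq1$, so it contributes $2+4j$. Since the walk starts at $0$ and moves in steps of $\pm4$, there is a first index $i'<i$ at which the prefix reaches $-4j$. That step is made by a negative letter $x_{i'}\in\{h,k\}$ preceded by a prefix with $\mu=-4(j-1)$. Its contribution is $9+4(j-1)=5+4j$ or $12+4(j-1)=8+4j$, and both exceed $2+4j$. Consequently $e\lambda$ is the maximum of the contributions of the negative letters alone. Each such contribution is $9+4j'=5+4(j'+1)$ or $12+4j'=8+4(j'+1)$ for some $j'\geq0$, so $e\lambda$ is of the form $5+4k$ or $8+4k$.

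The only delicate point is this domination argument in Case~2, namely that the first descent to a given level is always made by $h$ or $k$. Everything else is routine bookkeeping with the formula for $\lambda$.
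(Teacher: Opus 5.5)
Your proof is correct, but it follows a different route from the paper's.

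\textbf{The paper's route.} The paper argues by induction on $m$. The base case $m=2$ is four direct products. For $m\geq 4$ it observes that $x_1\mu=-x_m\mu$, so the middle factor $f=x_2\cdots x_{m-1}$ is itself an idempotent. Splitting $f$ into minimal idempotent factors gives $f\lambda=\max_i f_i\lambda$, which has the required form by induction. Four explicit maxima then finish the job, according to $x_1$ and $x_m$: $\max(2,f\lambda-4,5)$, $\max(2,f\lambda-4,8)$, $\max(9,f\lambda+4,6)$ and $\max(12,f\lambda+4,6)$.

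\textbf{Your route.} You unroll the product rule into $e\lambda=\max_i\bigl(x_i\lambda-(x_1\cdots x_{i-1})\mu\bigr)$ and treat the prefix $\mu$-values as a $\pm 4$ walk. Your first-passage observation replaces the induction entirely: each $(-2,4,6)$-term $2+4j$ is beaten by the term $5+4j$ or $8+4j$ of the negative letter making the first descent to level $-4j$.

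\textbf{Comparison.} Your argument is non-inductive and gives sharper information. For instance, $e\lambda\in\{5,8\}$ whenever $x_1=(-2,4,6)$, and otherwise $e\lambda$ is determined by the negative letters alone. The paper's argument needs no domination step, only four mechanical maxima (plus the small unstated check that $f\lambda-4\in\{1,4\}$ still yields $5$). It also uses the same idempotent-splitting device as the next claim.

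\textbf{A fix needed in your Case~1.} ``Each term is at most $8$'' together with a last-letter contribution of $5$ does not by itself exclude $e\lambda\in\{6,7\}$. Add that $e\lambda$ equals one of the listed contributions, and the only ones that are $\geq 5$ are $9-4=5$ and $12-4=8$. Alternatively, note that a negative letter at a position $2\leq i\leq m-1$ is preceded by a prefix with $\mu\geq 8$, so it contributes at most $4$.
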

\begin{proof}
We proceed by induction on the length $m$ of the product. Observe that $m$ must be even. Consider the base case $m=2$. The following direct calculations deal with the base case: 
\begin{align*}
    &e = (-2, 4, 6)(-9, -4, 10) = (-5, 0, 10); && e=(-2,4,6)(-12,-4,2) = (-8, 0, 6);\\
    &e= (-9, -4, 10)(-2, 4, 6) = (-9, 0, 10); &&e=(-12,-4,2)(-2,4,6) = (-12,0,2).
\end{align*}

For the inductive step, let $m\geq 2$ and assume that the statement holds for products of smaller length.
Using the assumption about proper prefixes of $x_1\dots x_m$, 
observe that
the sign of each $(x_1\dots x_i)\mu$ ($i\in [1,\dots m-1]$) is the same as the sign of $x_1\mu$
and that $x_1\mu = -(x_m\mu)$.
Therefore, $f:=x_2\dots x_{m-1}$ is an idempotent. We decompose $f$ into $f=f_1\dots f_l$ where each $f_i$ is an idempotent satisfying the assumption of the claim. Thus, $f\lambda = \max(f_1\lambda, \dots, f_l\lambda)$ is of the form $5+4k$ or $8+4k$ for some $k\in\mathbb{N}_0$ since each $f_i\lambda$ is by induction. The following are four cases for $e$ depending on the value of $x_1$ and $x_m$:
\begin{bulletmidsentence}
    \item if $e = (-2, 4, 6)f(-9,-4,10)$, then $e\lambda = \max(2, f\lambda -4, 5)$;
    \item if $e = (-2, 4, 6)f(-12, -4, 2)$, then $e\lambda = \max(2, f\lambda-4, 8)$;
    \item if $e = (-9, -4, 10)f(-2, 4, 6)$, then $e\lambda = \max(9, f\lambda + 4, 6)$;
    \item if $e = (-12, -4, 2)f(-2, 4, 6)$, then $e\lambda = \max(12, f\lambda+4, 6)$.
\end{bulletmidsentence}
This completes the induction.
\end{proof}

\begin{claim}
\label{cl:2sideIntersect2}
For all $e\in E(S)$, $e\lambda$ is of the form $5+4k$ or $8+4k$ for some $k\in\mathbb{N}_0$.
\end{claim}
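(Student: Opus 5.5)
We follow the pattern of Claim~\ref{cl:HowsonCounter3}: decompose an arbitrary idempotent into minimal idempotent blocks, each covered by Claim~\ref{cl:2sideIntersect1}, and then use that $\lambda$ of a product of idempotents is the maximum of the individual $\lambda$ values.

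Let $e\in E(S)$ and write $e=x_1\dots x_m$ with each $x_i$ among the three generators of $S$. Let $i_1<i_2<\dots<i_l=m$ be the indices for which the prefix $x_1\dots x_{i_j}$ is an idempotent, i.e.\ has $\mu$-value $0$. Set $e_1:=x_1\dots x_{i_1}$ and $e_j:=x_{i_{j-1}+1}\dots x_{i_j}$ for $j\geq 2$.

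Since $\mu$ is a homomorphism to $\mathbb{Z}$, each $e_j\mu$ is the difference of two prefix values, both equal to $0$. Hence $e_j\mu=0$, so $e_j$ is an idempotent, because $\FI_1$ is $E$-unitary and idempotents are exactly the triples with $x=0$.

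Moreover, no proper initial subword of the product defining $e_j$ is an idempotent. Indeed, such a subword would have $\mu$-value $0$, which would make the corresponding longer prefix of $x_1\dots x_m$ have $\mu$-value $0$, contradicting the choice of the consecutive indices $i_j$. So each $e_j$ satisfies the hypothesis of Claim~\ref{cl:2sideIntersect1}, and $e_j\lambda\in\{5+4k,\,8+4k : k\in\mathbb{N}_0\}$.

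Finally, $e=e_1e_2\dots e_l$ is a product of idempotents, that is, of triples $(-a_j,0,b_j)$. By the multiplication formula, $(-a_1,0,b_1)(-a_2,0,b_2)=(-\max(a_1,a_2),0,\max(b_1,b_2))$, so $e\lambda=\max(e_1\lambda,\dots,e_l\lambda)$. This maximum is one of the $e_j\lambda$, hence of the required form. The argument involves no genuine obstacle; the only point needing care is confirming that the blocks $e_j$ are idempotents with no idempotent proper prefix, which is the $\mu$-counting above.
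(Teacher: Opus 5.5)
Your proposal is correct and follows the paper's own argument. You split $e$ at the zeros of the prefix $\mu$-values, observe that each block satisfies the hypothesis of Claim~\ref{cl:2sideIntersect1}, and use that $\lambda$ of a product of idempotents is the maximum of their $\lambda$-values; this fills in the details the paper's one-line proof leaves implicit (and the appeal to $E$-unitarity is unnecessary, since $e_j\mu=0$ already makes $e_j$ an idempotent).
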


\begin{proof}
This follows from a decomposition of $e$ into $e_1\dots e_m$ such that each $e_i$ satisfies the assumption of Claim~\ref{cl:2sideIntersect1}.
\end{proof}

\begin{claim}
\label{cl:2sideIntersect3}
For all $k\in\mathbb{N}_0$, $(-2, 4, 18+4k)\in S\cap T$.
\end{claim}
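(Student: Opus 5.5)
The plan is to exhibit, for each $k\in\mathbb{N}_0$, an explicit product of generators that equals $(-2,4,18+4k)$. The same word shape will serve for both $S$ and $T$, differing only in which of $(-9,-4,10)$ and $(-8,-4,10)$ is used. Write $\alpha_0:=(-2,4,6)$, $\beta_S:=(-9,-4,10)$ and $\beta_T:=(-8,-4,10)$. The third generator $(-12,-4,2)$ should not be needed.

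First I will compute powers of the generators by a short induction using the multiplication rule $(-a_1,x_1,b_1)(-a_2,x_2,b_2)=(-\max(a_1,a_2-x_1),x_1+x_2,\max(b_1,b_2+x_1))$. For $m,j\in\mathbb{N}$ this should give
\[
\alpha_0^{m}=(-2,\,4m,\,4m+2),\qquad \beta_S^{j}=(-(5+4j),\,-4j,\,10),\qquad \beta_T^{j}=(-(4+4j),\,-4j,\,10).
\]
For instance, $\beta_S^2=(-\max(9,9+4),-8,\max(10,10-4))=(-13,-8,10)$, and the general case follows by the same computation, since each further factor raises $\lambda$ by $4$ and leaves $\rho$ at $10$.

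Next I will take $m=k+2$ and $j=k+1$ and compute $\alpha_0^{k+2}\beta_S^{k+1}$.
\begin{itemize}
\item The $\mu$-component is $4(k+2)-4(k+1)=4$.
\item The $\lambda$-component is $\max\bigl(2,\,(5+4(k+1))-4(k+2)\bigr)=\max(2,1)=2$.
\item The $\rho$-component is $\max\bigl(4(k+2)+2,\,10+4(k+2)\bigr)=4k+18$.
\end{itemize}
Hence $(-2,4,18+4k)=\alpha_0^{k+2}\beta_S^{k+1}\in S$. Replacing $\beta_S$ by $\beta_T$ changes only the $\lambda$-computation, which becomes $\max(2,0)=2$. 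So the same triple equals $\alpha_0^{k+2}\beta_T^{k+1}\in T$, and therefore $(-2,4,18+4k)\in S\cap T$.

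The only real obstacle is choosing the exponents so that $\lambda$ does not exceed $2$. Using $(-12,-4,2)$ instead of $\beta_S$ or $\beta_T$ fails: it would force the number of positive factors to be at least two more than the number of negative ones, which is incompatible with $\mu=4$. With $\beta_S$ and $\beta_T$ the margin works out exactly, as computed above. Once the power formulas are checked, the rest is routine arithmetic.
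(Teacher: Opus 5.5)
Your proof is correct: the power formulas and the evaluations of $(-2,4,6)^{k+2}(-9,-4,10)^{k+1}$ and $(-2,4,6)^{k+2}(-8,-4,10)^{k+1}$ both check out and give $(-2,4,18+4k)$. This is essentially the paper's argument, which uses the same word at $k=0$, namely $(-2,4,6)^2(-9,-4,10)=(-2,4,6)^2(-8,-4,10)=(-2,4,18)$, and then inducts (the natural step being $u\mapsto(-2,4,6)\,u\,(-9,-4,10)$, resp.\ with $(-8,-4,10)$); you have simply written that induction out in closed form.
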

\begin{proof}
This is proved by a straightforward induction, where the base case ($k=0$) is established by $
(-2, 4, 18) = (-2, 4, 6)^2(-9,-4,10) = (-2, 4, 6)^2(-8, -4, 10)$.
\end{proof}
We now prove that $a^\ast_{S\cap T} > v_{\PE(S\cap T)}$. Note that $e\lambda$ is even for every $e\in E(T)$. Thus, $e\lambda\geq 8$ for $e\in E(S\cap T) $ by Claim~\ref{cl:2sideIntersect2}. Recall that $a_{S\cap T}^\ast$ is the least $\lambda$-value of elements in $E(S\cap T)\setminus\PIG(S\cap T)$. Thus, $a_{S\cap T}^\ast \geq 8$. But, Claim~\ref{cl:2sideIntersect3} implies that $v_{\PE(S\cap T)}\leq 6$. This shows 
that condition \ref{it:2side3} of Theorem~\ref{thm:2side} fails to hold for $S\cap T$, and hence $S\cap T$ is not finitely generated.
\end{ex}

\section{Finitely Generated Intersection Problem}
\label{sec:FGIP}

In this section, we establish the second main result of the paper, Theorem \ref{thmB:FGIP}, which states that the FGIP for $\FI_1$ as a semigroup is decidable.

The poof is an application of characterisation theorems from Section~\ref{sec:Character} combined with results by Silva~\cite{SilvaRationalSubset}, in particular, the \emph{Cut and Paste Lemma} from that paper. The latter results are concerned with rational subsets of the monogenic free inverse monoid $\FIM_1$. The theory of rational subsets is customarily set out within the context of monoids, rather than semigroups. We introduce necessary results with respect to rational subsets in $\FIM_1$ and then quickly return to subsemigroups of $\FI_1$. 

A subset $L$ of a monoid $M$ is said to be \emph{rational} 
if it can be written as a (possibly empty) \emph{rational expression} over elements of $M$, that is, 
if it can be obtained from singleton subsets of $M$ by applying a finite (possibly empty) sequence of the following operations:
\begin{bulletmidsentence}
    \item union: $(A,B)\mapsto A\cup B$;
    \item product: $(A,B)\mapsto AB := \{ab: a\in A, b\in B\}$;
    \item (Kleene) star: $A \mapsto A^\ast := \bigcup_{n\in\mathbb{N}_0}A^n$.
\end{bulletmidsentence}
When we speak about algorithmic properties of rational subsets, it will always be understood that the input consists of such a
rational
expression for the subset in question.
A comprehensive introduction to rational subsets can be found in \cite{Berstel}.

\begin{lem}[{Cut and Paste \cite[Lemma 4.2]{SilvaRationalSubset}}]
\label{lem:CutPaste}
Let $L$ be a rational subset of $\FIM_1$. Then, there exist computable natural numbers $n \geq \nu \geq 1$ such that the following three statements hold for all $(-a, x, x+b) \in \PE$:
\begin{thmenumerate}
    \item \label{it:CP1} when $a\geq n$: $(-a, x, x+b)\in L$ if and only if $(-(a-\nu), x, x+b)\in L$;
    \item \label{it:CP2} when $x\geq n$: $(-a, x, x+b)\in L$ if and only if $(-a, x-\nu, x-\nu+b)\in L$;
    \item \label{it:CP3} when $b\geq n$: $(-a, x, x+b)\in L$ if and only if $(-a, x, x+b-\nu)\in L$. \qed
\end{thmenumerate}
\end{lem}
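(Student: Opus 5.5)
The plan is to prove the Cut and Paste Lemma by structural induction on a rational expression for $L$, or, more robustly, through a finite automaton over the free monoid on $\{a,a^{-1}\}$ mapped onto $\FIM_1$. Since $\FIM_1$ is generated as a monoid by $t=(0,1,1)$ and $t^{-1}$, every rational subset $L$ is the image of a regular language $K\subseteq\{t,t^{-1}\}^\ast$. Fix a finite automaton $\mathcal{A}$ for $K$ with state set $Q$. A word $w$ labels a walk on $\mathbb{Z}$ starting at $0$. The image of $w$ is the triple $(-a,x,x+b)$, where $-a$ is the minimum and $x+b$ the maximum visited position, and $x$ is the endpoint. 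So membership of $(-a,x,x+b)$ in $L$ means that some accepted word realises a walk with exactly these three parameters.

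The key step is a pumping argument on walks in the product of $\mathcal{A}$ with position. I treat \ref{it:CP1}; parts \ref{it:CP2} and \ref{it:CP3} are analogous.

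For \ref{it:CP1}, suppose $a\geq n$ and an accepted walk reaches depth $-a$. The walk first crosses each level $-1,-2,\dots,-a$ at some time. If $a>|Q|$, two of these first-arrival levels $-i<-j$ (with $i<j$) carry the same automaton state. The segment between them is a subwalk whose minimum is exactly its endpoint level, and it can be excised. The resulting walk lowers all subsequent positions by less, so the minimum becomes $-(a-(j-i))$. However, excision also shifts the rest of the walk upward by $j-i$, which changes both $x$ and the maximum.

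To avoid this, I will instead cut and paste along levels in a way that preserves the endpoint and the maximum. Decompose the walk into the descent from level $0$ to $-a$ together with the remaining return excursions. Choose repeated states simultaneously at a "down" crossing and the matching last "up" crossing of the same levels, using the product state set $Q\times Q$. Removing both the down segment and the corresponding up segment then lowers the depth by $\delta=j-i$ while keeping $x$ and the maximum fixed. Insertion of the same pair likewise increases the depth by $\delta$.

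The period $\delta$ depends on the walk. To obtain a uniform $\nu$, take $\nu=\operatorname{lcm}\{1,\dots,|Q|^2\}$ and $n$ large compared with $|Q|^2\nu$. Since $\nu$ is a multiple of $\delta$, one can insert $\nu/\delta$ copies. For removal, a pigeonhole argument over at least $|Q|^2+1$ disjoint level blocks, each with a repeated pair, lets us remove segments totalling exactly $\nu$. This last point is the main obstacle: it needs enough repetitions that subset sums of the available $\delta$'s reach $\nu$.

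The plan for that obstacle is to first fix a single pair state $s$ that recurs at many levels, at least $\nu+1$ of them, by taking $n\geq |Q|^2(\nu+1)$. Between consecutive occurrences of $s$ the differences are arbitrary, but the segment between the $0$th and the $\nu$th occurrence forms a removable pumping block. It has some depth $D$, and pumping by $D$ does not directly give $\nu$. I would therefore refine this: restrict to levels where the first-down state equals the last-up state pair $s$, and use the fact that the set of pumping lengths available at $s$ is closed under addition. Hence the lengths realisable from a long enough run include every multiple of their gcd beyond a computable bound, and $\nu$ is chosen as a common multiple of all possible gcds.

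Computability of $n$ and $\nu$ is then clear, since they are explicit functions of $|Q|$, which is computable from the rational expression via a standard conversion to an automaton. Part \ref{it:CP2} uses the same pairing applied to the first and last crossings of levels between $0$ and $x$. Part \ref{it:CP3} is symmetric to \ref{it:CP1}, applied to the maximum.
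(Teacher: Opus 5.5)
The paper does not prove this lemma; it quotes it from Silva. So your argument has to stand on its own, and its central step fails.

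Write $\tau_k$ for the first visit to $-k$ and $\sigma_k$ for the last visit, and let levels $i<j$ carry the same state pair. Excising $u$ (from $\tau_i$ to $\tau_j$) and $v$ (from $\sigma_j$ to $\sigma_i$) does fix the endpoint. It also raises the minimum by exactly $\delta=j-i$, because the prefix before $\tau_i$ and the suffix after $\sigma_i$ stay at or above $-i$. But nothing bounds $u$, $v$ or the middle segment from $\tau_j$ to $\sigma_j$ from above. That middle segment contains everything between the first and last visits to $-a$, and it is shifted up rigidly by $\delta$.

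Concretely, take $w=t^{-a}t^{a+c}t^{-(a+c)}t^{a+x}$ with maximum $c=x+b>x$, accepted by the obvious four-state automaton for $(t^{-1})^\ast t^\ast(t^{-1})^\ast t^\ast$; all levels $-k$ with $1\le k<a$ carry the same pair. Your surgery yields a word with image $(-(a-\delta),x,c+\delta)$. In the same example, inserting copies of $u,v$ lowers the maximum. So the claim that the surgery keeps $x$ and the maximum fixed is false.

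Your one-line treatment of (ii) fails more badly. Both crossings of a level in $[1,x]$ are upward, and the two segments may coincide (as for $t^x$). What (ii) actually needs is that \emph{every} part of the walk above the cut, including an early excursion to the maximum, drops by exactly $\nu$ while nothing drops below $-a$. For $t^{c}t^{-c}t^{x}$ with $c>x$, excising matching blocks from the first and the last climb gives $(-\delta,x-2\delta,c-\delta)$.

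The missing idea is control of \emph{all} visits to the levels being cut, not just two per level. One way to get it:

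\begin{enumerate}
\item Normalise the accepting run. Delete every segment that starts and ends at the same level in the same state and whose interior contains neither a fixed visit to the minimum nor a fixed visit to the maximum. The image is unchanged, and afterwards every level is visited at most $3|Q|$ times.
\item The crossing sequence at the boundary between levels $k$ and $k+1$ (the directions and states of all crossings) now takes at most $N$ computable values.
\item Two boundaries with equal crossing sequences enclose a strip that can be excised or repeated, as in Shepherdson's crossing-sequence argument for two-way automata. Everything on each side of the strip moves rigidly.
\item Strips inside $(-a,0)$, $(0,x)$ and $(x,x+b)$ give (i), (ii) and (iii) respectively.
\end{enumerate}

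For (i) and (iii) alone, your surgery can also be repaired by performing it inside \emph{each} maximal excursion below level $-\nu$ (resp.\ above $x+\nu$) that overshoots the target.

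Your treatment of the uniform period also conflates insertion with removal. Closure under addition only helps when inserting. For removal you can excise only blocks that are present in the given walk, and their lengths need not sum to $\nu$. The standard remedy is as follows. Take at least $N\nu$ disjoint windows of $N+1$ consecutive boundaries; each contains a repeat at some distance $\delta\le N$. Set $\nu=\operatorname{lcm}(1,\dots,N)$. Some $\delta$ then occurs at least $\nu$ times, so you can excise $\nu/\delta$ disjoint strips of width $\delta$.
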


\begin{lem}[{\cite[Corollary 4.3]{SilvaRationalSubset}}]
\label{lem:CutPasteCor}
Let $L$ be a rational subset of $\FIM_1$. If $n$ and $\nu$ are natural numbers from the Cut and Paste Lemma (\ref{lem:CutPaste}), then the statement of the Cut and Paste Lemma remains true when $n$ and $\nu$ are replaced by $n'$ and $m\nu$, respectively, where $m,n'\in\mathbb{N}$ with $n'\geq n+m\nu$. \qed
\end{lem}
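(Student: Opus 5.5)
We deduce Lemma~\ref{lem:CutPasteCor} directly from the Cut and Paste Lemma~\ref{lem:CutPaste}, by iterating each of the three equivalences. Fix $m,n'\in\mathbb{N}$ with $n'\geq n+m\nu$. Since $n'\geq n\geq\nu$, we have $n'\geq 1$ and $m\nu\geq 1$, so the new pair has the required shape; the one point to watch is the ordering $n'\geq m\nu$, which holds because $n'\geq n+m\nu>m\nu$. It then remains to verify statements \ref{it:CP1}--\ref{it:CP3} with $n'$ and $m\nu$ in place of $n$ and $\nu$. The three parts are symmetric, so we describe \ref{it:CP1}.

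Let $(-a,x,x+b)\in\PE$ with $a\geq n'$. For $j\in[0,m]$ put $a_j:=a-j\nu$. Since $a\geq n+m\nu$, we get $a_j\geq n+(m-j)\nu\geq n$ for every $j\in[0,m-1]$. Hence part~\ref{it:CP1} of Lemma~\ref{lem:CutPaste} applies to each of the triples $(-a_j,x,x+b)$ with $j\in[0,m-1]$, and gives
\[
(-a_j,x,x+b)\in L\iff(-a_{j+1},x,x+b)\in L .
\]
Chaining these $m$ equivalences yields $(-a,x,x+b)\in L\iff(-(a-m\nu),x,x+b)\in L$, which is the new statement \ref{it:CP1}. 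We should also check that every intermediate triple lies in $\PE$, so that the lemma is really applicable to it. This is clear: $a_j\geq n\geq1$, so $a_j+x+b>0$, and $x\geq 0$ is unchanged.

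Parts \ref{it:CP2} and \ref{it:CP3} are handled in exactly the same way. For \ref{it:CP2} we decrease $x$ in steps of $\nu$, and all intermediate values of $x$ stay $\geq n\geq 1$, so each triple is positive and lies in $\PE$. For \ref{it:CP3} we decrease $b$ in steps of $\nu$, and all intermediate values of $b$ stay $\geq n$. The triples reached after the final step satisfy $a-m\nu\geq n\geq 0$ (respectively $x-m\nu\geq n$, $b-m\nu\geq n$), so they also lie in $\PE$. Computability of $n'$ and $m\nu$ is immediate from the computability of $n$ and $\nu$.

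No real obstacle is expected. The only care needed is the bookkeeping that every intermediate parameter remains at least $n$, and that is exactly what the hypothesis $n'\geq n+m\nu$ secures.
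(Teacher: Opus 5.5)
Your proof is correct. Chaining $m$ applications of each part of Lemma~\ref{lem:CutPaste} at the points $a-j\nu$ (resp.\ $x-j\nu$, $b-j\nu$) for $0\leq j\leq m-1$, all of which are at least $n$, gives exactly the required equivalences, and $n'\geq m\nu$ keeps the new parameters in the required shape. The paper gives no proof of its own here (it simply quotes Silva's Corollary~4.3), and your direct iteration is the natural argument; with the downward formulation of the Cut and Paste Lemma used in the paper, your bookkeeping in fact only needs $n'\geq n+(m-1)\nu$, so the hypothesis has some slack.
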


In order to state the next result from \cite{SilvaRationalSubset},
we define some subsets of $\PE$. As it is still about rational subsets of $\FIM_1$, we 
take $\FIM_1$ to be $\FI_1\cup\{(0,0,0)\}$,  and add $(0,0,0)$ to $\PE$. 
Let $n,\nu\in\mathbb{N}$; for technical reasons in later arguments, assume $n>\nu$. Define:
\begin{align*}
    &\W_{n} := \{(-a, x, x+b)\in \PE : a, x, b < n\};\\
    &\V_{n,\nu}^1 := \{(-a, x, x+b) \in \W_{n} : n-\nu\leq a <n\};\\
    &\V_{n,\nu}^2 := \{(-a, x, x+b)\in \W_{n} : n-\nu \leq  x <n\};\\
    &\V_{n, \nu}^3 := \{(-a, x, x+b) \in \W_{n} : n-\nu \leq b <n \};\\
    &\V_{n,\nu} := \V_{n,\nu}^1\cup \V_{n,\nu}^2 \cup \V_{n,\nu}^3.
\end{align*}
Note that $\V_{n,\nu}\subseteq \W_n$ by definition.

\begin{lem}[{\cite[Theorem 5.1]{SilvaRationalSubset}}]
\label{la:SilvaThm}
Let $L$ be a rational subset of $\FIM_1$. Then, there exist $n > \nu \geq 1$ such that the following hold: 
\begin{thmenumerate}
    \item \label{it:SilvaThm1} the Cut and Paste Lemma (\ref{lem:CutPaste}) holds with respect to $n, \nu$;
    \item \label{it:SilvaThm2} $u\in \PE(L)$ if and only if either $u\in L \cap\W_{n}$ or there is $u'\in L\cap \V_{n,\nu}$ that is obtained from $u$ by applying the Cut and Paste Lemma. \qed
\end{thmenumerate}
\end{lem}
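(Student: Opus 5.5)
The plan is to derive the statement directly from the Cut and Paste Lemma (\ref{lem:CutPaste}) and Lemma~\ref{lem:CutPasteCor}, by reducing an arbitrary element of $\PE(L)$ to a bounded one. First, fix computable $n_0\geq\nu\geq 1$ from Lemma~\ref{lem:CutPaste}. Then use Lemma~\ref{lem:CutPasteCor} with $m=1$ to pass to $n:=n_0+\nu+1$, which gives $n>\nu$ and keeps the Cut and Paste property with respect to $(n,\nu)$; this settles \ref{it:SilvaThm1}.

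For \ref{it:SilvaThm2}, take $u=(-a,x,x+b)\in\PE$. If $a,x,b<n$ then $u\in\W_n$, and $u\in L$ if and only if $u\in L\cap\W_n$. Otherwise at least one coordinate is $\geq n$. Repeatedly apply the reductions of Lemma~\ref{lem:CutPaste}: whenever $a\geq n$, replace $a$ by $a-\nu$ using \ref{it:CP1}; whenever $x\geq n$, replace $x$ by $x-\nu$ using \ref{it:CP2}; whenever $b\geq n$, replace $b$ by $b-\nu$ using \ref{it:CP3}. Each step preserves membership in $L$ in both directions, and the process terminates because the sum $a+x+b$ decreases strictly. The final element $u'$ has all coordinates $<n$, so $u'\in\W_n$. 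The coordinate that was reduced last was $\geq n$ just before its final step, so afterwards it lies in $[n-\nu,n-1]$; hence $u'\in\V_{n,\nu}$. This gives the forward direction: $u\in L$ implies $u'\in L\cap\V_{n,\nu}$. The converse follows from the same chain of equivalences, read in the opposite direction.

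The main obstacle is the bookkeeping. The three reductions act on independent coordinates, and each is valid at every step because its hypothesis refers only to the coordinate being reduced. We must also check that we never leave $\PE$: subtracting $\nu$ from $x\geq n>\nu$ leaves $x>0$, and reducing $a$ or $b$ keeps them non-negative. The element $(0,0,0)$ is not an issue, since it lies in $\W_n$. The remaining point is the phrase ``obtained from $u$ by applying the Cut and Paste Lemma'', which I would interpret as a finite sequence of such reductions, possibly mixed with the inverse ``paste'' moves; under that reading the argument above gives exactly \ref{it:SilvaThm2}.
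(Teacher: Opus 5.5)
Your proof is correct. The paper gives no argument of its own here, since it quotes Silva's Theorem~5.1. Your reduction---apply parts \ref{it:CP1}--\ref{it:CP3} of Lemma~\ref{lem:CutPaste} until every coordinate drops below $n$, so each reduced coordinate lands in $[n-\nu,n-1]$---is exactly the mechanism the paper spells out in Remark~\ref{rmk:SilvaThm}(1) and uses throughout Section~\ref{sec:FGIP}. It also shows that \ref{it:SilvaThm2} follows from \ref{it:SilvaThm1} for \emph{any} $n>\nu$. The paper tacitly relies on this when it passes to common parameters via Lemma~\ref{lem:CutPasteCor}, as in Lemma~\ref{lem:CutPasteFGSubsem}.
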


\begin{rmk}
\label{rmk:SilvaThm}
\begin{rmkenumerate}
    \item Suppose $u = (-a, x, x+b)\in \PE(L)$. Either $a,x,b<n$ and so $u\in\W_{n}$, or we apply \ref{it:CP1}, \ref{it:CP2}, \ref{it:CP3} of the Cut and Paste Lemma $i,j,k$ times, respectively, and obtain an element $u'\in L\cap \V_{n,\nu}$. 
In particular, if $a\geq n$, we apply \ref{it:CP1} of the Cut and Paste Lemma $i\geq 1$ times
until we reach $n-\nu\leq a-i\nu <n$. Similarly, if $x\geq n$ (resp. $b\geq n$), we apply \ref{it:CP2} (resp. \ref{it:CP3}) of the Cut and Paste Lemma $j\geq 1$ (resp. $k\geq1$) times. Notice that the obtained element $u'\in \V_{n,\nu}^1$ (resp. $u'\in \V_{n,\nu}^2$, $u'\in \V_{n,\nu}^3$) if $i\geq 1$ (resp. $j\geq 1$, $k\geq1$).  
    \item A dual statement concerning elements of $\NE(L)$ is stated in \cite{SilvaRationalSubset}. This can be obtained by considering $\PE(L^{-1})$; note $L$ is a rational subset of $\FIM_1$ if and only if $L^{-1}$ is. By Lemma~\ref{lem:CutPasteCor}, we can always compute $n>\nu\geq 1$ so that the statement of Lemma~\ref{la:SilvaThm} holds for both $L$ and $L^{-1}$. We specifically record this for finitely generated subsemigroups of $\FI_1$ below.
\end{rmkenumerate}
\end{rmk}

\begin{lem}
\label{lem:CutPasteFGSubsem}
Let $S$ be a finitely generated subsemigroup of $\FI_1$. Then, there exist natural numbers $n>\nu\geq 1$ such that Lemma~\ref{la:SilvaThm} simultaneously holds for both $S$ and $S^{-1}$ with those parameters.
Furthermore, such $n$ and $\nu$  
can be algorithmically computed from
a generating set of $S$. 
\end{lem}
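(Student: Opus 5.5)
The plan is to regard $S$ and $S^{-1}$ as rational subsets of $\FIM_1$ and then merge the parameters supplied by Lemma~\ref{la:SilvaThm} for each of them. Let $X$ be a finite generating set of $S$. Then $S=XX^\ast$ is a rational expression over elements of $\FIM_1$, so $S$ is a rational subset of $\FIM_1$. Since inversion is an anti-automorphism, $S^{-1}=X^{-1}(X^{-1})^\ast$ is also rational, and a rational expression for it is obtained directly from $X^{-1}$. The inverse of each generator $(-a,x,b)$ is computed by the formula $(-(a+x),-x,b-x)$. Both expressions are therefore effectively computable from $X$.

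First, apply Lemma~\ref{la:SilvaThm} separately to $S$ and to $S^{-1}$, obtaining pairs $(n_1,\nu_1)$ and $(n_2,\nu_2)$. The one point needing care is computability. Lemma~\ref{lem:CutPaste} explicitly provides computable $n,\nu$. For Lemma~\ref{la:SilvaThm}, the proof in \cite{SilvaRationalSubset} takes the Cut and Paste parameters, possibly enlarged via Lemma~\ref{lem:CutPasteCor}, and these are computable. So I will take $(n_i,\nu_i)$ to be computable Cut and Paste parameters for which the conclusion of Lemma~\ref{la:SilvaThm} holds.

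Second, unify the two pairs using Lemma~\ref{lem:CutPasteCor}. Set $\nu:=\nu_1\nu_2$ (or the lcm), and choose $n>\max(n_1,n_2)+\nu$. Lemma~\ref{lem:CutPasteCor} then says that the Cut and Paste Lemma holds for $S$ with $(n,\nu)$, since $n\ge n_1+\nu_2\nu_1$ and $\nu$ is a multiple of $\nu_1$; the same holds for $S^{-1}$.

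The remaining step is the main obstacle. We must verify that clause \ref{it:SilvaThm2} of Lemma~\ref{la:SilvaThm} also survives the change of parameters, and I would argue this directly rather than cite it. Take $u\in\PE(S)$ outside $\W_n$. As in Remark~\ref{rmk:SilvaThm}, apply \ref{it:CP1}--\ref{it:CP3} with period $\nu$ to each coordinate that is at least $n$. Each such coordinate is brought into $[n-\nu,n)$, and the membership equivalences hold because the Cut and Paste Lemma is valid for $(n,\nu)$. This produces $u'\in S\cap\V_{n,\nu}$. Conversely, if $u'\in S\cap \V_{n,\nu}$ and $u$ is obtained from $u'$ by the reverse operations, then the "if and only if" statements in the Cut and Paste Lemma give $u\in S$. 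Hence clause \ref{it:SilvaThm2} follows from clause \ref{it:SilvaThm1} for any valid pair with $n>\nu$; the only requirement is that coordinates in $[n-\nu,n)$ can be reached by subtracting multiples of $\nu$, which is immediate. The same argument works verbatim for $S^{-1}$. Since every step is an explicit arithmetic operation on computable numbers, $n$ and $\nu$ are algorithmically computable from $X$.
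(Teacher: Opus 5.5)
Your proposal is correct and follows the paper's proof. Like the paper, you realise $S=XX^\ast$ and $S^{-1}=X^{-1}(X^{-1})^\ast$ as rational subsets of $\FIM_1$, then use Lemmas~\ref{la:SilvaThm} and~\ref{lem:CutPasteCor} to get a common computable pair $n>\nu$. Your explicit check that clause~\ref{it:SilvaThm2} of Lemma~\ref{la:SilvaThm} follows from the Cut and Paste Lemma for any valid pair with $n>\nu$ fills in a point the paper leaves implicit, since Lemma~\ref{lem:CutPasteCor} only concerns the Cut and Paste Lemma itself; it also makes your appeal to the internals of Silva's proof unnecessary.
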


\begin{proof}
Suppose $S = \langle X\rangle$ where $X\subseteq\FI_1\subseteq\FIM_1$ is finite. 
Then $S = XX^\ast$, a rational expression over  $\FIM_1$, and so 
is a rational subset of $\FIM_1$. Similarly, $S^{-1}$ is a rational subset. The result now follows from Lemmas~\ref{la:SilvaThm} and~\ref{lem:CutPasteCor}. 
\end{proof}

From now on, when we refer to computable natural numbers such that Lemma~\ref{la:SilvaThm} holds for a finitely generated subsemigroup $S$ of $\FI_1$, we mean those natural numbers such that Lemma~\ref{la:SilvaThm} holds for both $S$ and $S^{-1}$ as in Lemma~\ref{lem:CutPasteFGSubsem}.

\begin{rmk}
\label{rem:salient}
Recall the definition of periodic subsets (Definition~\ref{defn:PeriodicSets}). 
This definition was partially motivated by the Cut and Paste Lemma (\ref{lem:CutPaste}), and it may be viewed as a weakening of the latter. In particular, the horizontal periodicity and vertical periodicity, respectively, correspond to \ref{it:CP1} and  \ref{it:CP3} of the Cut and Paste Lemma. In the proof of the forward direction of Theorem~\ref{thm:2side},
condition \ref{it:2side2} can be derived immediately by applying the Cut and Paste Lemma.
\end{rmk}

We require one further result before embarking on the proof of the main result. 

\noindent
\textbf{Membership problem for rational subsets of a monoid:} Let $M$ be a monoid generated by a finite set $A$ and let $\pi\colon A^\ast \to M$ be the natural homomorphism. Given $w\in A^\ast$ and a rational subset $L\subseteq M$, is it decidable whether $w\pi \in L$?

The decidability of membership problem for rational subsets of a monoid $M$ does not depend on the choice of a generating set or associated natural homomorphism. Thus, for our purposes, one may consider $A = \{a, a^{-1}\}$ and the canonical homomorphism $\pi\colon A^\ast \to \FIM_1$ defined by $a\mapsto (0, 1, 1)$, $a^{-1}\mapsto (-1, -1, 0)$.

\begin{thm}[{\cite[Theorem 3]{DiekertLohreyMiller}, \cite[Corollary 3.2]{SilvaRationalSubset}}]
\label{thm:Membership}
The rational subset membership problem for a free inverse monoid of finite rank is decidable.
\end{thm}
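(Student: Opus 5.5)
Although the theorem is quoted from the literature, it has a short self-contained proof via Munn trees, and this is the argument I would give. Fix a finite alphabet $A$ with formal inverses $A^{-1}$ and the natural homomorphism $\pi\colon (A\cup A^{-1})^\ast\to\FIM_A$ onto the free inverse monoid. The plan has three steps. First, convert the rational subset into an automaton. Second, use Munn's solution of the word problem to say exactly when a word represents a given element. Third, decide membership by a finite reachability search in a product automaton.

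\emph{Step 1 (reduction to automata).} The given rational expression for $L\subseteq\FIM_A$ is built from singletons. Each singleton $\{m\}$ is replaced by $\{w_m\}$ for a word $w_m$ with $w_m\pi=m$; in the monogenic case a triple $(-a,x,b)$ is represented, for instance, by $a^{-a'}a^{a'+b'}a^{-(b'-x)}$ with the obvious reading. By Kleene's theorem we then effectively obtain a finite automaton $\mathcal A$ over $A\cup A^{-1}$ with $L=\mathcal{L}(\mathcal A)\pi$. So $w\pi\in L$ holds iff some word $u$ accepted by $\mathcal A$ satisfies $u\pi=w\pi$.

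\emph{Step 2 (Munn's criterion).} Let $T$ be the Munn tree of $w\pi$. It is the finite subtree of the Cayley graph of the free group $F_A$ traced by reading $w$ from the identity vertex $1$, with terminal vertex $t=w\,\mathrm{red}$, the free reduction of $w$. By Munn's theorem, $u\pi=w\pi$ iff the path traced by $u$ from $1$ in the Cayley graph of $F_A$ satisfies three conditions:
\begin{thmenumerate}
\item it stays inside $T$;
\item it traverses every edge of $T$;
\item it ends at $t$.
\end{thmenumerate}
All three conditions concern only the finite object $T$, which is computable from $w$.

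\emph{Step 3 (finite product construction).} Build an automaton $\mathcal B$ with state set $Q\times V(T)\times 2^{E(T)}$, where $Q$ is the state set of $\mathcal A$. A letter $c$ moves $(q,v,F)$ to $(q',v',F\cup\{e\})$ exactly when the following hold:
\begin{thmenumerate}
\item $q\xrightarrow{c}q'$ is a transition of $\mathcal A$;
\item the edge $e$ of $T$ is the one leaving $v$ with label $c$, read forwards for $c\in A$ and backwards for $c\in A^{-1}$;
\item $v'$ is the other endpoint of $e$.
\end{thmenumerate}
If no such edge exists, there is no transition. The initial state is $(q_0,1,\emptyset)$, and the accepting states are $(q_f,t,E(T))$ with $q_f$ final in $\mathcal A$. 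By Step 2, $w\pi\in L$ iff $\mathcal B$ accepts some word. Since $\mathcal B$ is finite and effectively constructed, this is a reachability question in a finite directed graph, and hence decidable.

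The only genuinely delicate point is Step 2, namely invoking Munn's theorem in the "path" form. For this one needs that the Munn tree of $u\pi$ is precisely the set of edges visited by the path of $u$, and that equality of elements is equality of birooted trees. Both are standard facts about free inverse monoids, so I expect no real obstacle. In the monogenic case $\FIM_1$ the argument specialises further: $T$ is the path on vertices $[-a,b]$ and $t=x$, so one can even dispense with the edge subsets and simply track the leftmost and rightmost positions reached.
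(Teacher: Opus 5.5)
Your proof is correct. The paper gives no proof of this statement: it is quoted from \cite{DiekertLohreyMiller} and \cite{SilvaRationalSubset}, and used only as a black box in Steps 4, 6 and 7 of the proof of Theorem~\ref{thmB:FGIP}.

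What you supply is the standard self-contained argument. By Munn's solution of the word problem, the fibre $\pi^{-1}(w\pi)$ is the regular language recognised by the deterministic automaton on $V(T)\times 2^{E(T)}$ built from the Munn tree $T$ of $w\pi$. This is exactly your conditions (i)--(iii). They also handle the degenerate case $w\pi=1$ correctly, since then only the empty word is accepted. Membership $w\pi\in L$ then becomes non-emptiness of $\mathcal{L}(\mathcal{A})\cap\pi^{-1}(w\pi)$, which is precisely your product automaton $\mathcal{B}$.

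Relative to the citation, your route buys transparency. Your closing remark is also a real gain in rank one. A walk from $0$ in the path $[-a,b]$ traverses exactly the edges of the interval between its minimum and maximum. So tracking $(\min,\max)$ cuts $\mathcal{B}$ to at most $|Q|\cdot(a+b+1)^3$ states, giving a procedure for $\FIM_1$ that is polynomial in $|Q|$ and $|w|$. As Section~\ref{sec:RemarkOpen} notes, though, the real obstacle to a complexity bound for the FGIP is computing $n$ and $\nu$.

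What the citation buys is the general complexity statement. The paper quotes an NP bound from \cite{DiekertLohreyMiller}, whereas your subset construction over $2^{E(T)}$ is, as written, exponential in $|E(T)|$ in higher rank.

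One cosmetic point: in Step 1 you use $a$ both for the generator and for the first parameter of the triple. Writing $g^{-a}g^{a+b}g^{-(b-x)}$ for a generator $g$ avoids the clash.
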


We now begin the proof of Theorem \ref{thmB:FGIP}.
While the details are somewhat technical, the underlying idea is quite simple and may be summarised as follows.
We are given two finitely generated subsemigroups $S,T\leq \FI_1$, and we need to decide whether or not their intersection $S\cap T$ is finitely generated.
If at least one of $S$ or $T$ is one-sided, then so is $S\cap T$, and is finitely generated by 
Proposition~\ref{prop:FI1fgintersectpairs}.
So suppose that both $S$ and $T$ are two-sided.
The intersection $S\cap T$ may still be one-sided, as we saw in 
Example \ref{ex:HowsonCounter1}.
We determine whether $S\cap T$ is one-sided or two-sided using
Lemma \ref{lem:FGIPDet2side} below.
When $S\cap T$ is one-sided, say $S\cap T\subseteq \PE$, it will be finitely generated if and only if it intersects only finitely many sets $\PE_{a,b}$ by 
Theorem~\ref{thm:1side}. Whether or not this is the case is decided using Lemma~\ref{lem:FGIP1sideorEmpty} below.
Now consider the case when $S\cap T$ is two-sided.
Then it is finitely generated if and only if it satisfies conditions 
\ref{it:2side1}--\ref{it:2side3} of Theorem \ref{thm:2side}.
In fact, it turns out that \ref{it:2side1} and \ref{it:2side2} are automatically satisfied, and so \ref{it:2side3} is the decider (Lemma \ref{lem:FGIP2sideIntersect1} below).
To decide whether or not \ref{it:2side3} holds we have to 
show how to compute the numbers
$h_{\PE(S\cap T)}, h_{\PE(S^{-1}\cap T^{-1})}, v_{\PE(S\cap T)}, v_{\PE(S^{-1}\cap T^{-1})}$.
This is accomplished using Lemmas \ref{lem:FGIP2sideIntersect2}--\ref{lem:FGIPast2}.

We now start establishing technical details. In the sequence of lemmas below, $S$ and $T$ will be two finitely generated subsemigroups of $\FI_1$.
Also fixed will be natural numbers $n>\nu\geq 1$ such that Lemma~\ref{la:SilvaThm} holds for both $S$ and $T$; this is always possible due to Lemma~\ref{lem:CutPasteCor}.

The following lemma will be used to determine whether the intersection $S\cap T$ is one-sided or two-sided.

\begin{lem}
\label{lem:FGIPDet2side}
With the notation above, the following hold:
\begin{thmenumerate}
    \item \label{it:FGIPDet2side-1} $\P(S)\cap \P(T)\neq \emptyset$ if and only if $S\cap T\cap \P(\W_{n})\neq\emptyset$;
    \item \label{it:FGIPDet2side-2} $\P(S^{-1})\cap \P(T^{-1}) \neq \emptyset$ if and only if $S^{-1}\cap T^{-1}\cap \P(\W_{n})\neq \emptyset$.
\end{thmenumerate}
\end{lem}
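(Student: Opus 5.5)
The backward direction of \ref{it:FGIPDet2side-1} is immediate, since $S\cap T\cap\P(\W_n)\subseteq \P(S)\cap\P(T)$. For the forward direction, take any $u=(-a,x,x+b)\in\P(S)\cap\P(T)$ and, among all such elements, choose one minimising $a+x+b$. I will show that $a,x,b<n$, so that $u\in \P(\W_n)$. The tool is Lemma~\ref{la:SilvaThm}\ref{it:SilvaThm1}: the Cut and Paste Lemma holds for both $S$ and $T$ with the \emph{same} parameters $n>\nu\geq 1$.

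Suppose, aiming for a contradiction, that $a\geq n$. By \ref{it:CP1} of Lemma~\ref{lem:CutPaste} applied to $S$, we get $(-(a-\nu),x,x+b)\in S$. Applying the same clause to $T$ gives $(-(a-\nu),x,x+b)\in T$. This new element is still positive, since $x>0$. It is also a genuine element of $\FI_1$ rather than the identity $(0,0,0)$, again because $x>0$. Its parameter sum is strictly smaller than $a+x+b$, contradicting minimality.

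The cases $b\geq n$ and $x\geq n$ go the same way, using \ref{it:CP3} and \ref{it:CP2} respectively. For $x\geq n$ we replace $x$ by $x-\nu$, and we must check the result is still positive: $x-\nu\geq n-\nu>0$ because $n>\nu$. This is exactly where the standing assumption $n>\nu$ is used. In every case the shortened element lies in both $S$ and $T$, because the equivalences in Lemma~\ref{lem:CutPaste} hold simultaneously for both rational sets with the common $n,\nu$. Hence a minimal element has $a,x,b<n$, which proves \ref{it:FGIPDet2side-1}.

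For \ref{it:FGIPDet2side-2}, I will use the convention fixed after Lemma~\ref{lem:CutPasteFGSubsem}: the parameters $n,\nu$ make Lemma~\ref{la:SilvaThm} hold for $S^{-1}$ and $T^{-1}$ as well. The identical minimality argument then applies to $\P(S^{-1})\cap\P(T^{-1})$.

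The only point needing care, and so the main obstacle, is ensuring that cut steps never leave $\P$ or land on $(0,0,0)$. Steps \ref{it:CP1} and \ref{it:CP3} do not change $x$, so they are safe. Step \ref{it:CP2} is safe precisely because $n>\nu$.
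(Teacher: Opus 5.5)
Your proof is correct and is essentially the paper's argument. The paper invokes Lemma~\ref{la:SilvaThm}\ref{it:SilvaThm2} to reduce $u$ to some $u'\in S\cap T\cap\V_{n,\nu}$, then checks positivity: either \ref{it:CP2} was never applied, so $u'\mu=u\mu>0$, or $u'\in\V_{n,\nu}^2\subseteq\P(\W_n)$ because $n>\nu$. Your minimal-element descent, which uses only the Cut and Paste Lemma with parameters common to $S$ and $T$, is the same iterated reduction, and it has the small advantage of showing explicitly why a single reduced element lies in both $S$ and $T$.
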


\begin{proof}
We prove \ref{it:FGIPDet2side-1}; the proof of \ref{it:FGIPDet2side-2} is dual. The backward direction is trivial. For the forward direction, suppose $u\in \P(S)\cap \P(T)$. Since $n> \nu \geq 1$ are common for both $S$ and $T$, Lemma~\ref{la:SilvaThm} \ref{it:SilvaThm2} implies that either $u\in \W_{n}$ or there is $u'\in S\cap T\cap \V_{n,\nu}$ that is obtained from $u$ by applying the Cut and Paste Lemma to both $S$ and $T$. The result immediately follows if the former holds. Assume the latter. Note that the second component of $u'$ is obtained by applying \ref{it:CP2} of the Cut and Paste Lemma. If \ref{it:CP2} is applied zero times, then $u'\mu = u\mu$; otherwise, $u'\in \V_{n,\nu}^2$ (Remark~\ref{rmk:SilvaThm}). Notice that $\V_{n,\nu}^2\subseteq \P(\W_{n})$. Thus, in either case, $u'\in \P(\W_{n})$, completing the proof.
\end{proof}

The following is concerned with the case when $S\cap T$ is one-sided or empty:

\begin{lem}
\label{lem:FGIP1sideorEmpty}
Assume $S\cap T\subseteq\PE$. Then, $S\cap T$ has a non-empty intersection with infinitely many sets $\PE_{a,b}$ if and only if $S\cap T\cap (\V_{n,\nu}^1\cup \V_{n,\nu}^3) \neq \emptyset$.
\end{lem}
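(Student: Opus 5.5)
The plan is to exploit the Cut and Paste Lemma applied simultaneously to $S$ and $T$ with the common parameters $n>\nu\geq 1$. Everything will be expressed in coordinates $(-a,x,x+b)$: the set $\PE_{a,b}$ containing an element is determined by the pair $(a,b)$. Since $S\cap T\subseteq \PE$, membership in $S\cap T$ amounts to membership in $\PE(S)\cap\PE(T)$. Statements \ref{it:CP1} and \ref{it:CP3} of Lemma~\ref{lem:CutPaste}, valid for both $S$ and $T$, can then be used to move elements of $S\cap T$ up or down in the $a$- and $b$-coordinates in steps of $\nu$, once the relevant coordinate is at least $n$.

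For the backward direction, take $u=(-a,x,x+b)\in S\cap T\cap \V^1_{n,\nu}$, so that $n-\nu\leq a<n$; the case $\V^3_{n,\nu}$ is symmetric, working with $b$. Applying \ref{it:CP1} in the ``if'' direction to both $S$ and $T$ shows that $(-(a+k\nu),x,x+b)\in S\cap T$ for all $k\in\mathbb{N}_0$. The first step is justified because $a+\nu\geq n$, and each later step because the larger coordinate remains at least $n$. These elements lie in the pairwise distinct sets $\PE_{a+k\nu,b}$, so $S\cap T$ meets infinitely many of the sets $\PE_{a,b}$.

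For the forward direction, suppose $S\cap T$ meets infinitely many $\PE_{a,b}$. Then it contains some $u=(-a,x,x+b)$ with $a\geq n$ or $b\geq n$, since there are only finitely many pairs with $a,b<n$. Say $a\geq n$. Applying \ref{it:CP1} repeatedly to both $S$ and $T$ gives $(-a',x,x+b)\in S\cap T$ with $n-\nu\leq a'<n$. Applying \ref{it:CP2} and \ref{it:CP3} brings $x$ and $b$ below $n$ whenever they are at least $n$; these operations do not alter the first coordinate. Combining these moves exactly as in Remark~\ref{rmk:SilvaThm}, we obtain $u'\in S\cap T\cap\W_n$ with first coordinate in $[n-\nu,n-1]$, that is, $u'\in\V^1_{n,\nu}$. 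If instead $b\geq n$, the same argument yields an element of $\V^3_{n,\nu}$.

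The main thing to check is that the Cut and Paste moves stay valid for both semigroups simultaneously. This holds because $n,\nu$ are common to $S$ and $T$ and each move is an ``if and only if'' depending only on the coordinates. A second check is that no move turns a positive element into $(0,0,0)$ or leaves $\PE$. Reducing $x$ via \ref{it:CP2} only happens while $x\geq n$, which keeps $x-\nu\geq n-\nu\geq 1$, and the $a$- and $b$-moves do not touch $x$. Finally, the idempotent case $x=0$ is covered by the same argument, since Lemma~\ref{lem:CutPaste} is stated for all of $\PE$.
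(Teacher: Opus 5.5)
Your proof is correct and matches the paper's. The backward direction is identical: you iterate \ref{it:CP1} or \ref{it:CP3} upwards in both $S$ and $T$. In the forward direction you carry out by hand the reduction via \ref{it:CP1}--\ref{it:CP3} that the paper gets by citing Lemma~\ref{la:SilvaThm}\ref{it:SilvaThm2} together with Remark~\ref{rmk:SilvaThm}.
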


\begin{proof}
($\Rightarrow$) Using the assumption, pick $u = (-a, x, x+b) \in S\cap T$ with $a\geq n$ or $b\geq n$. Assume the former. Then, Lemma~\ref{la:SilvaThm} \ref{it:SilvaThm2} yields an element $u'\in S\cap T\cap \V_{n,\nu}$ that is obtained from applying the Cut and Paste Lemma to both $S$ and $T$. Since $a\geq n$, \ref{it:CP1} of the Cut and Paste Lemma is applied at least once, and hence $u'\in \V_{n,\nu}^1$ (Remark~\ref{rmk:SilvaThm}). Similarly, if $b\geq n$, we obtain an element in $S\cap T\cap \V_{n,\nu}^3$.

\noindent ($\Leftarrow$)
Suppose $(-a, x, x+b)\in S\cap T\cap \V_{n,\nu}^1$.
Using \ref{it:CP1} of the Cut and Paste Lemma $i\in\mathbb{N}$ times in both $S$ and $T$ yields
$(-a-i\nu,x,x+b)\in S\cap T\cap \PE_{a+i\nu,b}$.
The case $(-a, x, x+b)\in S\cap T\cap \V_{n,\nu}^3$ is analogous using 
\ref{it:CP3} of the Cut and Paste Lemma.
\end{proof}

The remaining lemmas deal
with the case where $S\cap T$ is two-sided; this assumption is implicit throughout.

\begin{lem}
\label{lem:FGIP2SideIntersectPeriod}
The subsemigroup $S\cap T$ is periodic from $n-\nu$ with period $\nu$.
\end{lem}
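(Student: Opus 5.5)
We must show that both $\PE(S\cap T)$ and $\PE(S^{-1}\cap T^{-1})$ are horizontally and vertically periodic from $n-\nu$ with period $\nu$. Note that $(S\cap T)^{-1}=S^{-1}\cap T^{-1}$. By the convention fixed before Lemma~\ref{lem:FGIPDet2side} (via Lemma~\ref{lem:CutPasteFGSubsem}), the parameters $n>\nu\geq 1$ make Lemma~\ref{la:SilvaThm}, and hence the Cut and Paste Lemma~\ref{lem:CutPaste}, hold simultaneously for $S$, $S^{-1}$, $T$ and $T^{-1}$. It therefore suffices to treat $L:=\PE(S\cap T)$; the inverse case is word-for-word the same with $S^{-1},T^{-1}$ in place of $S,T$.

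For horizontal periodicity, fix $a,b\in\mathbb{N}_0$ with $a\geq n-\nu$. We must show $(\PE_{a,b}\cap L)\mu=(\PE_{a+\nu,b}\cap L)\mu$. This amounts to: for every $x\geq 0$, we have $(-a,x,x+b)\in S\cap T$ if and only if $(-(a+\nu),x,x+b)\in S\cap T$. Since $a+\nu\geq n$, part~\ref{it:CP1} of the Cut and Paste Lemma applied to the element $(-(a+\nu),x,x+b)\in\PE$ gives the equivalence
\[
(-(a+\nu),x,x+b)\in S \iff (-a,x,x+b)\in S,
\]
and likewise for $T$. Intersecting the two equivalences gives the claim for $S\cap T$.

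Vertical periodicity is identical, using part~\ref{it:CP3} with $b+\nu\geq n$. Combining the two gives $\nu$-periodicity from $n-\nu$ in the sense of Definition~\ref{defn:PeriodicSets}.

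The only points needing care are bookkeeping. First, the Cut and Paste Lemma is stated for rational subsets of $\FIM_1$, where $\PE$ includes $(0,0,0)$. This element lies in neither $S$ nor $T$, and the relevant triples have $a+\nu\geq 1$ or $b+\nu\geq1$, so they never coincide with $(0,0,0)$. Second, we must make sure the same $n,\nu$ serve $S^{-1}$ and $T^{-1}$, which is exactly Remark~\ref{rmk:SilvaThm} together with Lemma~\ref{lem:CutPasteCor}. The two-sidedness hypothesis is not used, beyond ensuring that the notion of periodicity for $S\cap T$ is the two-sided one defined earlier. I expect no real obstacle: the statement is essentially Remark~\ref{rem:salient} applied to an intersection.
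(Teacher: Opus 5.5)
Your proof is correct and follows the paper's route. The paper simply says that $S$ and $T$ are each $\nu$-periodic from $n-\nu$, which is parts \ref{it:CP1} and \ref{it:CP3} of the Cut and Paste Lemma for $S$, $S^{-1}$, $T$, $T^{-1}$ with common parameters (cf.\ Remark~\ref{rem:salient}), and that this periodicity passes to the intersection; you have written out exactly these two steps, with the bookkeeping handled correctly.
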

\begin{proof}
This is obvious since both $S$ and $T$ are periodic from $n-\nu$ with period $\nu$.
\end{proof}

\begin{lem}
\label{lem:FGIP2sideIntersect1}
The intersection $S\cap T$ is finitely generated if and only if it satisfies Theorem~\ref{thm:2side}\ref{it:2side3}, that is, $a_{S\cap T}^\ast \leq \min(v_{\PE(S\cap T)}, v_{\PE(S^{-1}\cap T^{-1})}) \text{ and } b_{S\cap T}^\ast \leq \min(h_{\PE(S\cap T)}, h_{\PE(S^{-1}\cap T^{-1})})$.
\end{lem}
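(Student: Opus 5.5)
By Theorem~\ref{thm:2side}, applied to the two-sided subsemigroup $S\cap T$, it suffices to show that conditions \ref{it:2side1} and \ref{it:2side2} of that theorem hold automatically for $S\cap T$. Once this is done, finite generation of $S\cap T$ is equivalent to condition \ref{it:2side3}, which is exactly the inequality in the statement. Note that $(S\cap T)^{-1}=S^{-1}\cap T^{-1}$, so $\PE((S\cap T)^{-1})=\PE(S^{-1}\cap T^{-1})$. Hence \ref{it:2side3} for $S\cap T$ reads literally as displayed.

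For \ref{it:2side1}, I will first check that $\inv(S\cap T)=\inv(S)\cap\inv(T)$. Indeed, $(S\cap T)\cap(S\cap T)^{-1}=(S\cap S^{-1})\cap(T\cap T^{-1})$.

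Both $S$ and $T$ are finitely generated, so Lemma~\ref{lem:subFI1PIG} shows that $\inv(S)$ and $\inv(T)$ are finitely generated. They are inverse subsemigroups of $\FI_1$, and Proposition~\ref{prop:invplaingen} makes them finitely generated as inverse semigroups. Theorem~\ref{thm:JonesTrotter} then shows that their intersection is finitely generated as an inverse semigroup. Applying Proposition~\ref{prop:invplaingen} once more gives that it is finitely generated as a semigroup.

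One degenerate point needs care: $\inv(S)\cap\inv(T)$ could a priori be empty, which is fine by our convention. In fact it is nonempty here. Since $S\cap T$ is two-sided, Lemma~\ref{lem:SInvPhi} gives $\inv(S\cap T)\neq\emptyset$.

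For \ref{it:2side2}, Lemma~\ref{lem:FGIP2SideIntersectPeriod} already states that $S\cap T$ is periodic from $n-\nu$ with period $\nu$. The only point to justify is that this means both $\PE(S\cap T)$ and $\PE((S\cap T)^{-1})$ are periodic in the sense of Definition~\ref{defn:PeriodicSets}. This follows because Lemma~\ref{la:SilvaThm} holds for both $S$ and $S^{-1}$, and likewise for $T$ and $T^{-1}$, by Lemma~\ref{lem:CutPasteFGSubsem}. Parts \ref{it:CP1} and \ref{it:CP3} of the Cut and Paste Lemma give horizontal and vertical periodicity of $\PE(S)$ and $\PE(T)$ from $n-\nu$ with period $\nu$, elementwise, and the same for $S^{-1}$ and $T^{-1}$. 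Periodicity is elementwise rather than only at the level of $\mu$-images, so it passes to intersections.

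If Theorem~\ref{thm:2side} requires $n-\nu\in\mathbb{N}$, we have it because $n>\nu$. If a larger starting point is wanted, Remark~\ref{rmk:PeriodicSets} allows it. The main subtlety I expect is exactly this elementwise-versus-$\mu$-image distinction: periodicity of $S$ and of $T$ only in terms of $\mu$-images would not obviously transfer to $S\cap T$. This is why I will invoke the Cut and Paste form, which is a statement about membership of individual elements.
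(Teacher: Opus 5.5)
Your proposal is correct and follows the paper's proof. Condition (ii) comes from Lemma~\ref{lem:FGIP2SideIntersectPeriod}, and condition (i) from $\inv(S\cap T)=\inv(S)\cap\inv(T)$ together with Theorem~\ref{thm:JonesTrotter}; you cite Lemma~\ref{lem:subFI1PIG} for finite generation of $\inv(S)$ and $\inv(T)$, which is exactly what the paper's appeal to Theorem~\ref{thm:2side} rests on. Your concern about $\mu$-images versus elementwise periodicity is harmless but unnecessary: $\mu$ is injective on each $\PE_{a,b}$, so the periodicity of Definition~\ref{defn:PeriodicSets} is already an elementwise condition and passes directly to intersections.
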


\begin{proof}
The result follows from Theorem~\ref{thm:2side} upon verifying that the conditions \ref{it:2side1} and \ref{it:2side2} are satisfied. Condition \ref{it:2side2} is established by Lemma~\ref{lem:FGIP2SideIntersectPeriod}. For \ref{it:2side1}, Theorem~\ref{thm:2side} applied to $S$ and $T$ implies that both $\inv(S)$ and $\inv(T)$ are finitely generated. Since $\inv(S\cap T) = \inv(S)\cap \inv(T)$, the inverse semigroup Howson property of $\FI_1$ (Theorem~\ref{thm:JonesTrotter}) implies that $\inv(S\cap T)$ is finitely generated, as required.
\end{proof}

In order to determine whether 
$S\cap T$ satisfies the condition of Lemma~\ref{lem:FGIP2sideIntersect1},
we need to be able to compute the numbers $h_{\PE(S\cap T)}, h_{\PE(S^{-1}\cap T^{-1})}, v_{\PE(S\cap T)}, v_{\PE(S^{-1}\cap T^{-1})}$; see Definition \ref {defn:HorizonVertNumbers}.
Observe that none of these numbers are zero (Lemmas~\ref{lem:NonZeroNums} and \ref{lem:FGIP2SideIntersectPeriod}).

\begin{lem}
\label{lem:FGIP2sideIntersect2}
All of the following sets are non-empty: 
\[
S\cap T\cap \P(\V_{n,\nu}^1),\quad\; S\cap T\cap\P(\V_{n,\nu}^3),\quad\; S^{-1}\cap T^{-1}\cap \P(\V_{n,\nu}^1),\quad\; S^{-1}\cap T^{-1}\cap \P(\V_{n,\nu}^3).
\]
\end{lem}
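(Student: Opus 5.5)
We need to show, for instance, that $S\cap T\cap \P(\V^3_{n,\nu})\neq\emptyset$. The plan is to produce a positive element of $S\cap T$ with large $\rho-\mu$ value, i.e.\ lying in some $\P_{a,b}$ with $b\geq n$, and then cut it down with the Cut and Paste Lemma. The other three cases are handled the same way: swap the roles of $a$ and $b$ for $\V^1$, and pass to $S^{-1}\cap T^{-1}$ for the inverted versions.

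\emph{Step 1: a positive element of $S\cap T$ with large $b$.} Since $S\cap T$ is two-sided, Lemma~\ref{lem:SInvPhi} gives a non-idempotent $u\in\inv(S\cap T)$. Replacing $u$ by $u^{-1}$ if necessary, we may assume $u$ is positive, so $u^{-1}\in S\cap T$ as well. As in the proof of Lemma~\ref{lem:NonZeroNums}, consider $w_i:=u^iu^{-i}u$ for $i\in\mathbb{N}$. Each $w_i$ lies in $S\cap T$, since it is a product of elements of $S\cap T$. It is positive with $w_i\mu=u\mu$, and $w_i\rho-w_i\mu$ grows without bound in $i$. Concretely, writing $u=(-c,x,x+d)$ with $x>0$, we get $w_i=(-c,x,\,ix+d)$, which lies in $\P_{c,\,(i-1)x+d}$. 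Choose $i$ with $(i-1)x+d\geq n$. Alternatively, Lemma~\ref{lem:NonZeroNums} applied via Lemma~\ref{lem:FGIP2SideIntersectPeriod} directly gives a positive element of $S\cap T$ in some $\P_{a,b}$ with $b\geq n-\nu$. To guarantee $b\geq n$, I would use the explicit $w_i$ instead.

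\emph{Step 2: cut down.} Let $w=(-a,y,y+b)\in S\cap T$ with $y>0$ and $b\geq n$. Since $n,\nu$ satisfy Lemma~\ref{la:SilvaThm} for both $S$ and $T$, Remark~\ref{rmk:SilvaThm} produces $w'\in S\cap T\cap\V_{n,\nu}$. This element is obtained by applying \ref{it:CP1}--\ref{it:CP3} of the Cut and Paste Lemma the same numbers of times in both $S$ and $T$; the applications are determined by the coordinates of $w$ alone, so they are simultaneous. Because $b\geq n$, \ref{it:CP3} is applied at least once, so $w'\in\V^3_{n,\nu}$.

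\emph{Step 3: positivity.} This is the one point needing care, namely that the cutting does not turn $w'$ into an idempotent. If \ref{it:CP2} is not applied, then $w'\mu=y>0$. If it is applied, its applications stop at a value of $x$ in $[n-\nu,n-1]$, and this value is positive because $n>\nu$. In either case $w'\in\P(\V^3_{n,\nu})$. For $\V^1$, use $u^{-i}u^iu=(-(c+(i-1)x),\,x,\,x+d)$ instead, which has large $a$, and \ref{it:CP1} then lands in $\V^1_{n,\nu}$. For $S^{-1}\cap T^{-1}$, apply the same argument to $u^{-1}\in\inv(S\cap T)$, using that Lemma~\ref{la:SilvaThm} holds for $S^{-1}$ and $T^{-1}$ with the same parameters (Lemma~\ref{lem:CutPasteFGSubsem}).

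The main obstacle is bookkeeping rather than substance. One must ensure the cutting preserves membership in both $S$ and $T$ simultaneously, which holds because the common $n,\nu$ make each cut an ``if and only if'' in both semigroups. One must also check that the reduced element stays strictly positive, which is Step 3.
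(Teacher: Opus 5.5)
Your proposal is correct and is essentially the paper's argument. The paper starts from an element of $\PE(S\cap T)\cap\P_{a,b}$ with $a\geq n-\nu$, which exists because $h_{\PE(S\cap T)}\neq 0$ (Lemmas~\ref{lem:NonZeroNums} and~\ref{lem:FGIP2SideIntersectPeriod}); the proof of Lemma~\ref{lem:NonZeroNums} is exactly your $u^iu^{-i}u$ construction. It then pushes this element into $\V_{n,\nu}$ via Lemma~\ref{la:SilvaThm} and checks that positivity survives. You instead inline that construction to get a coordinate $\geq n$, which avoids the separate case $u\in\W_n$. One harmless slip: $u^{-i}u^iu=(-(c+ix),x,x+d)$, not $(-(c+(i-1)x),x,x+d)$, and this still has arbitrarily large $\lambda$-value.
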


\begin{proof}
We show $S\cap T\cap \P(\V_{n,\nu}^1) \neq \emptyset$; 
the proofs for the other sets
are similar. 
We have already mentioned that $h_{\PE(S\cap T)}\neq 0$, which means 
$\bigcup\{\PE(S\cap T)\cap \P_{a,b} : a\geq n-\nu, b\geq 0\}\neq \emptyset$
(Definition \ref{defn:HorizonVertNumbers}).
Let $u$ be an arbitrary element of this set. Again, by Lemma~\ref{la:SilvaThm} \ref{it:SilvaThm2}, either $u\in \W_{n}$, in which case the result is immediate, or there is $u'\in S\cap T\cap \V_{n,\nu}$ obtained from applying the Cut and Paste Lemma to both $S$ and $T$. A routine argument then shows $u'\in \P(\V_{n,\nu}^1)$.
\end{proof}

The next result shows that to find the numbers $h_{\PE(S\cap T)} $ and $v_{\PE(S\cap T)}$ it is sufficient to inspect the intersections of $S\cap T$ with $\P(\V_{n, \nu}^1)$ and $ \P(\V_{n,\nu}^3)$ respectively.

\begin{lem}
\label{lem:FGIPhvPositive}
We have
\begin{align*}
    h_{\PE(S\cap T)} &= \min\bigl\{x+b : (-a, x, x+b) \in S\cap T\cap \P(\V_{n, \nu}^1)\bigr\},\\
    v_{\PE(S\cap T)} &= \min\bigl\{a+x : (-a, x, x+b)\in S\cap T\cap \P(\V_{n,\nu}^3)\bigr\}.
\end{align*}
\end{lem}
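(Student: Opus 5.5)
We prove the formula for $h_{\PE(S\cap T)}$ by two inequalities; the one for $v_{\PE(S\cap T)}$ is symmetric, with \ref{it:CP3} in place of \ref{it:CP1} and $\V_{n,\nu}^3$ in place of $\V_{n,\nu}^1$. By Lemma~\ref{lem:FGIP2SideIntersectPeriod}, $S\cap T$ is $\nu$-periodic from $n-\nu$, so we take $n-\nu$ as the starting point in Definition~\ref{defn:HorizonVertNumbers}. Then
\[
U_H=\bigcup\bigl\{(S\cap T)\cap \P_{a,b}: a\geq n-\nu,\ b\geq 0\bigr\}.
\]
This set is non-empty by Lemma~\ref{lem:NonZeroNums}. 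Also $S\cap T\cap \P(\V_{n,\nu}^1)\neq\emptyset$ by Lemma~\ref{lem:FGIP2sideIntersect2}, so both minima are over non-empty sets.

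For ($\leq$): every $(-a,x,x+b)\in S\cap T\cap\P(\V_{n,\nu}^1)$ has $x>0$ and $a\geq n-\nu$, so it lies in $\P_{a,b}$ and hence in $U_H$. So the minimum over $U_H$ is at most the minimum over this smaller set.

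For ($\geq$), the main step is to take any $u=(-a,x,x+b)\in U_H$ and produce $u'=(-a',x',x'+b')\in S\cap T\cap \P(\V^1_{n,\nu})$ with $x'+b'\leq x+b$.
\begin{itemize}
\item If $u\in\W_n$, then $n-\nu\leq a<n$, so $u\in\V^1_{n,\nu}$ already, and it is positive; take $u'=u$.
\item Otherwise, apply Lemma~\ref{la:SilvaThm}\ref{it:SilvaThm2} simultaneously in $S$ and $T$ (the parameters $n,\nu$ are common), as in the proof of Lemma~\ref{lem:FGIPDet2side}. This gives $u'\in S\cap T\cap\V_{n,\nu}$, obtained from $u$ by applications of \ref{it:CP1}, \ref{it:CP2}, \ref{it:CP3}.
\end{itemize}

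The second case is the main obstacle: we must check both that $u'$ lands in $\P(\V^1_{n,\nu})$ and that $x+b$ does not increase. Each of the three moves only decreases its own coordinate by $\nu$, and only while that coordinate is $\geq n$; it leaves the other coordinates unchanged. Hence $x'\leq x$ and $b'\leq b$, which gives $x'+b'\leq x+b$.

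For positivity: if \ref{it:CP2} is never applied, then $x'=x>0$. If it is applied, it stops with $n-\nu\leq x'<n$, and $x'\geq n-\nu\geq 1$ because $n>\nu$. Either way $u'$ is positive.

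For membership in $\V^1_{n,\nu}$: the first coordinate $a'$ either equals $a$ with $n-\nu\leq a<n$, or comes from \ref{it:CP1} reducing $a$ into $[n-\nu,n-1]$ (Remark~\ref{rmk:SilvaThm}). In both cases $n-\nu\leq a'<n$, and $u'\in\W_n$, so $u'\in\V^1_{n,\nu}$.

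One care point is that this argument needs the cut-down element to remain in both $S$ and $T$ at each step. This holds because each Cut and Paste equivalence is an ``if and only if'' valid for $S$ and for $T$ with the same $n,\nu$. With that, every element of $U_H$ is dominated in $x+b$ by an element of $S\cap T\cap\P(\V^1_{n,\nu})$, which gives ($\geq$) and completes the argument.
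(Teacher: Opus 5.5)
Your proof is correct and follows the paper's own argument. You get one inequality from the inclusion of $S\cap T\cap \P(\V_{n,\nu}^1)$ in the union defining $h_{\PE(S\cap T)}$, and the other by cutting an arbitrary element of that union down, with the common parameters $n,\nu$, to an element of $S\cap T\cap \P(\V_{n,\nu}^1)$ whose $\rho$-value is no larger. The only differences are that the paper writes out the $v$ case and leaves $h$ as analogous, while you do the reverse, and that you spell out the positivity and $\V^1_{n,\nu}$-membership checks which the paper calls routine.
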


\begin{proof}
We prove the second assertion, and the first can be done analogously. Let $v^+ := \min\{a+x : (-a, x, x+b)\in S\cap T\cap \P(\V_{n,\nu}^3)\}$; this is well-defined by Lemma~\ref{lem:FGIP2sideIntersect2}. 

Note that 
\[S\cap T\cap \P(\V_{n,\nu}^3) \subseteq \bigcup\{\PE(S\cap T)\cap \P_{a,b} : a\geq 0, b\geq n-\nu\}.
\]
Thus, $v_{\PE(S\cap T)}\leq v^+$.

Let $u \in \bigcup\{\PE(S\cap T)\cap \P_{a,b} : a\geq 0, b\geq n-\nu\}
$. By Lemma~\ref{la:SilvaThm} \ref{it:SilvaThm2}, either $u\in \W_{n}$, in which case $u\in S\cap T\cap \P(\V_{n,\nu}^3)$ and $v^+\leq u\lambda + u\mu$, or there is $u'\in S\cap T\cap \V_{n,\nu}$ that is obtained from $u$ by applying the Cut and Paste Lemma to both $S$ and $T$. It is routine to show that $u'\in \P(\V_{n,\nu}^3)$. Note that $u'\lambda\leq u\lambda$ and $u'\mu\leq u\mu$. Thus, $v^+\leq u\lambda + u\mu$. Since $u$ is arbitrary, this implies $v^+\leq v_{\PE(S\cap T)}$. Thus $v_{\PE(S\cap T)}=v^+$, as desired.
\end{proof}

The following is an obvious dual of Lemma~\ref{lem:FGIPhvPositive}, and
we omit the proof.

\begin{lem}
\label{lem:FGIPhvNegative}
We have
\begin{align*}
    h_{\PE(S^{-1}\cap T^{-1})} &= \min\bigl\{x+b : (-a, x, x+b)\in S^{-1}\cap T^{-1}\cap \P(\V_{n,\nu}^1)\bigr\};\\
    v_{\PE(S^{-1}\cap T^{-1})} &= \min\bigl\{a+x : (-a, x, x+b)\in S^{-1}\cap T^{-1}\cap \P(\V_{n,\nu}^3)\bigr\}.
    \tag*{\qed}
\end{align*}
\end{lem}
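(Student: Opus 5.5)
The plan is to deduce Lemma~\ref{lem:FGIPhvNegative} from Lemma~\ref{lem:FGIPhvPositive} by passing to inverses. Put $S' := S^{-1}$ and $T' := T^{-1}$. Each is a finitely generated subsemigroup of $\FI_1$, generated by the inverses of the generators of $S$ and $T$. Moreover $S'\cap T' = (S\cap T)^{-1}$, so $S'\cap T'$ is two-sided exactly when $S\cap T$ is. We also have $\PE(S'\cap T') = \PE(S^{-1}\cap T^{-1})$. The two claimed identities are therefore literally the statement of Lemma~\ref{lem:FGIPhvPositive} for the pair $(S',T')$, with the same $n$ and $\nu$.

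The only thing to check is that the standing hypotheses of Lemma~\ref{lem:FGIPhvPositive} hold for $(S',T')$ with the same parameters. That is, Lemma~\ref{la:SilvaThm} must hold for $S'$, $T'$ and their inverses $S$, $T$ with $n>\nu\geq 1$. This is exactly the convention fixed after Lemma~\ref{lem:CutPasteFGSubsem}: the parameters are chosen so that Lemma~\ref{la:SilvaThm} holds simultaneously for $S,S^{-1}$ and for $T,T^{-1}$. Hence the hypothesis is symmetric under inversion.

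Consequently, every auxiliary lemma invoked in the proof of Lemma~\ref{lem:FGIPhvPositive} transfers to the inverted pair. Lemma~\ref{lem:FGIP2SideIntersectPeriod} gives periodicity of $S'\cap T'$ from $n-\nu$ with period $\nu$. Lemma~\ref{lem:FGIP2sideIntersect2}, which already lists the sets $S^{-1}\cap T^{-1}\cap \P(\V_{n,\nu}^1)$ and $S^{-1}\cap T^{-1}\cap\P(\V_{n,\nu}^3)$, gives that the minima are over non-empty sets. Lemma~\ref{lem:NonZeroNums} gives that the $h$ and $v$ numbers are non-zero.

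If one prefers a direct argument over this citation, repeat the two-inequality proof of Lemma~\ref{lem:FGIPhvPositive} with $S^{-1}\cap T^{-1}$ in place of $S\cap T$. For the second identity, write $v^-$ for the right-hand side. The containment $S^{-1}\cap T^{-1}\cap\P(\V^3_{n,\nu})\subseteq\bigcup\{\PE(S^{-1}\cap T^{-1})\cap\P_{a,b}: b\geq n-\nu\}$ gives $v_{\PE(S^{-1}\cap T^{-1})}\leq v^-$.

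For the reverse inequality, take $u$ in that union. Apply Lemma~\ref{la:SilvaThm}\ref{it:SilvaThm2} to $S^{-1}$ and $T^{-1}$ to reduce $u$ to some $u'\in\P(\V^3_{n,\nu})$. Cut and Paste does not increase $\lambda$ or $\mu$, so $u'\lambda+u'\mu\leq u\lambda+u\mu$.

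The point needing the most care is the routine step that $u'$ lands in $\P(\V^3_{n,\nu})$ rather than elsewhere in $\V_{n,\nu}$. This follows from Remark~\ref{rmk:SilvaThm}. If $u\rho-u\mu\geq n$, then \ref{it:CP3} is applied at least once, which puts $u'$ in $\V^3_{n,\nu}$. Otherwise $u\rho-u\mu\in[n-\nu,n-1]$ already, and \ref{it:CP3} is not used. Positivity of $u'$ is preserved because \ref{it:CP2} stops at $\mu\geq n-\nu\geq 1$ when it is applied, and leaves $\mu$ unchanged when it is not. The first identity, for $h$, is handled in the same way using \ref{it:CP1} and $\V^1_{n,\nu}$.
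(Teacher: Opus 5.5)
Your proposal is correct and is essentially the paper's approach: the paper omits this proof as the obvious dual of Lemma~\ref{lem:FGIPhvPositive}. Your reduction applies that lemma to the pair $(S^{-1},T^{-1})$, using the convention that $n,\nu$ serve for $S$, $S^{-1}$, $T$ and $T^{-1}$ simultaneously, which makes that duality explicit. Your direct two-inequality argument is also sound.
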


The final three lemmas are concerned with the values of 
$h_{\PE(S\cap T)},\ h_{\PE(S^{-1}\cap T^{-1})}, \ v_{\PE(S\cap T)}$, \ $v_{\PE(S^{-1}\cap T^{-1})}$
in relation to $a_{S\cap T}^\ast$ and $b_{S\cap T}^\ast$. 

\begin{lem}
\label{lem:InfManyIdems}
Let $U$ be a subsemigroup of $\FI_1$. Suppose $e\in E(U)\setminus\PIG(U)$. Then, there are infinitely many idempotents $f_1, f_2\in E(U)\setminus\PIG(U)$ such that $f_1\lambda = e\lambda$ and $f_2\rho = e\rho$.
\end{lem}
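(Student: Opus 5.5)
By Lemma~\ref{lem:EuniPIGGreen}, applicable because $\FI_1$ is $E$-unitary, the hypothesis $e\in E(U)\setminus\PIG(U)$ gives a non-idempotent $s\in\inv(U)$ with $e\mathscr{R}s$, so $e=ss^{-1}$. Replacing $s$ by $s^{-1}$ if necessary, we may assume $s$ is positive. Write $e=(-a,0,b)$. Since $s\mathscr{R}e$, we have $s=(-a,x,b)$ with $0<x\le b$, and $s^{-1}=(-(a+x),-x,b-x)$. The plan is to produce, from $s$ and $s^{-1}$, two explicit infinite families of idempotents in $\inv(U)$: one with $\lambda$-value $a$, one with $\rho$-value $b$. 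We then check that each member is again non-purely-idempotent-generated.

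For $f_1$ (fixed $\lambda=a$), take $f_{1,k}:=s^k s^{-k}$ for $k\in\mathbb{N}$. By the multiplication formula, $s^k=(-a,kx,b+(k-1)x)$ and $s^{-k}=(s^k)^{-1}$, so $s^ks^{-k}=(-a,0,b+(k-1)x)$. These idempotents all lie in $\inv(U)\subseteq U$, have $\lambda$-value $a$, and have strictly increasing $\rho$-values because $x>0$; hence they are pairwise distinct. Each is of the form $tt^{-1}$ with $t=s^k$ a non-idempotent of $\inv(U)$. By the backward direction of Lemma~\ref{lem:EuniPIGGreen}, each $f_{1,k}$ therefore lies in $E(U)\setminus\PIG(U)$.

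For $f_2$ (fixed $\rho=b$), use negative powers on the left instead. Take $g_k:=e\,s^{-k}s^{k}\,e$, or more simply $s^{-k}s^k$ conjugated back so that the left end matches $e$. A cleaner route is to compute $s^{-k}s^k=(-(a+x),0,b-x)$ shifted, which is not directly what we want. Instead consider $t_k:=s\,s^{-k}$. We have $s^{-k}=(-(a+kx),-kx,b-x)$, so $t_k=(-\max(a,a+kx-x),\,x-kx,\,\max(b,b-x+x))=(-(a+(k-1)x),-(k-1)x,b)$. For $k\ge2$, $t_k$ is non-idempotent and lies in $\inv(U)$, and $t_kt_k^{-1}=(-(a+(k-1)x),0,b)$. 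These idempotents have $\rho$-value $b$ and distinct $\lambda$-values. Again, Lemma~\ref{lem:EuniPIGGreen} places each in $E(U)\setminus\PIG(U)$.

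The only delicate points are the routine triple computations (checking the max terms) and remembering to reduce to positive $s$ by symmetry. In the case where $s$ is negative, the roles swap: $s^ks^{-k}$ then fixes $\rho$, and $s\,s^{-k}$ fixes $\lambda$. In either case both families are obtained, so no substantial obstacle is expected.
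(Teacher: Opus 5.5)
Your argument is correct and is essentially the paper's: the paper uses the families $u^iu^{-i}$ and $(eu^{-1})^i(eu^{-1})^{-i}$, and your $t_k=ss^{-k}$ equals $es^{-(k-1)}=(es^{-1})^{k-1}$ (since $s^{-1}e=s^{-1}$), so the idempotents coincide. One slip to fix: replacing $s$ by $s^{-1}$ does not preserve $e=ss^{-1}$, so the reduction to positive $s$ should instead rest either on the automorphism $(-a,x,b)\mapsto(-b,-x,a)$ of $\FI_1$ (which swaps $\lambda$ and $\rho$) or on your final paragraph's direct treatment of negative $s$, which is correct.
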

\begin{proof}
By Lemma~\ref{lem:EuniPIGGreen}, there is $u\in \inv(U)$ such that $uu^{-1} = e$. Without loss of generality, assume $u$ is positive. Then, $(u^iu^{-i})\lambda = e\lambda$ and $((eu^{-1})^i(eu^{-1})^{-i})\rho = e\rho$ for each $i\in \mathbb{N}$. 
\end{proof}

\begin{lem}
\label{lem:FGIPast1}
The following are equivalent:
\begin{thmenumerate}
    \item \label{it:FGIPast-1} $a_{S\cap T}^\ast \leq \min(v_{\PE(S\cap T)},\, v_{\PE(S^{-1}\cap T^{-1})})$;
    \item \label{it:FGIPast-2} there is an idempotent $e\in E(S\cap T)\setminus\PIG(S\cap T)$ such that 
    \[
    e\lambda \leq \min(v_{\PE(S\cap T)},\, v_{\PE(S^{-1}\cap T^{-1})});
    \]
    \item \label{it:FGIPast-3} there is an idempotent $f_1\in S\cap T\cap \V_{n,\nu}^3$ such that 
    \[
    f_1\lambda \leq \min(v_{\PE(S\cap T)},\, v_{\PE(S^{-1}\cap T^{-1})}).
    \]
\end{thmenumerate}
\end{lem}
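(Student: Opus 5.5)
The plan is to prove \ref{it:FGIPast-1}$\Leftrightarrow$\ref{it:FGIPast-2} directly from the definition of $a^\ast$, and \ref{it:FGIPast-2}$\Leftrightarrow$\ref{it:FGIPast-3} by moving idempotents up and down along the vertical direction, using the Cut and Paste Lemma simultaneously for $S$ and $T$. Throughout, write $m:=\min(v_{\PE(S\cap T)}, v_{\PE(S^{-1}\cap T^{-1})})$. As in the surrounding lemmas, $S\cap T$ is two-sided and $n>\nu\geq 1$ are parameters for which Lemma~\ref{la:SilvaThm} holds for $S$, $S^{-1}$, $T$ and $T^{-1}$.

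\emph{\ref{it:FGIPast-1}$\Leftrightarrow$\ref{it:FGIPast-2}.} This is immediate from the remark following the definition of $a^\ast$. That remark shows $a^\ast_{S\cap T}$ is the least $\lambda$-value of idempotents in $E(S\cap T)\setminus\PIG(S\cap T)$. This set is non-empty because $\inv(S\cap T)$ contains a non-idempotent (Lemma~\ref{lem:SInvPhi}). Hence $a^\ast_{S\cap T}\leq m$ holds exactly when some such idempotent $e$ satisfies $e\lambda\leq m$.

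\emph{\ref{it:FGIPast-2}$\Rightarrow$\ref{it:FGIPast-3}.} Let $e$ be as in \ref{it:FGIPast-2}. By Lemma~\ref{lem:InfManyIdems} applied to $U=S\cap T$, there are infinitely many idempotents in $E(S\cap T)$ with $\lambda$-value $e\lambda$. They are distinct, so their $\rho$-values are unbounded, and we can choose $f=(-e\lambda,0,b)\in S\cap T$ with $b\geq n$.
\begin{itemize}
\item Since $f$ is an idempotent, we have $x=0<n$. We apply \ref{it:CP3} of the Cut and Paste Lemma (for both $S$ and $T$) repeatedly until $n-\nu\leq b'<n$.
\item If also $e\lambda\geq n$, we apply \ref{it:CP1} until the first coordinate lies in $[n-\nu,n)$.
\end{itemize}
Both operations preserve membership in $S\cap T$ and preserve being idempotent. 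Moreover, they can only decrease $\lambda$. The result is an idempotent $f_1\in S\cap T\cap\V^3_{n,\nu}$ with $f_1\lambda\leq e\lambda\leq m$.

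\emph{\ref{it:FGIPast-3}$\Rightarrow$\ref{it:FGIPast-2}.} Let $f_1=(-a,0,b)$ with $n-\nu\leq b<n$ and $a\leq m$. Applying \ref{it:CP3} in reverse (for both $S$ and $T$) gives $(-a,0,b+k\nu)\in S\cap T$ for all $k\in\mathbb{N}_0$. It remains to show that one of these lies outside $\PIG(S\cap T)$. For this I will use finiteness of $\PIG(S\cap T)$. By Lemma~\ref{lem:FGIP2sideIntersect1}'s proof, $\inv(S\cap T)$ is finitely generated. Hence $\PIG(\inv(S\cap T))$ is finite by Proposition~\ref{prop:JonesTrotterPIG}, and this set equals $\PIG(S\cap T)$ by Lemma~\ref{lem:EuniPlainSamePIG}. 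So for $k$ large, $e:=(-a,0,b+k\nu)\in E(S\cap T)\setminus\PIG(S\cap T)$ and $e\lambda=a\leq m$.

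The only delicate points are ensuring the Cut and Paste moves are applied simultaneously to $S$ and $T$ (which is why common parameters were fixed) and noticing that the non-PIG requirement in \ref{it:FGIPast-2} is recovered via finiteness of $\PIG$. I expect the latter to be the main obstacle. If finiteness of $\PIG$ fails to be available, I would instead use Lemma~\ref{lem:EuniPIGGreen} with $\beta_{S\cap T}$-type elements from Proposition~\ref{prop:OliveiraSilva} to witness non-PIG-ness for large $\rho$.
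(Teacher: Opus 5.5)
Your proposal is correct and follows essentially the same route as the paper. You get (i)$\Leftrightarrow$(ii) from $a^\ast_{S\cap T}$ being the least $\lambda$-value in $E(S\cap T)\setminus\PIG(S\cap T)$; (ii)$\Rightarrow$(iii) by using Lemma~\ref{lem:InfManyIdems} to arrange $\rho\geq n$ and then cutting down with the Cut and Paste Lemma simultaneously in $S$ and $T$; and (iii)$\Rightarrow$(ii) by pumping the $\rho$-coordinate and invoking finiteness of $\PIG(S\cap T)$ via Lemmas~\ref{lem:FGIP2sideIntersect1}, \ref{lem:EuniPlainSamePIG} and Proposition~\ref{prop:JonesTrotterPIG}. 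The differences from the paper are cosmetic: you carry out the Cut and Paste reductions by hand instead of citing Lemma~\ref{la:SilvaThm}\ref{it:SilvaThm2}, and you state explicitly that $E(S\cap T)\setminus\PIG(S\cap T)$ is non-empty, which the paper leaves implicit.
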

\begin{proof}
We have already observed that $a_{S\cap T}^\ast$ is the least $\lambda$-value of triples in $E(S\cap T)\setminus \PIG(S\cap T)$; this shows the equivalence between \ref{it:FGIPast-1} and \ref{it:FGIPast-2}.

\noindent \ref{it:FGIPast-2} $\Rightarrow$ \ref{it:FGIPast-3}. 
Suppose $e \in E(S\cap T)\setminus\PIG(S)$ as in \ref{it:FGIPast-2}.
We may assume $e\rho \geq n$ from Lemma~\ref{lem:InfManyIdems}.
Once again, Lemma~\ref{la:SilvaThm} \ref{it:SilvaThm2} yields an idempotent $f_1\in S\cap T\cap \V_{n,\nu}$ that is obtained from $e$ by applying \ref{it:CP1} and \ref{it:CP3} of the Cut and Paste Lemma to both $S$ and $T$. Note that \ref{it:CP3} is applied at least once since $e\rho \geq n$, and so $f_1\in \V_{n,\nu}^3$ (Remark~\ref{rmk:SilvaThm}). It is clear that $f_1\lambda \leq e\lambda$.

\noindent \ref{it:FGIPast-3} $\Rightarrow$ \ref{it:FGIPast-2}. Suppose $f_1 = (-a_1, 0, b_1)$ is an idempotent as in
\ref{it:FGIPast-3}. We apply \ref{it:CP3} of the Cut and Paste Lemma infinitely many times to both $S$ and $T$ and yield $(-a_1, 0, b_1+l\nu)\in S\cap T$ for each $l\in\mathbb{N}$. Since $\PIG(S\cap T)$ is finite by Lemmas~\ref{lem:FGIP2sideIntersect1}, \ref{lem:EuniPlainSamePIG} and Proposition~\ref{prop:JonesTrotterPIG}, the result follows.
\end{proof}

The following is analogous to Lemma~\ref{lem:FGIPast1}; we omit the proof. 

\begin{lem}
\label{lem:FGIPast2}
The following are equivalent: 
\begin{thmenumerate}
    \item \label{it:FGIPast2-1} $b_{S\cap T}^\ast \leq \min(h_{\PE(S\cap T)},\, h_{\PE(S^{-1}\cap T^{-1})})$;
    \item \label{it:FGIPast2-2} there is an idempotent $e\in E(S\cap T) \setminus\PIG(S\cap T)$ such that 
    \[
    e\rho \leq \min(h_{\PE(S\cap T)},\, h_{\PE(S^{-1}\cap T^{-1})});
    \]
    \item \label{it:FGIPast2-3} there is an idempotent $f_2 \in S\cap T\cap \V_{n,\nu}^1$ such that 
    \[
    f_2\rho \leq \min(h_{\PE(S\cap T)},\, h_{\PE(S^{-1}\cap T^{-1})}). \tag*{\qed}
    \]
\end{thmenumerate}
\end{lem}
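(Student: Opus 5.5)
The plan is to mirror the proof of Lemma~\ref{lem:FGIPast1}, exchanging the roles of $\lambda$ and $\rho$, of $a^\ast$ and $b^\ast$, of $v$ and $h$, and of parts \ref{it:CP1} and \ref{it:CP3} of the Cut and Paste Lemma.

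\emph{\ref{it:FGIPast2-1} $\Leftrightarrow$ \ref{it:FGIPast2-2}.} This is immediate from the observation made after the definition of $b^\ast$: $b_{S\cap T}^\ast$ is the least $\rho$-value of triples in $E(S\cap T)\setminus\PIG(S\cap T)$, and this minimum is attained.

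\emph{\ref{it:FGIPast2-2} $\Rightarrow$ \ref{it:FGIPast2-3}.} Take $e$ as in \ref{it:FGIPast2-2}. Lemma~\ref{lem:InfManyIdems} provides infinitely many idempotents $f\in E(S\cap T)\setminus\PIG(S\cap T)$ with $f\rho=e\rho$. Their $\lambda$-values must be unbounded, since there are only finitely many idempotents with a fixed pair of $\lambda$- and $\rho$-values. So we may replace $e$ by such an $f$ and assume $e\lambda\geq n$.

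Now apply Lemma~\ref{la:SilvaThm}\ref{it:SilvaThm2} simultaneously to $S$ and $T$, which is possible because $n,\nu$ serve both. This gives an idempotent $f_2\in S\cap T\cap \V_{n,\nu}$ obtained from $e$ by applications of \ref{it:CP1} and \ref{it:CP3}; part \ref{it:CP2} is never needed since $x=0$. Because $e\lambda\geq n$, part \ref{it:CP1} is used at least once, so $f_2\in\V_{n,\nu}^1$ by Remark~\ref{rmk:SilvaThm}. Applications of \ref{it:CP3} can only decrease $\rho$, so $f_2\rho\leq e\rho$, which gives the required bound.

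\emph{\ref{it:FGIPast2-3} $\Rightarrow$ \ref{it:FGIPast2-2}.} Write $f_2=(-a_2,0,b_2)$ with $n-\nu\leq a_2<n$. Applying \ref{it:CP1} in reverse (adding $\nu$ to $a$) in both $S$ and $T$ gives $(-(a_2+l\nu),0,b_2)\in S\cap T$ for every $l\in\mathbb{N}$. By Lemma~\ref{lem:FGIP2sideIntersect1}, $\inv(S\cap T)$ is finitely generated. Lemma~\ref{lem:EuniPlainSamePIG} and Proposition~\ref{prop:JonesTrotterPIG} then show that $\PIG(S\cap T)$ is finite. Hence some idempotent in this infinite family lies in $E(S\cap T)\setminus\PIG(S\cap T)$, and it has $\rho$-value $b_2=f_2\rho$, as required.

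The only point needing care is the reduction to $e\lambda\geq n$ in \ref{it:FGIPast2-2} $\Rightarrow$ \ref{it:FGIPast2-3}. Lemma~\ref{lem:InfManyIdems} as stated only gives infinitely many idempotents with the same $\rho$, so one must add the finiteness argument above to force $\lambda$ large. Alternatively, one can read this off its proof via the explicit idempotents $(eu^{-1})^i(eu^{-1})^{-i}$.
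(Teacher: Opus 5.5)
Your proposal is correct and is exactly the paper's intended argument: the paper omits this proof as analogous to Lemma~\ref{lem:FGIPast1}, and your write-up is precisely that proof with $\lambda\leftrightarrow\rho$, $v\leftrightarrow h$ and \ref{it:CP3}$\leftrightarrow$\ref{it:CP1} swapped. Your remark that the idempotents supplied by Lemma~\ref{lem:InfManyIdems} must have unbounded $\lambda$-values makes explicit the step the paper compresses into ``we may assume''.
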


We are now ready to prove the main result of this section:

\begin{proof}[Proof of Theorem~\ref{thmB:FGIP}]
Let $S = \langle X\rangle$ and $T = \langle Y \rangle$ where $X,Y\subseteq\FI_1$ are finite. We decide whether $S\cap T$ is finitely generated or not in the following steps. 

\noindent \textbf{Step 1:} \textit{Determine whether $S$  is one-sided or two-sided.} 
Compute the set $X\mu$. If it contains both positive and negative numbers, $S$ is two-sided; otherwise it is one-sided.

\noindent \textbf{Step 2:} \textit{Determine whether $T$  is one-sided or two-sided.} 
This is analogous to Step 1.

\noindent \textbf{Step 3:} \textit{First branching.} 
If at least one of $S$ or $T$ is one-sided,
then $S\cap T$ is finitely generated by Proposition~\ref{prop:FI1fgintersectpairs}, and the algorithm stops. Otherwise proceed to Step 4.

So, from this point onwards both $S$ and $T$ are two-sided.

\noindent\textbf{Step 4:} \textit{Determine whether $S\cap T$ is one-sided or two-sided.}
Compute natural numbers $n>\nu\geq 1$ such that Lemma~\ref{la:SilvaThm} holds for both $S$ and $T$. Now, we have the following sequence of equivalent statements:
\begin{align*}
    S\cap T \text{ is two-sided}& \Leftrightarrow \P(S)\cap \P(T)\neq\emptyset \text{ and } \N(S)\cap \N(T)\neq\emptyset \\
    &\Leftrightarrow \P(S)\cap \P(T)\neq\emptyset \text{ and } \P(S^{-1})\cap \P(T^{-1})\neq\emptyset\\
    &\Leftrightarrow S\cap T\cap \P(\W_{n})\neq\emptyset \text{ and } S^{-1}\cap T^{-1}\cap \P(\W_{n})\neq\emptyset && \text{(Lemma~\ref{lem:FGIPDet2side}).}
\end{align*}
The final statement is algorithmically decidable since $\P(\W_{n})$ is finite, $S, T, S^{-1}, T^{-1}$ are rational subsets of $\FIM_1$, and the rational subset membership problem for $\FIM_1$ is decidable (Theorem~\ref{thm:Membership}).

\noindent\textbf{Step 5:}
\textit{Second branching.}
If $S\cap T$ is one-sided, proceed to Step 6;  otherwise, proceed to Step 7.

\noindent\textbf{Step 6:} 
\textit{When $S\cap T$ is one-sided, decide whether or not it is finitely generated.}
Suppose $S\cap T\subseteq \PE$.
Then:
\begin{align*}
    &S\cap T \text{ is finitely generated}\\
    \Leftrightarrow\ & S\cap T \text{ has a non-empty intersection with finitely many $\PE_{a,b}$} && \text{(Theorem~\ref{thm:1side})}\\
    \Leftrightarrow\ & S\cap T\cap (\V_{n,\nu}^1\cup \V_{n,\nu}^3) = \emptyset && \text{(Lemma~\ref{lem:FGIP1sideorEmpty}).}
\end{align*}
Since $\V_{n,\nu}^1\cup \V_{n,\nu}^3$ is finite, the final statement can be algorithmically decided using the rational subset membership problem for $\FIM_1$.

\noindent\textbf{Step 7:} \textit{When $S\cap T$ is two-sided, decide whether or not it is finitely generated.}
By Lemma~\ref{lem:FGIPhvPositive}, 
\[
h_{\PE(S\cap T)} = \min\{x+b:(-a, x, x+b)\in S\cap T\cap \P(\V_{n,\nu}^1)\}.
\]
Since $\P(\V_{n,\nu}^1)$ is finite, we effectively compute $h_{\PE(S\cap T)}$ using the rational subset membership problem. Similarly, we determine $h_{\PE(S^{-1}\cap T^{-1})},$ $v_{\PE(S\cap T)}, v_{\PE(S^{-1}\cap T^{-1})}$ using Lemmas \ref{lem:FGIPhvPositive} and \ref{lem:FGIPhvNegative}.
Now, we have: 
\begin{align*}
    &S\cap T \text{ is finitely generated}\\
    \Leftrightarrow\ & a_{S\cap T}^\ast \leq \min(v_{\PE(S\cap T)}, v_{\PE(S^{-1}\cap T^{-1})}) \text{ and } b_{S\cap T}^\ast \leq \min(h_{\PE(S\cap T)}, h_{\PE(S^{-1}\cap T^{-1})})\\
    \Leftrightarrow\ & \text{there are idempotents $f_1\in S\cap T\cap \V_{n,\nu}^3$ and $f_2\in S\cap T \cap \V_{n,\nu}^1$ such that}\\
    &\quad\;\, f_1\lambda \leq \min(v_{\PE(S\cap T)}, v_{\PE(S^{-1}\cap T^{-1})}) \text{ and } f_2\rho \leq \min(h_{\PE(S\cap T)}, h_{\PE(S^{-1}\cap T^{-1})}).
\end{align*}
The first equivalence is due to Lemma~\ref{lem:FGIP2sideIntersect1}; the second is due to Lemmas~\ref{lem:FGIPast1} and \ref{lem:FGIPast2}. The final statement is
algorithmically decidable using the rational subset membership problem for $\FIM_1$ since both $\V_{n,\nu}^1$ and $\V_{n,\nu}^3$ are finite. This completes Step 7 and the proof.
\end{proof}

\section{Remarks and open problems}
\label{sec:RemarkOpen}

All results (including examples) in this paper hold if one replaces semigroups with monoids. 

Silva~\cite[Theorem 7.3]{SilvaRationalSubset} showed that a submonoid of the monogenic free inverse monoid $\FIM_1$ is rational if and only if it is finitely generated. By changing semigroups to monoids in our examples \ref{ex:HowsonCounter1} and \ref{ex:2sideIntersect}, we conclude that rational subsets of $\FIM_1$ are not closed under intersection. Thus, rational subsets are not closed under complementation; this is also shown in \cite[Example 5.5]{SilvaRationalSubset}.

We note that our proof of the decidability of FGIP for $\FI_1$ does not indicate its computational complexity. Also, the proof depends on other results, particularly the Cut and Paste Lemma and the rational subset membership problem for $\FIM_1$. It is known that the computational complexity of the rational subset membership problem for a free inverse semigroup is NP (\cite[Theorem 3]{DiekertLohreyMiller}), but the Cut and Paste Lemma does not directly provide complexity bounds for the computation of the parameters $n$ and $\nu$.

\begin{question}
What is the computational complexity of the FGIP for $\FI_1$?
\end{question}

When the intersection of two finitely generated subsemigroups of $\FI_1$ is again finitely generated, we ask the following question: 
\begin{question}
\label{qu:rank}
What is a sharp bound for the rank (i.e. a minimal number of generators) of $S\cap T$ in terms of ranks of $S$ and $T$
if $S, T, S\cap T$ are all finitely generated?
\end{question}

We remark that the FGIP can be defined for any algebra. In particular, we may consider the FGIP for free inverse semigroups as inverse semigroups. Of course, the FGIP for $\FI_1$ as an inverse semigroup is trivial due to the inverse semigroup Howson property (Theorem~\ref{thm:JonesTrotter}). However, as mentioned in the Introduction, a general free inverse semigroup $\FI_n$ of rank $n\geq 2$ does not have the inverse semigroup Howson property (\cite{JonesTrotter}), not to mention the semigroup Howson property.

\begin{question}
Is the FGIP for $\FI_n$ ($n\geq 2$) as an inverse semigroup decidable?
\end{question}

\begin{question}
Is the FGIP for $\FI_n$ ($n\geq 2$) as a semigroup decidable?
\end{question}

We also noted in the Introduction that free semigroups (monoids) of rank $\geq 2$ do not possess the Howson property.
Nonetheless, one can deduce that the FGIP is decidable from general theory of regular languages. Given two finitely generated subsemigroups $S$ and $T$ of a free semigroup (monoid), all of $S$, $T$, $S\cap T$ are regular. Every subsemigroup $U$ of a free semigroup (monoid) admits a unique base (i.e.\ minimal generating set) $U^+\setminus (U^+)^2$ where $U^+ := U\setminus\{1\}$. The base for $S\cap T$ is regular, and whether this base is finite or not is decidable. 
Although the decidability of the FGIP for free semigroups (monoids) can be quickly derived, the study of the rank of the intersection of two finitely generated subsemigroups (submonoids) of a free semigroup (monoid) is not trivial; see, for example, \cite{Karhumaki, GiambrunoRestivo, SinghKrishna} for special cases on this topic.

Free groups are other free objects for which the FGIP for semigroups or for monoids is relevant. Although the FGIP for free groups as groups is trivial due to the result by Howson \cite{Howson}, free groups can be regarded as semigroups or monoids. Notice that free semigroups (monoids) embed into free groups, and hence free groups do not have the (semigroup or monoid) Howson property.

\begin{question}\label{qu:FGIPFG}
Is the FGIP for a free group as a semigroup (monoid) decidable?
\end{question}

Another class of semigroups in which the (semgiroup) Howson property fails, and hence the FGIP is non-trivial, is the class of commutative semigroups. There are commutative semigroups with the Howson property, such as the natural numbers $\mathbb{N}$. However, the following simple example shows that commutative semigroups do not possess the Howson property in general. 

\begin{ex}
\label{ex:commnonHowson}
Consider the direct product $\mathbb{N}_0 \times \mathbb{N}_0\times M$ of monoids $\mathbb{N}_0$ with addition and $M := \{0, 1\} \subseteq \mathbb{N}_0$ with multiplication. Define two finitely generated subsemigroups as 
\[
S := \langle (0, 0, 0), (0, 1, 0), (1, 0, 0)\rangle, \quad T := \langle (0, 0, 1), (0, 1, 1), (1, 0, 1), (1, 1, 0)\rangle.
\]
Observe that $S = \mathbb{N}_0\times\mathbb{N}_0\times\{0\}$ and that $T = (\mathbb{N}\times\mathbb{N}\times\{0\})\cup(\mathbb{N}_0\times\mathbb{N}_0\times \{1\})$. Thus, $S\cap T = \mathbb{N}\times\mathbb{N}\times\{0\} \cong \mathbb{N}\times\mathbb{N}$. It is easy to show that $\mathbb{N}\times\mathbb{N}$ is not finitely generated.
\end{ex}

\begin{question}
\label{open:commHowson}
Which (finitely generated) commutative semigroups have the Howson property? For those commutative semigroups that are not Howson, is the FGIP decidable?
\end{question}

\section*{Acknowledgements} 
The first author is grateful to Ganna Kudryavtseva for her question (Question~\ref{qu:FGIPFG}) and helpful discussions at the Conference on Theoretical and Computational Algebra 2025 in \'{E}vora. The author would also like to thank Carl-Fredrik Nyberg-Brodda for his question (Question~\ref{qu:rank}) at the same conference.

\bibliographystyle{abbrv}
\bibliography{Ref}
	
\end{document}